\NeedsTeXFormat{LaTeX2e}
\documentclass[10pt,reqno]{amsart}
\usepackage{hyperref}
\usepackage{latexsym,amsmath}
\usepackage{enumerate}
\usepackage{amsfonts}
\usepackage{amssymb}
\usepackage{latexsym}
\usepackage{fixmath}
\usepackage[usenames,dvipsnames]{color}
\usepackage{mathdots}
\usepackage{nicematrix}

 
\usepackage{graphicx}
\usepackage{comment}

\usepackage[T1]{fontenc}
\usepackage{fourier}
\usepackage{bbm}
\usepackage{color}
\usepackage{fullpage}



\newcommand{\trans}{\top}

\numberwithin{equation}{section}

\newcommand{\set}[1]{\left\{#1\right\}}
\newcommand{\bDelta}{\bar\Delta}
\newcommand{\bGamma}{\bar\Gamma}
\newcommand{\vDelta}{\vec\Delta}
\newcommand{\vGamma}{\vec\Gamma}
\newcommand{\frozen}{\mathtt{f}}
\newcommand{\slush}{\mathtt{s}}
\newcommand{\unfrozen}{\mathtt{u}}

\renewcommand{\vec}[1]{\boldsymbol{#1}}

\renewcommand{\subset}{\subseteq}

\newcommand{\WP}{\mathrm{WP}}


\newcommand\vhy{\vec{y}^\dagger}
\newcommand\vhs{\vec\sigma^\dagger}
\newcommand\vhA{\vA^\dagger}
\newcommand\vha{\vec\alpha}

\newcommand\CPC{Combinatorics, Probability and Computing}

\newcommand\nix{\,\cdot\,}

\newcommand\cA{\mathcal A}
\newcommand\cB{\mathcal B}

\newcommand\cD{\mathcal D}
\newcommand\cE{\mathcal E}
\newcommand\cF{\mathcal F}
\newcommand\cG{\mathcal G}

\newcommand\cI{\mathcal I}

\newcommand\cL{\mathcal L}

\newcommand\cR{\mathcal R}

\newcommand\cX{\mathcal X}
\newcommand\cY{\mathcal Y}

\newcommand\fA{\mathfrak A}

\newcommand\fC{\mathfrak C}
\newcommand\fD{\mathfrak D}
\newcommand\fE{\mathfrak E}
\newcommand\fF{\mathfrak F}

\newcommand\fI{\mathfrak I}

\newcommand\fM{\mathfrak M}

\newcommand\fP{\mathfrak P}

\newcommand\fR{\mathfrak R}
\newcommand\fS{\mathfrak S}

\newcommand\fV{\mathfrak V}

\newcommand\fX{\mathfrak X}

\newcommand\fh{\mathfrak h}

\newcommand\fl{\mathfrak l}
\newcommand\fm{\mathfrak m}

\newcommand\fp{\mathfrak p}

\newcommand\vA{\vec A}

\newcommand\vF{\vec F}

\newcommand\vX{\vec X}
\newcommand\vY{\vec Y}
\newcommand\vZ{\vec Z}
\newcommand\va{\vec a}

\newcommand\ve{\vec e}

\newcommand\vj{\vec j}

\newcommand\vm{\vec m}

\newcommand\vt{\vec t}

\newcommand\vv{\vec v}

\newcommand\vy{\vec y}

\newcommand\eps{\varepsilon}

\newcommand\FF{\mathbb{F}}
\newcommand\field{\FF}
\newcommand\NN{\mathbb{N}}
\newcommand\ZZpos{\mathbb{Z}_{\geq0}}

\newcommand\Erw{\mathbb{E}}
\newcommand\ex{\Erw}
\newcommand{\vecone}{\mathbb{1}}

\newcommand{\Po}{{\rm Po}}
\newcommand{\Bin}{{\rm Bin}}

\newcommand{\Be}{{\rm Be}}

\newcommand\bc[1]{\left({#1}\right)}
\newcommand\cbc[1]{\left\{{#1}\right\}}

\newcommand\brk[1]{\left\lbrack{#1}\right\rbrack}

\newcommand\abs[1]{\left|{#1}\right|}

\newcommand{\Whp}{W.h.p.}
\newcommand{\whp}{w.h.p.}

\newcommand\pr{\mathbb{P}} 
 
\newcommand\Lem{Lemma}
\newcommand\Prop{Proposition}
\newcommand\Thm{Theorem}

\newcommand\Cor{Corollary}
\newcommand\Sec{Section}

\newtheorem{definition}{Definition}[section]
\newtheorem{claim}[definition]{Claim}

\newtheorem{theorem}[definition]{Theorem}
\newtheorem{lemma}[definition]{Lemma}
\newtheorem{proposition}[definition]{Proposition}
\newtheorem{corollary}[definition]{Corollary}

\newtheorem{fact}[definition]{Fact}

\DeclareMathOperator{\nul}{nul}
\DeclareMathOperator{\rank}{rk}

\newcommand{\rk}{\rank}

\newcommand{\supp}{{\mathrm{supp}}}


\def\B{{\mathcal B}}

\def\pr{{\mathbb P}}

\begin{document}
\title{The $k$-XORSAT threshold revisited}
\author{Amin Coja-Oghlan, Mihyun Kang, Lena Krieg, Maurice Rolvien}
\thanks{Amin Coja-Oghlan is supported by DFG CO 646/3 and DFG CO 646/5.  Mihyun Kang is supported by a Friedrich Wilhelm Bessel research award of the Alexander von Humboldt Foundation (AUT 1204138 BES)}
\address{Amin Coja-Oghlan, {\tt amin.coja-oghlan@tu-dortmund.de}, TU Dortmund, Faculty of Computer Science, 12 Otto-Hahn-St, Dortmund 44227, Germany.}
\address{Mihyun Kang, {\tt kang@math.tu-graz.at}, TU Graz, Institute of Discrete Mathematics, Steyrergasse 30, 8010 Graz, Austria.}
\address{Lena Krieg, {\tt lena.krieg@tu-dortmund.de}, TU Dortmund, Faculty of Computer Science, 12 Otto-Hahn-St, Dortmund 44227, Germany.}
\address{Maurice Rolvien, {\tt maurice.rolvien@tu-dortmund.de}, TU Dortmund, Faculty of Computer Science, 12 Otto-Hahn-St, Dortmund 44227, Germany.}

\maketitle

\begin{abstract}
We provide a simplified proof of the random $k$-XORSAT satisfiability threshold theorem.
As an extension we also determine the full rank threshold for sparse random matrices over finite fields with precisely $k$ non-zero entries per row.
This result is an extension of a result from [Ayre, Coja-Oghlan, Gao, M\"uller: Combinatorica 2020].
The proof combines physics-inspired message passing arguments with a surgical moment computation.
\hfill MSc: 60B20, 15B52
\end{abstract}

\section{Introduction}\label{Sec_intro}

\noindent
The random 3-XORSAT problem was one of the first random constraint satisfaction problems whose satisfiability threshold could be pinpointed precisely.
A random 3-XORSAT instance consists of a conjunction of XOR-clauses, rather than OR-clauses as in the common $k$-SAT problem.
The goal is to find the maximum number of random XOR-clauses such that the formula remains satisfiable with high probability.
The seminal article of Dubois and Mandler~\cite{DuboisMandler} that first solved this problem introduced an influential technique, namely the second moment method applied to a pruned problem instance.
In their very last sentence Dubois and Mandler asserted that their proof extends to $k$-XORSAT for any $k\geq3$.
However, because of the analytic difficulties associated with estimating the second moment for $k>3$, this generalisation turned out to be far from straightforward.
The first complete proof, covering over 30 pages and involving an (avoidable) bit of computer assistance, was published by Pittel and Sorkin~\cite{PittelSorkin} more than a decade later.
Subsequently a different but still fairly complicated proof that relies on coupling arguments rather than moment calculations was suggested by Ayre, Coja-Oghlan, Gao and M\"uller~\cite{Ayre}.
That result covers not only $k$-XORSAT but also an extension to random matrices over finite fields.

The present contribution develops a relatively short, self-contained derivation of the $k$-XORSAT threshold as well as said extensions to random matrices via a novel approach that differs significantly from both~\cite{Ayre,PittelSorkin}.
The new proof is partly inspired by statistical physics ideas and by recent work on a vaguely related random matrix problem~\cite{parity,MM}.
To elaborate, we first derive a quantitative characterisation of a typical solution to a random $k$-XORSAT formula by means of what physicists would call a `quenched' argument.
The quenched argument employs Warning Propagation (`WP'), a physics-inspired message passing technique.
Then we follow up with a surgical moment computation confined to scenarios that match the precise characteristics predicted by WP.
In physics jargon this second bit amounts to an `annealed' computation.
Usually annealed estimates fail to be tight due to large deviations effects.
They also tend to be painfully intricate.
But because the present specimen carefully homes in on solutions with the correct `quenched' properties, the calculations are tight as well as elegant.

Let $\vF=\vF_k(n,m)$ be a random $k$-XORSAT instance with $n$ Boolean variables and $m$ random XOR-clauses of length $k$.
To be precise, the clauses are drawn independently and uniformly from the set of all possible $2^k\binom nk$ XOR-clauses on the variable set $x_1,\ldots,x_n$.
The following theorem, first established in~\cite{DuboisMandler} for $k=3$ and in~\cite{PittelSorkin} for $k>3$, provides the $k$-XORSAT threshold.

\begin{theorem}\label{thm_xor}
	For $k\geq3$ and $d>0$ let
	\begin{align}\label{eqPhi}
		\Phi_{d,k}(\alpha)&=\exp\bc{-d \alpha^{k-1}}+d\alpha^{k-1}-\frac{d(k-1)}{k}\alpha^k-\frac dk&\mbox{and}&&d_k&=\sup\cbc{d>0:\max_{\alpha\in[0,1]}\Phi_{d,k}(\alpha)=1-d/k}.
	\end{align}
	For any $\eps>0$ \whp\ the random $k$-XORSAT formula $\vF$ is
	\begin{enumerate}[(i)]
		\item satisfiable if $m\leq(1-\eps)d_k n/k$,
		\item unsatisfiable if $m\geq(1+\eps)d_k n/k$.
	\end{enumerate}
\end{theorem}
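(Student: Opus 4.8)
The overall strategy is to reduce both statements to the analysis of a pruned instance — the 2-core of the hypergraph of clauses — and to the behaviour of the function $\Phi_{d,k}$. First I would recall that satisfiability of a linear system over $\FF_2$ is governed purely by the rank of the coefficient matrix $\vA(\vF)$: the formula $\vF$ is satisfiable if and only if $\vA(\vF)$ together with the right-hand side is consistent, and since the right-hand side is uniformly random and independent, $\vF$ is satisfiable with probability exactly $2^{\nul\vA(\vF)-(m-\rank\vA(\vF))}$ averaged appropriately; more usefully, \whp\ satisfiability is equivalent to $\rank\vA(\vF)=m$, i.e.\ the rows being linearly independent, up to a bounded-probability correction that one controls via a small second-moment or via the explicit formula for $\Pr[\vF\text{ satisfiable}]$. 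So the task becomes: determine for which densities $d=km/n$ the $m\times n$ random matrix with exactly $k$ ones per row has full row rank \whp.

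The core technical input is the Warning Propagation / quenched characterisation that (I assume) has been set up earlier in the paper: \whp\ the proportion of variables that are ``frozen'' (forced to a fixed value in a uniformly random solution, equivalently lying outside the support of the kernel of $\vA$) is described by the largest fixed point $\alpha^\star=\alpha^\star(d,k)$ of the WP equation $\alpha = 1-\exp(-d\alpha^{k-1})$, and the expected free entropy / normalised kernel dimension converges to $\max_{\alpha\in[0,1]}\Phi_{d,k}(\alpha) - (1-d/k)$. Concretely, one shows $\frac1n\Erw[\nul\vA(\vF)] \to \max_\alpha\Phi_{d,k}(\alpha)-(1-d/k)$ — the $\Phi_{d,k}$ term being the ``entropy'' contribution and $1-d/k = (n-m)/n$ the trivial dimension count — via the quenched WP argument for the lower bound on the kernel and the surgical annealed moment computation, restricted to configurations consistent with the WP-predicted frozen fraction, for the matching upper bound. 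The point of the surgical restriction is exactly that an unrestricted first-moment bound would overcount by large-deviations contributions from atypical frozen fractions, whereas conditioning on the WP profile makes the first and second moments match.

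Granting that convergence, the threshold statement follows by a short analytic argument about $\Phi_{d,k}$. One checks that $d\mapsto \max_{\alpha}\Phi_{d,k}(\alpha)-(1-d/k)$ is continuous and (weakly) decreasing, that it equals $0$ for small $d$ (where $\alpha=0$ is the only fixed point, $\Phi_{d,k}(0)=1-d/k$, and the maximum is attained there), and that it becomes strictly negative once $d>d_k$, with $d_k$ defined precisely as the largest $d$ at which the maximum still equals $1-d/k$. For $d<d_k$ one gets $\frac1n\Erw[\nul\vA(\vF)]\to$ (something $\ge 0$ that forces, together with concentration, $\rank\vA(\vF)=m$ \whp, hence part~(i)); for $d>d_k$ one gets $\rank\vA(\vF)<m$ \whp\ — in fact $\nul$ of the augmented matrix exceeds what consistency allows with probability bounded away from $0$, and a further pruning/2-core argument upgrades this to \whp, giving part~(ii). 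Translating $d=km/n$ into $m \lessgtr (1\pm\eps)d_k n/k$ and using monotonicity in $m$ closes the argument.

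The main obstacle is the matching upper bound on the kernel dimension, i.e.\ the surgical moment computation: one has to (a) identify the correct ``type'' of a solution — essentially the frozen set together with local statistics — so that the first moment over that restricted class is tight, (b) carry out the resulting large, multi-parameter Laplace/saddle-point evaluation and verify that the exponent is exactly $\Phi_{d,k}$ at its maximiser, and (c) control the second moment on the restricted class to show concentration, which is where the earlier papers~\cite{PittelSorkin,Ayre} ran into serious difficulty for $k>3$. Everything else — the rank/consistency reduction, the analysis of $\Phi_{d,k}$, and the monotonicity in $m$ — is comparatively routine.
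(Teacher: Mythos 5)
Your high-level outline (rank reduction, WP quenched characterisation, surgical first moment restricted to WP-consistent configurations) does match the paper's skeleton, but three substantive points go wrong. First, you claim the frozen fraction is the largest WP fixed point $\alpha_\frozen(d,k)$. That is false for $d<d_k$: the paper combines Proposition~\ref{prop_annealedwp} with the trivial bound $\nul\vhA\geq n-m+o(n)$ to conclude $\vha=o(1)$ (Section~\ref{sec_cor_annealedwp}), and this is precisely what makes the kernel vectors balanced and closes the argument; if $\vha$ were close to $\alpha_\frozen>0$ the proof would not work. Second, your claim ``$\tfrac1n\ex[\nul\vA]\to\max_\alpha\Phi_{d,k}(\alpha)-(1-d/k)$, which becomes strictly negative once $d>d_k$'' cannot be right: the nullity is always at least $n-m$, and $\Phi_{d,k}(0)=1-d/k$, so the quantity is always nonnegative and in fact becomes strictly \emph{positive} past $d_k$. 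Third, step (c) misdiagnoses the difficulty: the paper never performs a ``second moment over the restricted class.'' Having established that the kernel is essentially all balanced (quenched), the Chebyshev step in the proof of Theorem~\ref{thm_main}~(i) reduces to the one-line identity $\ex[\vZ^2\vecone\cB\mid\vt]=q^{n-m-\vt}\ex[|\ker\vhA|\vecone\cB\mid\vt]$ and the elementary Lemma~\ref{lem_annealed_trivial}. The delicate large-deviations trade-offs you worry about are exactly what the quenched restriction eliminates; they do not reappear in disguise as a second-moment problem.

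Your route to part~(ii) is also not the paper's. You propose a pruning/2-core argument, which the authors explicitly flag in the introduction as an available alternative they deliberately avoid. Section~\ref{sec_finish} instead uses the interpolation method: one constructs a family $\vhA(\theta)$, shows $\frac1n\frac{\partial}{\partial\theta}\ex[\nul\vhA(\theta)]\leq-d\alpha_\frozen^{k-1}+\frac dk(k-1)\alpha_\frozen^k+\frac dk+o(1)$, integrates over $\theta\in[0,1]$, and deduces $\frac1n\ex[\nul\vhA]\geq\Phi_{d,k}(\alpha_\frozen)+o(1)>1-d/k$ for $d>d_k$, finishing with Azuma--Hoeffding. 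This keeps the entire derivation free of the 2-core peeling analysis, which is one of the stated selling points of the paper.
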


In a nutshell, the $k$-XORSAT satisfiability threshold equals $d_k/k$.
The threshold admits an explicit combinatorial interpretation, an observation that was a vital to the original derivations~\cite{DuboisMandler,PittelSorkin}.
To elaborate, we rephrase the $k$-XORSAT formula $\vF$ as a linear system over $\field_2$ as follows.
Set up a random $m\times n$-matrix $\vA$ whose $i$-th row has one-entries in precisely the $k$ columns $j$ such that variable $x_j$ appears in the $i$-th clause of $\vF$.
Thus, each row of $\vA$ represents a clause.
Further, define $\vy_i=1$ iff $k$ plus the number of negations in the $i$-th clause is odd.
Then every solution $\sigma\in\field_2^n$ to the linear system $\vA\sigma=\vy$ renders a XOR-satisfying assignment of $\vF$, and vice versa.
Because the signs of the literals are independent of the identities of the underlying variables, the vector $\vy$ is independent of $\vA$.
Therefore, the random XOR-formula $\vF$ is satisfiable \whp\ iff $\vA$ has full row rank $m$ \whp

Now consider the following process that prunes $\vA$ down to a minor $\vA^{(2)}$:
\begin{quote}
	while there exists a column with at most a single non-zero entry, remove that column along with the row where its non-zero entry appears (if there is one).
\end{quote}
This is just the random hypergraph 2-core peeling process phrased in terms of the matrix $\vA$.
Therefore, it is possible (albeit non-trivial) to track the pruning process so as to determine the likely size of $\vA^{(2)}$~\cite{Mike}.
This analysis evinces that $d_k n/k$ marks the threshold beyond which $\vA^{(2)}$ has more rows than columns \whp\
In effect, for $m\geq(1+\eps)d_k n/k$ the minor $\vA^{(2)}$ cannot have full row rank anymore, nor can the original matrix $\vA$.
Consequently, $\vF$ is unsatisfiable for $m\geq(1+\eps)d_k n/k$.

Although for $m\leq(1-\eps)d_k n/k$ the minor $\vA^{(2)}$ has fewer rows than columns, it is by no means a foregone conclusion that $\vA^{(2)}$ also has full row rank \whp\
Indeed, in~\cite{DuboisMandler,PittelSorkin} the main technical difficulty lies in demonstrating this fact via the second moment method.
The necessary calculations turn out to be delicate because they operate with the outcome $\vA^{(2)}$ of the pruning process, a matrix whose rows are stochastically dependent.
The second moment therefore involves subtle large deviations trade-offs.
Luckily, the proof strategy that we propose here requires neither an explicit analysis of the pruning process, nor complicated large deviations arguments.

\Thm~\ref{thm_xor} admits a natural generalisation to matrices over finite fields beyond $\field_2$.
Let $q\geq2$ be a prime power and let $\fA=(\fA_{ij})_{i,j\geq1}$ be an infinite matrix with entries $\fA_{ij}\in\field_q\setminus\{0\}$.
Further, given integers $m,n>0$ and $k\geq3$ let $(\vec e_i)_{i\geq1}$ be a family of independent uniformly random subsets of $[n]$ of size $|e_i|=k$ and define a random $m\times n$-matrix $\vA=\vA(k,m,n,q,\fA)$ over $\field_q$ by letting
\begin{align}\label{eqA}
		\vA_{ij}=\fA_{ij}\vecone\{j\in\vec e_i\}&&(i\in[m],\,j\in[n]).
	\end{align}
Thus, $\vA$ has precisely $k$ non-zero entries per row.
The positions of the non-zero entries are determined by the $\ve_i$, while the entries themselves are copied from $\fA$.
Naturally, in the case $q=2$ we simply obtain the matrix induced by the $k$-XORSAT formula $\vF$.
Therefore, the following theorem encompasses \Thm~\ref{thm_xor} as a special case.

\begin{theorem}\label{thm_main}
	For any $k\geq3$, any prime power $q\geq2$ and any infinite matrix $\fA$ composed of non-zero elements of $\field_q$ the following is true.
	Let $d_k$ be the threshold from \eqref{eqPhi}.
	Then for any $\eps>0$,
	\begin{enumerate}[(i)]
		\item if $m\leq(1-\eps)d_k n/k$, then $\vA$ has full row rank \whp
		\item if $m\geq(1+\eps)d_k n/k$, then $\vA$ fails to have full row rank \whp
	\end{enumerate}
\end{theorem}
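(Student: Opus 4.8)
The plan is to reduce both parts to a sharp understanding of $\rank(\vA)$, noting that $\rank(\vA)=m$ is equivalent to $\nul(\vA)=n-m$. Part~(ii) is the easy direction and is essentially in the literature. The peeling process of \Sec~\ref{Sec_intro} is the $2$-core process of the $k$-uniform hypergraph encoded by $\vA$, and each peeling step (delete an all-zero column, or delete a weight-one column together with the row carrying its unique nonzero entry) leaves full-row-rank-ness unchanged and drops $\rank$ by $0$ or $1$ in lockstep with the row count; hence $\vA$ has full row rank iff the $2$-core matrix $\vA^{(2)}$ does. Tracking the $2$-core process~\cite{Mike} gives the likely numbers of surviving rows and columns as explicit functions of $d$, and comparing them shows that for $m\geq(1+\eps)d_kn/k$ the minor $\vA^{(2)}$ has strictly more rows than columns \whp\ — this is exactly the combinatorial content of the threshold $d_k$ of \eqref{eqPhi} — so $\rank(\vA^{(2)})$ is below its row count and $\vA$ is not of full row rank. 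None of this is sensitive to $q$ or to the weight pattern $\fA$.

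For part~(i) the first, quenched, step runs Warning Propagation on the Tanner graph of $\vA$. Each directed edge carries a binary message distinguishing \emph{frozen} incident variables (those taking a common value across all solutions of the linear system) from \emph{unfrozen} ones; from the appropriate initialisation these messages converge to a fixed point whose law along the local weak (Galton--Watson) limit obeys the recursive distributional equation underlying $\Phi_{d,k}$. Two facts must be harvested: (a) that this WP fixed point genuinely captures the freezing structure of $\vA$ — the message labels really do determine which coordinates are pinned, a statement that exploits the linear-algebraic softness of XORSAT; and (b) the precise empirical statistics of a typical solution, namely the density of frozen variables and the joint law of a variable's frozen value together with its local neighbourhood. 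For $d<d_k$ the maximiser of $\Phi_{d,k}$ over $[0,1]$ is $\alpha=0$, which translates into the assertion that this quenched profile is essentially unique and carries no entropy beyond the naive $n-m$ degrees of freedom.

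The second, annealed, step is a surgical moment computation. Let $Z(\vA,y)=|\{x\in\field_q^n:\vA x=y\}|$ for $y\in\field_q^m$ drawn uniformly at random and independently of $\vA$; then $\Pr[Z(\vA,y)>0]=\Erw[q^{\rank(\vA)-m}]$, and since $q^{\rank(\vA)-m}$ is a power of $q$ bounded by $1$, it suffices to prove $\Pr[Z(\vA,y)>0]=1-o(1)$. The vanilla second moment $\Erw[Z^2]$ is not tight — this is precisely the large-deviations obstruction that pushed~\cite{DuboisMandler,PittelSorkin} onto the $2$-core. Instead we bound the restricted count $Z'$ of solutions $x$ whose frozen coordinates take the WP-predicted values and whose local profile matches the quenched statistics. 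A standard planted-model change-of-measure argument, using that WP is \whp\ correct even in the planted model, gives $\Erw[Z']=(1-o(1))\Erw[Z]$. In $\Erw[(Z')^2]=\sum_{x,x'}\Pr[\vA x=\vA x'=y,\ x,x'\ \text{WP-consistent}]$, the WP-consistency of $x$ and $x'$ forces them to agree on the frozen set (their free parts being slaved to it through the peeling), which pins the overlap $n^{-1}|\{j:x_j=x'_j\}|$ near its typical value and so deletes exactly the spurious interior stationary point of the overlap profile that wrecks the naive estimate. A Laplace evaluation — using $d<d_k$, equivalently that the profile governed by $\Phi_{d,k}$ attains its maximum only at the typical overlap, with value $\Erw[Z']^2$ — then yields $\Erw[(Z')^2]\leq(1+o(1))\Erw[Z']^2$ and hence $\Pr[Z'>0]\geq\Erw[Z']^2/\Erw[(Z')^2]=1-o(1)$. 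Because the exponential order of each term is governed by the binary frozen/unfrozen statistics rather than by the actual nonzero field elements, nothing here depends on $q$ or $\fA$: the threshold is the same $d_k$ for all prime powers and all weight patterns $\fA$, which is exactly the assertion of the theorem.

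I expect the main difficulty to sit at the seam between the two steps: making the WP description of the typical solution sharp enough — not merely its cardinality but the joint local law of frozen values and neighbourhoods — that the restricted second moment is simultaneously not much smaller in expectation than the unrestricted one and genuinely tight, i.e.\ dominated by the diagonal / typical-overlap term. Concretely this amounts to analysing $\alpha\mapsto\Phi_{d,k}(\alpha)$ and the associated overlap functional with enough precision to certify that, for every $d<d_k$, the maximum of the relevant variational problem is witnessed only by WP-consistent pairs of solutions, which is the linchpin of the whole argument. The remaining ingredients — concentration of $\nul(\vA)$ (it changes by at most one when a row is resampled) and the moment bookkeeping — are laborious but routine.
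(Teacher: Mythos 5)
Your proposal shares the paper's two-step philosophy (a quenched WP characterisation of frozen coordinates followed by a surgically restricted moment computation), but the execution diverges in ways that matter, and one of those divergences reopens exactly the difficulty the paper is engineered to close.

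For part~(ii) you propose the classical $2$-core peeling route. This works and is $q$- and $\fA$-insensitive for the reason you give, but it requires importing the full analysis of the peeling process from~\cite{Mike} (or~\cite{Cooper,JansonLuczak,Riordan}, etc.). The paper deliberately avoids that machinery: \Sec~\ref{sec_finish} instead runs a self-contained interpolation argument on the expected nullity, replacing the $dn/k$ genuine $k$-ary rows by $d\alpha_\frozen^{k-1}n$ unary rows as $\theta$ runs from $0$ to $1$, and reading off the lower bound $\frac1n\ex[\nul\vhA]\geq\Phi_{d,k}(\alpha_\frozen)+o(1)>1-d/k$. Both routes reach the conclusion; the paper's buys self-containment at the cost of the interpolation bookkeeping, yours buys brevity at the cost of invoking the peeling literature.

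For part~(i) there are two concrete gaps. First, you nowhere introduce the \emph{pinning} step (\Lem~\ref{lem_pinning}, \Cor~\ref{cor_vhA}): the paper works with $\vhA=\vA[\vt]$ rather than $\vA$ precisely because the static WP messages \eqref{eqWPdef} need not satisfy the fixed-point equations \eqref{eqWPfixedEx} on a raw sparse random matrix. The scarcity of short proper relations provided by pinning is what drives the deletion arguments behind Claims~\ref{claim_mhatm1}--\ref{claim_mhatF2}, and hence \Prop~\ref{prop_vhA}. Without it, the assertion ``these messages converge to a fixed point whose law \dots obeys the recursive distributional equation'' is unsupported, and the whole quenched picture of freezing has no foothold. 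Second, and more fundamentally, your annealed step bounds $\Erw[(Z')^2]$ by the overlap-pair sum $\sum_{x,x'}\Pr[\vA x=\vA x'=y,\ x,x'\mbox{ WP-consistent}]$ and then appeals to a ``Laplace evaluation'' over overlap profiles. That is precisely the large-deviations computation that made~\cite{DuboisMandler,PittelSorkin} long and delicate, and which this paper is designed to avoid. WP-consistency pins $x$ and $x'$ to zero on the frozen set, but does not pin their overlap on the unfrozen set; you would still need to rule out a spurious interior maximiser of the overlap functional for every $d<d_k$, and you leave that to an asserted unimodality of ``the profile governed by $\Phi_{d,k}$'', which is a one-parameter function of the frozen fraction $\alpha$, not of an overlap profile. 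The paper sidesteps overlaps entirely: since the solution set of $\vhA\sigma=\vy^\dagger$ is a coset of $\ker\vhA$, one has the identity $\vZ^2=\vZ\,|\ker\vhA|$, so it suffices to bound $\ex[|\ker\vhA|\cdot\vecone\cB\mid\vt]$ (\Lem~\ref{lem_annealed_trivial}). That in turn follows once a random kernel vector is shown to be balanced (\Cor~\ref{cor_annealedwp}), which requires $\vha=o(1)$ — and \emph{that} is obtained not from the quenched step alone (as your outline suggests) but from the separate first-moment bound $\ex[\vX_\alpha\mid\fD]\leq q^{n\Phi_{d,k}(\alpha)+o(n)}$ on the number of $\alpha$-WP fixed points together with their extensions (\Prop~\ref{prop_annealedwp}, via the $\alpha$-cover count of \Lem~\ref{lemma_cover} and the conditional satisfaction probability of \Lem~\ref{lem_extension}). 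In short: your outline correctly identifies the seam as the hard part, but proposes to cross it with the classical overlap second moment rather than with the pinning plus $\vZ^2=\vZ|\ker\vhA|$ plus first-moment-on-WP-fixed-points combination that the paper actually uses; without filling in that overlap analysis, the argument as written does not close.
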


\Thm~\ref{thm_main} complements~\cite[\Thm~1.1]{Ayre}, where only random matrices with identically distributed rows were considered.
By contrast, in \Thm~\ref{thm_main} the matrix $\fA$ may proscribe different non-zero entries for each row.
That said, in hindsight the theorem shows that the full rank threshold is independent of both $q$ and $\fA$.
We proceed to outline the strategy upon which the proof of \Thm~\ref{thm_main} is based.

\section{Proof strategy}\label{sec_outline}

\noindent
The main difficulty lies in proving the positive statement \Thm~\ref{thm_main}(i).
Suppose we could argue that for $m<(1-\eps)d_k n/k$ \whp\ a random vector $\vec\sigma\in\ker\vA$ is approximately `balanced' in the sense that every value $s\in\FF_q$ appears in $\vec\sigma$ about $n/q$ times. 
Since a straightforward moment calculation shows that the expected number of balanced $\sigma\in\ker\vA$ equals $(1+o(1))q^{n-m}$, we could then conclude that $|\ker\vA|=(1+o(1))q^{n-m}$ \whp, and thus that $\vA$ has full row rank \whp

However, we will not be able to prove directly that a random $\vec\sigma\in\ker\vA$ is balanced \whp\
Instead we will work with a matrix $\vhA$ obtained from $\vA$ by a small but consequential perturbation called `pinning'.
The matrix $\vhA$ contains $\vA$ as its top $m\times n$-minor, but $\vhA$ has $O(\log n)$ additional rows.
Pinning guarantees that $\vhA$ has only relatively few `short linear relations', a property that will pave the way for us to bring the Warning Propagation (`WP') message passing scheme to bear. 
Ultimately we will argue that random $\vhs\in\ker\vhA$ are balanced \whp\
As outlined in the previous paragraph, this will imply that $\vhA$ has full row rank \whp, whence the same is true of $\vA$.

The purpose of WP is to show that the vectors in the kernel of $\vhA$ have a peculiar structure.
Specifically, there are certain coordinates $j\in[n]$ that are `frozen' in $\vhA$, meaning that $\sigma_j=0$ for all $\sigma\in\ker\vhA$.
By contrast, the values assigned to the unfrozen coordinates are essentially balanced.
Hence, if $\vha n$ variables are frozen, then in a random $\vhs\in\ker\vhA$ each non-zero value $s\in\FF_q\setminus\{0\}$ appears about $(1-\vha)n/q$ times.
Ultimately we will argue that $\vha=o(1)$ \whp, which implies that $\vhs$ is balanced \whp\

But the proof that $\vha=o(1)$ \whp\ requires a few more steps.
First, from WP we learn that the probability that $j\in[n]$ is frozen depends on the number of non-zero entries in the $j$-th column of $\vhA$.
In fact, WP renders detailed `local' information about the distribution of the frozen coordinates.
In the quenched part of the analysis, we will extract this information carefully to obtain a quantitative picture of the structure of the kernel vectors in terms of the as yet unknown value of $\vha$.
Moreover, we will see that the messages exchanged by WP satisfy a certain fixed point property.

Subsequently, we will develop an `annealed' (moment computation) argument that allows us to bound the number of WP fixed points associated with any conceivable value of $\vha$.
Moreover, we will compute the expected number of vectors $\sigma\in\ker\vhA$ that are consistent with a given WP fixed point.
This calculation will reveal that \whp\ for $m<(1-\eps)d_kn/k$ no WP fixed point with $\Omega(n)$ frozen coordinates gives rise to $q^{n-m-o(n)}$ kernel vectors, the number of vectors that we know the kernel of $\vhA$ must contain because its rank and its nullity sum to $n$.
Hence, we deduce that $\vha=o(1)$ \whp, as desired.

In the rest of this section we discuss in more detail the proof of \Thm~\ref{thm_main}(i).
We begin with the pinning operation in \Sec~\ref{sec_pin}, then discuss WP and the quenched and annealed analyses.
The proof of the second assertion \Thm~\ref{thm_main} (ii) is but an afterthought.
Indeed, as mentioned in \Sec~\ref{Sec_intro} this second assertion could be derived from known results about the size of the minor $\vA^{(2)}$.
Nonetheless, \Sec~\ref{sec_finish} contains a self-contained proof based on the interpolation method that avoids the analysis of the pruning process.

\subsection{Pinning}\label{sec_pin}
Adding a few rows to a matrix, the randomised pinning operation mostly removes `short linear relations'.
The operation, devised in this form in~\cite{Maurice}, actually works on any matrix, not just on the random matrix $\vA$.
Hence, let $A$ be any $\FF_q$-matrix of size $M\times N$.
For an integer $t\geq0$ obtain $A[t]$ from $A$ by adding $t$ new rows that each contain a single non-zero entry, namely a one in a random position chosen independently and uniformly from the $N$ columns.

The purpose of this operation is to diminish the number of short relations.
To be precise, following~\cite{Maurice} we call a set $J\subset[N]$ of columns a {\em relation of $A$} if there exists a vector $y\in\field_q^M$ such that
	$$\supp(y^\trans A)=\cbc{j\in[N]:(y^\trans A)_j\neq0}$$ 
is a non-empty subset of $J$.
In other words, the non-zero entries of the linear combination $y^\trans A\neq0$ of the rows of $A$ are confined to $J$.
Further, call $j\in[N]$ {\em frozen in $A$} if the singleton $\{j\}$ is a relation of $A$.
Thus, $j$ is frozen iff $\sigma_j=0$ for every $\sigma\in\ker A$.
Let $\cF(A)$ be the set of all frozen $j\in[N]$.

In addition, call $J\neq\emptyset$ a {\em proper relation} of $A$ if $J\setminus\cF(A)$ is a relation of $A$.
Finally, we say that $A$ is {\em $(\delta,\ell)$-free} if $A$ possesses fewer than $\delta\binom{N}{h}$ proper relations $I$ of size $|I|=h$ for any $2\leq h\leq\ell$.
This definition is meant to express that $A$ contains few relations of size $\ell$ that are not `just' composed of frozen $j\in[N]$.%
	\footnote{\Lem~\ref{lem_pinning} is the only statement beyond textbook knowledge that we apply without a proof in order to derive \Thm~\ref{thm_main}. The proof, which relies on a potential function argument and a bit of linear algebra, is neither long nor difficult.}

\begin{lemma}[{\cite[\Prop~2.4]{Maurice}}]\label{lem_pinning}
	For any $\delta>0,\ell>0$ there exists $T_0=O(\ell^3/\delta^4)>0$ such that for any $T\geq T_0$ and any matrix $A$ for a random $\vec t\in[T]$ we have $\pr\brk{A[\vt]\mbox{ is $(\delta,\ell)$-free}}>1-\delta$.
\end{lemma}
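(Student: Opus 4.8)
The plan is to establish Lemma~\ref{lem_pinning} via a potential function argument built around the expected number of frozen coordinates after pinning. Set $\Phi(A)=|\cF(A)|/N\in[0,1]$ for an $M\times N$-matrix $A$. The key observation is that adding a pinning row can only enlarge the frozen set, so $\Phi$ is monotone increasing along the pinning process; since $\Phi\leq1$, the sequence $\Erw[\Phi(A[t])]$, $t=0,1,2,\ldots$, is bounded and increasing, hence it is ``almost stationary'' for most $t$. Concretely, by a telescoping argument there is some $t^\star\in\{0,\ldots,T-1\}$ with $\Erw[\Phi(A[t^\star+1])]-\Erw[\Phi(A[t^\star])]$ small, of order $1/T$. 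I would then show that this near-stationarity of the frozen-fraction forces $A[t^\star]$ to be $(\delta,\ell)$-free with high probability over the choice of pinning positions, and then upgrade ``some good $t^\star$'' to ``a random $\vec t\in[T]$ is good with probability $>1-\delta$'' by a second averaging/Markov step, choosing $T_0$ large enough in terms of $\delta$ and $\ell$.

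First I would make precise the link between proper relations and freezing under one extra pinning row. Suppose $A$ has many proper relations of some size $h$ with $2\le h\le\ell$, say at least $\delta\binom Nh$ of them. A proper relation $I$ has the property that $I\setminus\cF(A)$ is itself a (nonempty) relation; pick a witness $y$ with $\emptyset\ne\supp(y^\trans A)\subseteq I\setminus\cF(A)$. If the next pinning row plants its single $1$ in a column $j\in\supp(y^\trans A)$, then combining that pinning row with $y$ kills coordinate $j$, i.e.\ $j$ becomes frozen in $A[t+1]$ although it was not frozen in $A[t]$. The counting step is that if there are $\ge\delta\binom Nh$ proper relations of size $h$, then the union of their supports (restricted to non-frozen coordinates) must be a constant fraction of $[N]$ — otherwise too few columns could not support that many distinct small relations, by a Sauer–Shelah / simple counting estimate with $h\le\ell$. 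Hence a uniformly random pinning column lands in some such support with probability $\Omega_{\delta,\ell}(1)$, which would make $\Erw[\Phi(A[t+1])]-\Erw[\Phi(A[t])]\ge c(\delta,\ell)>0$. Contraposing: if the increment is smaller than $c(\delta,\ell)$, then $A[t]$ has fewer than $\delta\binom Nh$ proper relations for every $h\le\ell$, i.e.\ it is $(\delta,\ell)$-free.

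Putting the pieces together: choose $T_0=O(\ell^3/\delta^4)$ so that $1/T_0$ is small compared with $\delta\cdot c(\delta,\ell)$. By monotonicity and boundedness of $\Erw[\Phi(A[\cdot])]$, at least a $(1-\delta)$-fraction of indices $t\in[T]$ satisfy $\Erw[\Phi(A[t+1])]-\Erw[\Phi(A[t])]<c(\delta,\ell)$, and for each such $t$ a Markov inequality on the (nonnegative) conditional increment shows that, with probability $>1-\delta$ over the internal randomness, the realised matrix $A[t]$ admits no new frozen coordinate from the next pinning row, hence is $(\delta,\ell)$-free. Averaging over a uniformly random $\vec t\in[T]$ and adjusting constants yields $\pr[A[\vec t]\text{ is }(\delta,\ell)\text{-free}]>1-\delta$, which is the claim.

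The main obstacle I anticipate is the counting/geometry step: controlling, as a function of $\delta$ and $\ell$ only, how many non-frozen columns must be ``touched'' by a family of $\ge\delta\binom Nh$ distinct proper relations of size $h$, uniformly in $N$ and in the matrix $A$. One has to handle the interaction across different sizes $h\le\ell$ and the fact that relations are defined via linear combinations over $\field_q$ rather than combinatorially, so the right statement is that the supports of the witnesses $y^\trans A$ cover a $\Theta_{\delta,\ell}(1)$-fraction of $[N]\setminus\cF(A)$; getting the polynomial dependence $T_0=O(\ell^3/\delta^4)$ (rather than something worse) requires being slightly careful in this estimate. Everything else — monotonicity of $\Phi$, the telescoping/pigeonhole over $t$, and the two Markov averagings — is routine.
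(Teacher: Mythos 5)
The paper itself does not prove this lemma; it cites it from \cite{Maurice} (the footnote notes it is the one non-textbook result taken for granted, and only hints that it uses a ``potential function argument and a bit of linear algebra''). So I am assessing your argument on its own terms.

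Your general framework (monotone bounded potential, telescoping to find a small expected increment, contraposition, Markov averaging) is the right shape, but the specific potential you chose, $\Phi(A)=|\cF(A)|/N$, together with your key claim that ``many proper relations of size $\le\ell$ forces a large one-step increment in $\Erw[\Phi]$'', is \emph{false}. First a local remark: the statement ``if the next pinning row hits $j\in\supp(y^\trans A)$, then combining it with $y$ kills $j$'' is vacuous --- pinning $j$ freezes $j$ by itself, and $y$ adds nothing. What you would actually need is that pinning $j$ freezes \emph{other} coordinates. A short computation shows that for an unfrozen $j$, the coordinates newly frozen by pinning $j$ are exactly $j$ itself together with those $j'$ such that $\{j,j'\}$ is a proper relation of size two. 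Hence
\begin{align*}
  \Erw\brk{\Phi(A[1])}-\Phi(A)\ =\ \frac{|[N]\setminus\cF(A)|}{N^2}+\frac{2R_2(A)}{N^2},
\end{align*}
where $R_2(A)$ is the number of size-$2$ proper relations. Proper relations of size $\ge3$ do not enter this one-step increment at all.

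Here is a concrete counterexample. Take $q\ge N$ and a matrix $A$ with $\ker A=\operatorname{span}\{v_1,v_2\}$, where $v_1=\vecone$ and $v_2=(a_1,\ldots,a_N)$ with the $a_j$ distinct and non-zero. Then $\cF(A)=\emptyset$, there are \emph{no} proper relations of size $2$ (for any $j\neq j'$ the projection of $\ker A$ onto $\{j,j'\}$ is all of $\FF_q^2$), yet \emph{every} $3$-subset $I$ satisfies $\dim\pi_I(\ker A)\le2<3$, so all $\binom N3$ size-$3$ sets are proper relations, and $A$ is very far from $(\delta,3)$-free. Nevertheless $\Erw[\Phi(A[1])]-\Phi(A)=1/N=o(1)$. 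So a matrix can fail $(\delta,\ell)$-freeness badly while having negligible one-step increment of your potential, which breaks the contrapositive ``small increment $\Rightarrow$ $(\delta,\ell)$-free.'' (In this example the lemma itself still holds --- after two pins the kernel becomes trivial and $A[t]$ is vacuously free for all $t\ge2$ --- so the statement is fine, only your argument is not.)

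The root of the problem is that $\Phi$ ignores all higher-order dependencies and sees only the size-$2$ relations that are about to collapse; relations of size $h\ge3$ take $h-1$ pins to produce a visible jump. A correct potential must ``look ahead'' in some way --- for example, one can work with a quantity of the form $\Erw_{j_1,\ldots,j_{\ell-1}}\bigl[|\cF(A[j_1,\ldots,j_{\ell-1}])|\bigr]/N$ (expected frozen fraction after $\ell-1$ further random pins), or an entropy/mutual-information potential in the spirit of \cite{Montanari,Raghavendra}, so that many size-$h$ proper relations for any $h\le\ell$ produce an $\Omega_{\delta,\ell}(1)$ increment. Without such a modification, your telescoping and averaging steps (which are otherwise sound) do not yield the lemma.
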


Setting $T=\lceil\log n\rceil$, we let $\vhA=\vA[\vt]$ for a random $\vec t\in[T]$.
Since $T_0$ in \Lem~\ref{lem_pinning} is independent of the size of $A$ and scales polynomially in $\ell,\delta$, we obtain the following.

\begin{corollary}\label{cor_vhA}
	Let $\omega=\lceil\log\log n\rceil$.
	\Whp\ $\vhA$ is $(\omega^{-1},\omega)$-free.
\end{corollary}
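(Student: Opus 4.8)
The plan is to deduce Corollary~\ref{cor_vhA} from Lemma~\ref{lem_pinning} by choosing the parameters $\delta=\omega^{-1}$ and $\ell=\omega$, where $\omega=\lceil\log\log n\rceil$, and checking that the number $T=\lceil\log n\rceil$ of pinning rows we adopt exceeds the threshold $T_0=O(\ell^3/\delta^4)$ supplied by the lemma. First I would substitute the chosen parameters into the bound for $T_0$: since $T_0=O(\ell^3/\delta^4)=O(\omega^7)=O((\log\log n)^7)$, and this is $o(\log n)$, for all sufficiently large $n$ we have $T=\lceil\log n\rceil\geq T_0$. At that point Lemma~\ref{lem_pinning}, applied to the (random but fixed once we condition on it) matrix $A=\vA$ with $M=m$ rows and $N=n$ columns, yields that for a uniformly random $\vt\in[T]$ the matrix $\vA[\vt]$ is $(\omega^{-1},\omega)$-free with probability at least $1-\omega^{-1}=1-o(1)$.

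The one point that warrants a sentence of care is the interplay between the two sources of randomness: the randomness of $\vA$ itself and the independent randomness of the pinning index $\vt$. Lemma~\ref{lem_pinning} holds for \emph{every} matrix $A$, so in particular it holds after conditioning on any realisation of $\vA$; hence the stated probability bound $\pr[\vA[\vt]\text{ is }(\omega^{-1},\omega)\text{-free}\mid\vA]>1-\omega^{-1}$ holds pointwise in $\vA$. Taking expectations over $\vA$ preserves the bound, so $\pr[\vhA\text{ is }(\omega^{-1},\omega)\text{-free}]>1-\omega^{-1}=1-o(1)$, which is exactly the assertion that $\vhA$ is $(\omega^{-1},\omega)$-free \whp. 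Strictly speaking one also wants $\omega\geq2$ so that the freeness condition is non-vacuous, which again holds for large $n$.

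There is no real obstacle here: the corollary is a bookkeeping consequence of Lemma~\ref{lem_pinning}, and the only thing to verify is the growth rate $(\log\log n)^7=o(\log n)$, which is immediate. The mild subtlety is purely expository, namely emphasising that $T_0$ in Lemma~\ref{lem_pinning} depends only on $\ell$ and $\delta$ and not on the dimensions $M,N$ of the matrix --- this is what allows us to let $M=m$ and $N=n$ grow while keeping $T=\lceil\log n\rceil$ as a valid (indeed generously large) choice of pinning budget. I would therefore present the proof in two or three lines, checking $T\geq T_0$ for large $n$ and invoking the lemma conditionally on $\vA$.
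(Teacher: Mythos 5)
Your proposal matches the paper's argument exactly: the paper derives Corollary~\ref{cor_vhA} by observing that $T_0=O(\ell^3/\delta^4)=O(\omega^7)=o(\log n)$, so $T=\lceil\log n\rceil\geq T_0$ for large $n$, and that $T_0$ depends only on $\delta,\ell$ and not on the dimensions of $A$. Your conditional application of Lemma~\ref{lem_pinning} given $\vA$ and the subsequent averaging are exactly the bookkeeping the paper leaves implicit, so this is correct and not a different route.
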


Thanks to the scarcity of short proper relations provided by \Cor~\ref{cor_vhA} we will be able to characterise the frozen set $\cF(\vhA)$ in terms of the WP message passing scheme, which is the next item on our agenda.

\subsection{Warning Propagation}\label{sec_wp_intro}
Since we will need to work not just with $\vhA$ but also with a few other matrices derived from it, we introduce WP for a general matrix $A$ of size $M\times N$.
The matrix $A$ naturally induces a bipartite graph $G(A)$ called the {\em Tanner graph}.
Its vertex set comprises a set $V_N=\{v_1,\ldots,v_N\}$ of {\em variable nodes} and another set $F_M=\{a_1,\ldots,a_M\}$ of {\em check nodes}.
The former represent the columns of $A$ and the latter the rows.
An edge $a_iv_j$ is present in $G(A)$ iff $A_{ij}\neq0$.
For a vertex $u\in V_N\cup F_M$ let $\partial u=\partial_Au$ denote its set of neighbours.
Moreover, for a set $S\subset V_N\cup F_M$ let $A\setminus S$ be the minor of $A$ obtained by deleting all rows $i$ such that $a_i\in S$ as well as all columns $j$ such that $v_j\in S$.
In defining the WP scheme we follow~\cite{parity}.

The thrust of WP is to characterise the set $\cF(A)$ of frozen variables in terms of just the immediate local interactions between variables and their adjacent checks.
To this end we associate messages with the edges of the Tanner graph.
Specifically, each edge $v_ja_i$ of $G(A)$ comes with one message directed from $v_j$ to $a_i$ and a message in the reverse direction.
The messages take the symbolic values $\{\unfrozen,\frozen\}$ to represent `unfrozen' and `frozen'.
Let
	\begin{align*}
		\fM(A)&=\cbc{\fm=(\fm_{v\to a},\fm_{a\to v})_{v\in V_N,v\in\partial_A a}:\fm_{v\to a},\fm_{a\to v}\in\{\unfrozen,\frozen\}}
	\end{align*}
be the set of all possible collections of messages.
Further, define the {\em standard messages} of $A$ by letting
\begin{align}\label{eqWPdef}
	\fm_{v_j\to a_i}(A)&=\begin{cases} \frozen&\mbox{ if $j\in\cF(A\setminus\cbc{a_i})$}\\ \unfrozen&\mbox{ otherwise}
	\end{cases} &
		\fm_{a_i\to v_j}(A)&=\begin{cases} \frozen&\mbox{ if $v_j\in\cF(A\setminus(\partial v_j\setminus\cbc{a_i}))$}\\ \unfrozen&\mbox{ otherwise}
		\end{cases}\qquad(i\in[M],\,j\in[N]).
\end{align}
Thus, $\fm_{v_j\to a_i}(A)=\frozen$ indicates that variable $v_j$ is frozen in the matrix obtained from $A$ by deleting row $a_i$.
Similarly, $\fm_{a_i\to v_j}(A)=\frozen$ if variable $v_j$ is frozen in the matrix obtained from $A$ by deleting all rows $a_h\in\partial_A v_j$ except for $a_i$.

If indeed freezing were a perfectly local phenomenon transmitted along the edges of the Tanner graph, then the messages \eqref{eqWPdef} should remain invariant under the {\em Warning Propagation update} $\WP_A:\fM(A)\to\fM(A)$, $\fm=(\fm_{\nix\to\nix})\mapsto\WP(\fm)=(\hat\fm_{\nix\to\nix})$, which is defined by
\begin{align}\label{eqWPupdate1}
	\hat\fm_{v_j\to a_i}&=\begin{cases} \frozen&\mbox{ if $\exists a_h\in\partial v_j\setminus\cbc{a_i}:\fm_{a_h\to v_j}=\frozen$},\\
		 \unfrozen&\mbox{ otherwise,}
	 \end{cases} &
		\hat\fm_{a_i\to v_j}&=\begin{cases} \frozen&\mbox{ if $\forall x_h\in\partial a_i\setminus\{v_j\}:\fm_{x_h\to a_i}=\frozen$},\\
			\unfrozen&\mbox{ otherwise.}
	\end{cases}
\end{align}
Indeed, the first update rule $\hat\fm_{v_j\to a_i}$ expresses that we expect $v_j$ to be frozen in $A\setminus\{a_i\}$ iff some other check $a_h$ `freezes' $v_j$.
Similarly, one might expect that $a_i$ causes $v_j$ to freeze iff all the other variables $x_h$ adjacent to $a_i$ freeze, thereby leaving no other option to satisfy $a_i$ but to always set $x_j$ to zero as well.

Finally, in order to extract the set of frozen variables from the WP messages, we define $\{\unfrozen,\slush,\frozen\}$-valued labels to go with the variable and check nodes: for $\fm\in\fM(A)$ let
\begin{align}\label{eqWPmarks1}
	\fm_{v_j}&=\begin{cases} \frozen&\mbox{if $\fm_{a\to v_j}=\frozen$ for at least two $a\in\partial v_j$},\\
		\slush&\mbox{if $\fm_{a\to v_j}=\frozen$ for precisely one $a\in\partial v_j$},\\
		 \unfrozen&\mbox{otherwise,}
	\end{cases} \\
		\fm_{a_i}&=\begin{cases} \frozen&\mbox{if $\fm_{v\to a_i}=\frozen$ for all $v\in\partial a_i$},\\
			\slush&\mbox{if $\fm_{v\to a_i}=\frozen$ for all but one $v\in\partial a_i$},\\
			\unfrozen&\mbox{otherwise.}
	\end{cases}
\label{eqWPmarks2}
\end{align}
Here the new label $\fm_{v_j}(A)=\slush$ (`slush') indicates that $v_j$ is `barely' frozen as there is only one incoming $\frozen$-message.
At first glance the $\slush$-label may seem superfluous as it could just be subsumed by $\frozen$ in the case of $\fm_{v_j}$, and by $\unfrozen$ in $\fm_{a_i}$.
However, under \eqref{eqWPupdate1} the $\slush$-labeled vertices `return' different messages than those labeled $\frozen$ or $\unfrozen$.
For instance, if $\fm_{v_j}=\slush$, then $\hat\fm_{v_j\to a_i}=\unfrozen$ if $\fm_{a_i\to v_j}=\frozen$, whereas in the case $\fm_{v_j}=\frozen$ we have $\hat\fm_{v_j\to a_i}=\frozen$ for all $a_i\in\partial v_j$.
Let $\fm_{v_j}(A)$, $\fm_{a_i}(A)$ denote the labels extracted via \eqref{eqWPmarks1}--\eqref{eqWPmarks2} from the standard messages $\fm_{\nix\to\nix}(A)$ from \eqref{eqWPdef}.

It is easily verified that the WP messages \eqref{eqWPdef} coincide with the updated messages if $G(A)$ is acyclic, i.e.,
\begin{align}\label{eqWPfixedEx}
	\fm_{v_j\to a_i}(A)&=\hat\fm_{v_j\to a_i}(A),&\fm_{a_i\to v_j}(A)&=\hat\fm_{a_i\to v_j}(A),
\end{align}
for all $i,j$ such that $a_i\in\partial_Av_j$. 
But it is equally easy to come up with cyclic Tanner graphs where \eqref{eqWPfixedEx} is violated.

Nonetheless, the following proposition shows that \eqref{eqWPfixedEx} is satisfied on the random matrix $\vhA$ for all but $o(n)$ adjacent pairs $a_i,v_j$ \whp\
The proposition also shows that the labels extracted via \eqref{eqWPmarks1} correctly identify the set $\cF(\vhA)$, up to at most $o(n)$ exceptions.
Furthermore, in most kernel vectors the values of the unfrozen variables are about `balanced'.
Let $\vha=|\cF(\vhA)|/n$ be the fraction of frozen variables of $\vhA$ and let $\vhs$ be a uniformly random element of $\ker\vhA$.
Moreover, let $d_{\vhA}(v_j)$ denote the degree of a variable node $v_j$ in $G(\vhA)$.

\begin{proposition}\label{prop_vhA}
	Let $d>0,k\geq3$.
	\Whp\ we have
	\begin{align}\label{eqprop_vhAi}
		\sum_{i=1}^m\sum_{v_j\in\partial_{\vhA} a_i}\vecone\cbc{\fm_{v_j\to a_i}(\vhA)\neq\hat\fm_{v_j\to a_i}(\vhA)}+\vecone\cbc{\fm_{a_i\to v_j}(\vhA)\neq\hat\fm_{a_i\to v_j}(\vhA)}&=o(n),\\
		\abs{\cbc{j\in[n]:\fm_{v_j}(\vhA)\neq\unfrozen}\triangle\cF(\vhA)}&=o(n)\label{eqprop_vhAii},\\
		\sum_{s\in\FF_q}\sum_{\ell\geq0}\abs{\sum_{j=1}^n\vecone\{d_{\vhA}(v_j)=\ell,\,\fm_{v_j}(\vhA)=\unfrozen\}\bc{\vecone\{\vhs_j=s\}-1/q}}&=o(n).\label{eqprop_vhAiii}
	\end{align}
\end{proposition}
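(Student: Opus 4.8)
The plan is to establish \eqref{eqprop_vhAi}--\eqref{eqprop_vhAiii} by exploiting the fact that, thanks to pinning (\Cor~\ref{cor_vhA}), the Tanner graph $G(\vhA)$ is \emph{locally tree-like with few short proper relations}, so that the recursive definitions \eqref{eqWPdef} of the standard messages can be computed from bounded-depth neighbourhoods. First I would recall the standard fact that a random $k$-uniform hypergraph with $m=\Theta(n)$ edges has the property that, for any fixed radius $r$, all but $o(n)$ variable nodes have a depth-$r$ neighbourhood in $G(\vA)$ that is a tree, and that adding the $O(\log n)$ pinning rows preserves this (each pinning check attaches to a uniformly random column, so \whp\ no two pinning checks land in the same bounded neighbourhood). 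The key point is that if the depth-$(2\omega)$ neighbourhood of an edge $a_iv_j$ is a tree, then by unwinding \eqref{eqWPdef} the standard message $\fm_{v_j\to a_i}(\vhA)$ is determined by that tree and satisfies the fixed-point relation \eqref{eqWPupdate1} locally — freezing on a finite tree is genuinely a local phenomenon. This yields \eqref{eqprop_vhAi}: the pairs $a_i,v_j$ that can possibly violate \eqref{eqWPfixedEx} must have a cycle or a short proper relation within distance $2\omega$, and by \Cor~\ref{cor_vhA} together with the tree-likeness count there are only $o(n)$ such pairs.

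For \eqref{eqprop_vhAii} the strategy is to show that the local labels $\fm_{v_j}(\vhA)$ computed from \eqref{eqWPmarks1} agree with membership in $\cF(\vhA)$ except on an $o(n)$ set. One inclusion is essentially immediate from the definition of the standard messages: if $\fm_{v_j}(\vhA)=\frozen$ then two distinct deleted-check matrices already force $\sigma_j=0$, which via a short linear-algebra argument (combining the two relations) shows $j\in\cF(\vhA)$ up to the exceptional set where short proper relations interfere. The reverse inclusion — that a genuinely frozen $v_j$ is detected locally — is the more delicate direction: here one argues that if $j\in\cF(\vhA)$ but $\fm_{v_j}(\vhA)=\unfrozen$, then the relation witnessing $j\in\cF(\vhA)$ cannot be localised, so it must use a proper relation of size $\le\omega$ or reach outside the tree neighbourhood, and \Cor~\ref{cor_vhA} again bounds the number of such $j$ by $o(n)$. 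I would phrase this using the $(\omega^{-1},\omega)$-freeness directly: a frozen coordinate not seen by WP after $\omega$ rounds produces a proper relation of size between $2$ and $\omega$, of which there are fewer than $\omega^{-1}\binom n h$ for each $h$, hence $o(n)$ in total.

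The balancedness statement \eqref{eqprop_vhAiii} is where the main work lies, and I expect it to be the principal obstacle. The idea is: conditioned on $\cF(\vhA)$ and on the event that WP has converged locally, the kernel $\ker\vhA$ decomposes — the frozen coordinates are pinned to $0$, and on the unfrozen coordinates the system behaves like a random linear system with no short relations, so a uniformly random $\vhs\in\ker\vhA$ has its unfrozen coordinates distributed (approximately, in the sense of local weak convergence) like independent uniform $\FF_q$ values, degree class by degree class. Concretely I would fix a degree $\ell$ and a value $s$, and estimate $\Erw[\,\cdot\,]$ and the second moment of $\sum_j \vecone\{d_{\vhA}(v_j)=\ell,\ \fm_{v_j}(\vhA)=\unfrozen\}(\vecone\{\vhs_j=s\}-1/q)$ over the random choice of $\vhs$, using that for two unfrozen coordinates $j,j'$ far apart in $G(\vhA)$ the joint marginal of $(\vhs_j,\vhs_{j'})$ factorises up to $o(1)$ — this is exactly where the absence of short proper relations (beyond the frozen ones) is used, via the observation that a correlation between $\vhs_j$ and $\vhs_{j'}$ would itself constitute a short proper relation. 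Summing the resulting $o(1)$-per-pair bound over the $O(n^2)$ pairs and applying Chebyshev, then summing the finitely many degree classes $\ell$ that carry all but $o(n)$ of the mass (the tail $\ell$ large contributes $o(n)$ since the degree distribution is essentially Poisson), gives \eqref{eqprop_vhAiii}. The subtlety to be careful about is making the ``$\vhs$ restricted to unfrozen coordinates looks like independent uniform'' heuristic precise without circularity — I would isolate it as the statement that for any fixed set $S$ of $O(1)$ unfrozen, pairwise-far coordinates, the map $\ker\vhA\to\FF_q^S$ is (up to an $o(|\ker\vhA|)$ defect) surjective with uniform fibres, which follows because a failure of surjectivity onto $\FF_q^S$ is precisely a proper relation supported on $S\cup\cF(\vhA)$ of bounded size.
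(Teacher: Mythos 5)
Your argument for \eqref{eqprop_vhAiii} is essentially the paper's: for two unfrozen $j,j'$ whose pair $\{j,j'\}$ is not a proper relation, the projection $\ker\vhA\to\FF_q^2$ is an epimorphism (so has uniform fibres), there are at most $o(n^2)$ bad pairs by \Cor~\ref{cor_vhA}, and Chebyshev finishes. That part is sound.

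The argument for \eqref{eqprop_vhAi} has a genuine gap. You assert that if the depth-$(2\omega)$ neighbourhood of $a_iv_j$ is a tree, then ``the standard message $\fm_{v_j\to a_i}(\vhA)$ is determined by that tree.'' This is false. By \eqref{eqWPdef}, $\fm_{v_j\to a_i}(\vhA)=\frozen$ iff $j\in\cF(\vhA\setminus\{a_i\})$, and the frozen set $\cF(\vhA\setminus\{a_i\})$ is a global linear-algebraic property of the one-row-deleted matrix; it is not a function of any bounded-depth neighbourhood, however tree-like. (If the whole Tanner graph were a finite tree, locality would hold; a locally tree-like patch inside a large cyclic graph does not give this.) What the fixed-point property actually asserts is a nontrivial identity between \emph{different} global frozen sets, $\cF(\vhA\setminus\{a_i\})$ versus $\cF(\vhA\setminus(\partial v_j\setminus\{a_h\}))$. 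Establishing it requires showing that the frozen status of the second neighbours of $v_j$ is stable under removing different subsets of $\partial v_j$. The paper's proof does exactly this: resample around a random $\vv$ (Fact~\ref{fact_astar}), introduce the reference matrix $\vA^-$ obtained by deleting all of $\vv$'s checks, use $(\omega,1/\omega)$-freeness to rule out $\partial^2\vv$ being a proper relation of $\vA^-$ (event {\bf E2}), and then invoke the exact rank identity of Fact~\ref{fact_indep} to track nullities as checks are re-attached. Tree-likeness alone does not deliver this stability; the freeness of $\vA^-$ relative to the second neighbourhood is the indispensable input. The same gap propagates to your sketch of \eqref{eqprop_vhAii}: the claim that a frozen coordinate not detected by WP ``produces a proper relation of size between $2$ and $\omega$'' is not substantiated --- the relation witnessing $j\in\cF(\vhA)$ may be arbitrarily large, and the conflict with $\fm_{v_j}(\vhA)=\unfrozen$ does not obviously localise to a short one. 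The paper derives \eqref{eqprop_vhAii} instead from the second halves of Claims~\ref{claim_mhatm1} and~\ref{claim_mhatm2}, again via Fact~\ref{fact_indep} under {\bf E2}--{\bf E3}.
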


Observe that \eqref{eqprop_vhAiii} posits that the unfrozen variables are not just `balanced' overall (in the sense that every value $s\in\FF_q$ occurs with frequency about $1/q$), but that balance even holds once we break things down to unfrozen variables of some specific degree $\ell\geq0$.
The proof Proposition~\ref{prop_vhA}, which we carry out in \Sec~\ref{sec_standard}, rests on the scarcity of short linear relations provided by \Cor~\ref{cor_vhA}.

\subsection{Quenched analysis}
Recall that our goal is to show that $\vhs$ is approximately balanced \whp\
\Prop~\ref{prop_vhA} reduces this task to showing that $\vha=o(1)$ \whp\
To this end we are going to extract some more detailed information about the WP messages that the variable and check nodes exchange.
Specifically, we are going to estimate the number of variables/checks with specific labels according to \eqref{eqWPmarks1}--\eqref{eqWPmarks2}.
In fact, we even need to know the number of variables/checks with specific labels and with specific numbers of incoming/outgoing message pairs.
Hence, our next goal is to derive such formulas in terms of the (as yet) unknown random variable $\vha$.

To account for the numbers of message pairs received/sent by the various nodes let $\cL$ be the set of all vectors $\ell=(\ell_{\unfrozen \unfrozen},\ell_{\unfrozen \frozen},\ell_{\frozen \unfrozen},\ell_{\frozen\frozen})\in\ZZpos^4$.
For $\ell\in\cL$, a label $z\in\{\frozen,\slush,\unfrozen\}$, a matrix $A$ and a collection of messages $\fm_{\nix\to\nix}\in\fM(A)$ let
\begin{align}\label{eqbiDelta1}
	\Delta_{z,\ell}(\fm_{\nix\to\nix})=\cbc{v\in V(A):\bc{\fm_v=z}\wedge\bc{\forall{s,t\in\{\unfrozen,\frozen\}}:\abs{\cbc{a\in\partial v:\fm_{a\to v}=s,\,\fm_{v\to a}=t}}=\ell_{st}}},\\
		\Gamma_{z,\ell}(\fm_{\nix\to\nix})=\cbc{a\in F(A):\bc{\fm_a=z}\wedge\bc{\forall{s,t\in\{\unfrozen,\frozen\}}:\abs{\cbc{v\in\partial a:\fm_{v\to a}=s,\,\fm_{a\to v}=s}}=\ell_{st}}}.\label{eqbiDelta2}
\end{align}
Thus, $\Delta_{z,\ell}$ comprises variable nodes labelled $z$ by \eqref{eqWPmarks1} that receive/send out numbers of WP messages as detailed by $\ell$.
To be precise, the first label $s$ of $\ell_{st}$ encodes the incoming message, while the second index $t$ specifies the outgoing messages.
Similarly, $\Gamma_{z,\ell}$ counts checks with a given label and given message statistics.

We are going to calculate $|\Delta_{z,\ell}(\fm_{\nix\to\nix}(\vhA))|,|\Gamma_{z,\ell}(\fm_{\nix\to\nix}(\vhA))|$ in terms of the fraction $\vha$ of frozen variables.
As a first step, the following sets comprise the conceivable vectors $\ell$ to go with the various types of variable/check nodes, in line with \eqref{eqWPmarks1}--\eqref{eqWPmarks2}:
\begin{align}
	\cD(\unfrozen)&=\cbc{\ell\in\cL:\ell_{\frozen\unfrozen}=\ell_{\unfrozen \frozen}=\ell_{\frozen \frozen}=0},&
	\cG(\unfrozen)&=\cbc{\ell\in\cL:\ell_{\unfrozen\frozen}=\ell_{\frozen\frozen}=0,\ell_{\unfrozen\unfrozen}\geq2,\ell_{\frozen\unfrozen}=k-\ell_{\unfrozen\unfrozen}},\label{eqcDcG1}\\
	\cD(\slush)&=\cbc{\ell\in\cL:\ell_{\frozen\unfrozen}=1,\ell_{\frozen \frozen}=\ell_{\unfrozen\unfrozen}=0},&
	\cG(\slush)&=\cbc{\ell\in\cL:\ell_{\unfrozen\unfrozen}=\ell_{\frozen \frozen}=0,\ell_{\unfrozen\frozen}=1,\ell_{\frozen\unfrozen}=k-1},\label{eqcDcG2}\\
	\cD(\frozen)&=\cbc{\ell\in\cL:\ell_{\unfrozen\unfrozen}=\ell_{\frozen\unfrozen}=0,\ell_{\frozen\frozen}\geq2},&
	\cG(\frozen)&=\cbc{\ell\in\cL:\ell_{\unfrozen\unfrozen}=\ell_{\unfrozen\frozen}=\ell_{\frozen\unfrozen}=0,\ell_{\frozen\frozen}=k}.\label{eqcDcG3}
\end{align}

Further, we hypothesise that the incoming messages at a check node $a_i$ are essentially independent.
This seems plausible as the Tanner graph $G(\vhA)$ is a sparse random graph with bounded average degree $d$ on the variable side and constant degree $k$ on the check side.
Therefore, typically the neighbouring variable nodes $\partial_{\vhA}a_i$ should end up being far from each other in $G(\vhA\setminus\{a_i\})$, and far apart vertices might conceivably decorrelate.
By a similar token, we expect that the messages received by a typical variable node $v_j$ ought to be nearly independent.
If so, and if we presume that variable-to-check messages take the value $\frozen$ with some probability $0\leq\alpha\leq1$, then check-to-variable messages should take the value $\frozen$ with probability $\alpha^{k-1}$;
for according to \eqref{eqWPupdate1} a check-to-variable message should be $\frozen$ iff all of the check's other $k-1$ incoming messages are $\frozen$.
In light of \eqref{eqcDcG1}--\eqref{eqcDcG3} we can thus predict the frequencies for the variable/check nodes of the various types.
For instance, if $\vha=\alpha$, then we expect to see about $\bar\delta(\alpha,\unfrozen)=\exp(-d\alpha^{k-1})n$ variables $v_j$ with $\fm_{v_j}(\vhA)=\frozen$.
This is because by \eqref{eqcDcG1} such a variable $v_j$ must not receive any $\frozen$-messages, while the mean number of such incoming messages should be $d\alpha^{k-1}$.
Similarly, we arrive at predictions for the frequencies of the other node types:
\begin{align}
	\bar\delta(\alpha,\unfrozen)&=\exp(-d\alpha^{k-1}),&
	\bar\delta(\alpha,\slush)&=d\alpha^{k-1}\exp(-d\alpha^{k-1}),&
	\bar\delta(\alpha,\frozen)&=1-\exp(-d\alpha^{k-1})(1+d\alpha^{k-1}),\label{eqdeltagamma1}\\
	\bar\gamma(\alpha,\unfrozen)&=1-k(1-\alpha)\alpha^{k-1}-\alpha^k,&
	\bar\gamma(\alpha,\slush)&=k(1-\alpha)\alpha^{k-1},&
	\bar\gamma(\alpha,\frozen)&=\alpha^k.\label{eqdeltagamma2}
\end{align}

Finally, extending the reasoning outlined in the previous paragraph, we can derive predictions as to the frequencies of nodes with various labels and given statistics $\ell\in\cL$ of incoming/outgoing messages.
With $\Po_{\geq 2}(\lambda)$ and $\Bin_{\geq2}(N,p)$ denoting the conditional Poisson/Binomial distributions given an outcome of at least two, we obtain the following expressions: 
\begin{align}\label{eqDeltafell}
	\bar\Delta_{\unfrozen,\ell}(\alpha)&=\bar \delta(\alpha,\unfrozen)\vecone\{\ell\in\cD(\unfrozen)\}\pr[\Po(d(1-\alpha^{k-1}))=\ell_{\unfrozen\unfrozen}],\\
	\bar\Delta_{\slush,\ell}(\alpha)&=\bar \delta(\alpha,\slush)\vecone\{\ell\in\cD(\slush)\}\pr[\Po(d(1-\alpha^{k-1}))=\ell_{\unfrozen\frozen}],\\
	\bar\Delta_{\frozen,\ell}(\alpha)&=\bar \delta(\alpha,\frozen)\vecone\{\ell\in\cD(\frozen)\}\pr[\Po_{\geq2}(d\alpha^{k-1})=\ell_{\frozen\frozen}]\pr[\Po(d(1-\alpha^{k-1}))=\ell_{\unfrozen\frozen})],\\
	\bar\Gamma_{\unfrozen,\ell}(\alpha)&=\bar \gamma(\alpha,\unfrozen)\vecone\{\ell\in\cG(\unfrozen)\}\pr\brk{\Bin_{\geq2}(k,1-\alpha)=\ell_{\unfrozen\unfrozen}}\label{eqGammafell0},
	\\
	\bar\Gamma_{\slush,\ell}(\alpha)&=\bar \gamma(\alpha,\slush)\vecone\{\ell\in\cG(\slush)\},\\
	\bar\Gamma_{\frozen,\ell}(\alpha)&=\bar \gamma(\alpha,\frozen)\vecone\{\ell\in\cG(\frozen)\}.\label{eqGammafell}
\end{align}
The following proposition shows that the aforementioned predictions are accurate \whp

\begin{proposition}\label{prop_wp}
	Let $d>0,k\geq3$.
	Then
	\begin{align*}
		\sum_{z\in\{\frozen,\slush,\unfrozen\}}\sum_{\ell\in\cL}\ex\abs{|\Delta_{z,\ell}(\fm_{\nix\to\nix}(\vhA))|-n\bar\Delta_{z,\ell}(\vha)}+\ex\abs{|\Gamma_{z,\ell}(\fm_{\nix\to\nix}(\vhA))|-m\bar\Gamma_{z,\ell}(\vha)}&=o(n).
	\end{align*}  
\end{proposition}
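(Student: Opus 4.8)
The plan is to leverage Proposition~\ref{prop_vhA} to reduce the counting of labels-with-message-statistics to a purely local computation in the Tanner graph $G(\vhA)$. By \eqref{eqprop_vhAi} all but $o(n)$ adjacent pairs $(a_i,v_j)$ satisfy the WP fixed-point identity \eqref{eqWPfixedEx}, so up to an $o(n)$ error in every quantity we may replace the standard messages $\fm_{\nix\to\nix}(\vhA)$ by the (unique) messages obtained by running WP to a fixed point on each tree-like neighbourhood. Likewise, by \eqref{eqprop_vhAii} we may replace $\cF(\vhA)$ by $\{j:\fm_{v_j}(\vhA)\neq\unfrozen\}$ up to $o(n)$, so that $\vha n = |\{j:\fm_{v_j}(\vhA)\ne\unfrozen\}| + o(n)$. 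The heart of the matter is then: for a variable node $v_j$ of degree $\ell$, conditioning on $\vha$, what is the probability that its $\ell$ incoming check-to-variable messages follow a prescribed frozen/unfrozen pattern and (jointly) that the $\ell$ outgoing messages follow the induced pattern?

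The key steps, in order, are as follows. First, I would fix $\eps>0$ and use Corollary~\ref{cor_vhA} to condition on $\vhA$ being $(\omega^{-1},\omega)$-free; on this event, for all but $o(n)$ variable nodes the depth-$C$ neighbourhood in $G(\vhA)$ (for any fixed constant $C$) is a tree, by a standard first-moment/switching estimate for sparse random bipartite graphs with Poisson$(d)$ variable degrees and constant check degree $k$. Second, I would establish the \emph{local structure of WP on trees}: on a finite tree rooted at $v_j$, the standard messages equal the fixed-point messages, and a check-to-variable message into $v_j$ is $\frozen$ iff all $k-1$ other variables feeding that check are themselves frozen in the corresponding pendant subtree. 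This is where the recursive self-consistent equation enters: if a variable-to-check message is $\frozen$ with probability $\alpha$, a check-to-variable message is $\frozen$ with probability $\alpha^{k-1}$, and a variable node with degree distributed as $\Po(d)$ is frozen (sends $\frozen$) iff it receives at least one $\frozen$ message, i.e.\ with probability $1-\exp(-d\alpha^{k-1})$ — but crucially I do \emph{not} need to solve this fixed-point equation here; I only need that, \emph{conditioned on the observed value} $\vha=\alpha$, the empirical law of incoming messages at a random degree-$\ell$ variable node matches $\ell$ independent Bernoulli$(\alpha^{k-1})$ draws up to $o(1)$, which follows because the pendant subtrees are (asymptotically) independent and the single scalar $\alpha$ summarises the relevant boundary data. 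Third, I would translate this into the formulas \eqref{eqDeltafell}--\eqref{eqGammafell}: a variable node contributes to $\Delta_{\unfrozen,\ell}$ iff it receives zero $\frozen$ messages and has $\ell_{\unfrozen\unfrozen}$ incoming (all unfrozen), which happens with probability $\exp(-d\alpha^{k-1})\cdot\pr[\Po(d(1-\alpha^{k-1}))=\ell_{\unfrozen\unfrozen}]$ after Poisson thinning of the degree; to $\Delta_{\slush,\ell}$ iff exactly one incoming $\frozen$ (the node is then \slush, and \eqref{eqWPupdate1} forces its outgoing message on that edge to be $\unfrozen$, consistent with $\ell_{\frozen\unfrozen}=1$); and to $\Delta_{\frozen,\ell}$ iff at least two incoming $\frozen$ messages (whence $\fm_{v_j}=\frozen$ and all outgoing messages are $\frozen$, matching $\cD(\frozen)$), with the number of such messages distributed as $\Po_{\geq2}(d\alpha^{k-1})$ after conditioning. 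The check-side formulas \eqref{eqGammafell0}--\eqref{eqGammafell} are the mirror image: a check is \frozen\ iff all $k$ incoming variable-to-check messages are \frozen\ (probability $\alpha^k$), \slush\ iff exactly $k-1$ are (probability $k(1-\alpha)\alpha^{k-1}$), and \unfrozen\ otherwise, with the $\Bin_{\geq2}(k,1-\alpha)$ factor recording how many of the $k$ variables are unfrozen in the \unfrozen\ case. Fourth, I would assemble these local contributions via linearity of expectation and concentrate them: $\ex|\Delta_{z,\ell}(\fm_{\nix\to\nix}(\vhA))|$ is, up to $o(n)$, the sum over $j\in[n]$ of the probability of the local event at $v_j$, and the argument above identifies this with $n\,\ex[\bar\Delta_{z,\ell}(\vha)]$; a final short step using $\ex|\vha-\alpha|$-type bounds (or simply that $\bar\Delta_{z,\ell}$ is Lipschitz in $\alpha$ uniformly and $\vha$ is bounded) replaces $\ex[\bar\Delta_{z,\ell}(\vha)]$ by $\ex[\bar\Delta_{z,\ell}(\vha)]$ — which is exactly the right-hand side — at the cost of $o(n)$. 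Summing over the (at most $O(1)$ relevant, and tail-negligible) values of $z$ and $\ell$ completes the bound; note the sum over $\ell\in\cL$ is controlled because both $\Po(d\,\cdot\,)$ tails and the true degree tails decay, so the contribution of $\ell$ with $|\ell|\geq\omega$ is $o(n)$ on both sides.

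The main obstacle I anticipate is making the \emph{independence-of-incoming-messages} heuristic rigorous at the level of exact first moments — i.e.\ showing that, after conditioning on the global statistic $\vha$, the joint law of the message pattern at a random variable node factorises as claimed, rather than merely that each marginal is $\Bernoulli(\alpha^{k-1})$. The clean way to do this is \emph{not} to prove a genuine local weak limit (which would entangle us with the fixed-point equation and its stability), but to exploit \eqref{eqprop_vhAi} together with the tree-likeness of neighbourhoods: on a tree, the incoming messages at $v_j$ are \emph{exactly} deterministic functions of disjoint pendant subtrees, hence genuinely independent \emph{conditioned on the edge where $v_j$ sits}, and the only global coupling is through how many of those subtrees "look frozen", which by \eqref{eqprop_vhAii} is pinned down by $\vha$ up to $o(n)$ aggregate error. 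Turning this into the stated $o(n)$ bound on the sum of expectations is then a matter of a careful but routine union bound over the $o(n)$ "bad" nodes (non-tree-like, or violating \eqref{eqWPfixedEx}) plus a concentration/Lipschitz argument to handle the randomness of $\vha$ itself; I would not expect surprises there, but the bookkeeping over the six label classes and the vector index $\ell$ is where the write-up will be longest. A secondary subtlety is the handling of the $O(\log n)$ pinning rows: they attach a bounded-in-expectation number of degree-one checks to random variables, perturbing a few variable degrees and a few messages, but this affects only $O(\log n)=o(n)$ nodes and is absorbed into the error term.
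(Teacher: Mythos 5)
Your plan captures the right \emph{local} picture: tree-like neighbourhoods, the exact Poisson/Binomial thinning identities behind \eqref{eqDeltafell}--\eqref{eqGammafell}, and the use of \Cor~\ref{cor_vhA} and \Prop~\ref{prop_vhA} to replace the standard messages by their WP-update and to identify the frozen set with the $\fm_{v_j}\neq\unfrozen$ labels up to $o(n)$. This is indeed how the paper sets things up. But there is a genuine gap in the final assembly step, and it is not cosmetic. The proposition asserts that the \emph{random variable} $|\Delta_{z,\ell}(\fm_{\nix\to\nix}(\vhA))|$ is close in $L^1$ to the \emph{random variable} $n\bar\Delta_{z,\ell}(\vha)$, where $\vha$ itself is a function of $\vhA$ and correlated with $|\Delta_{z,\ell}|$. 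Your step ``assemble via linearity of expectation, identifying $\ex|\Delta_{z,\ell}|$ with $n\,\ex[\bar\Delta_{z,\ell}(\vha)]$'' only controls the \emph{means}; it does not control $\ex\abs{|\Delta_{z,\ell}|-n\bar\Delta_{z,\ell}(\vha)}$. (Your sentence that the final short step ``replaces $\ex[\bar\Delta_{z,\ell}(\vha)]$ by $\ex[\bar\Delta_{z,\ell}(\vha)]$'' is a tautology, which I take to be a symptom of this unresolved point rather than a typo.) Some form of variance / second-moment control \emph{conditional on $\vhA$} (equivalently, conditional on $\vha$) is required, and that is exactly where the real work is.

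The paper closes this gap by computing the probability of a \emph{joint} local event for a uniformly random pair $(\vv,\vv')$ of variable nodes after resampling their neighbourhoods (Fact~\ref{fact_astar2}): \Lem~\ref{lem_Dzl} shows that $\pr[\vv\in\Delta_{z,\ell},\,\vv'\in\Delta_{z',\ell'}\mid\vhA]$ factorises as $\bar\Delta_{z,\ell}(\vha)\bar\Delta_{z',\ell'}(\vha)+o(1)$, with the analogous statement \Lem~\ref{lem_Gzl} for check pairs. Taking $(z',\ell')=(z,\ell)$ gives precisely the conditional second moment of $|\Delta_{z,\ell}|/n$; combined with the first-moment identity this yields $\Var(|\Delta_{z,\ell}|\mid\vhA)=o(n^2)$, and Chebyshev then delivers the stated $L^1$ bound against $n\bar\Delta_{z,\ell}(\vha)$. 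The pair computation is also the clean way to make your ``incoming messages are asymptotically independent given $\vha$'' heuristic rigorous: after deleting $\partial\vv\cup\partial\vv'$ to form $\vA^-$, the events ``a given resampled second neighbour lies in $\cF(\vA^-)$'' are genuinely exchangeable and, by \Cor~\ref{cor_tinker}, have probability $\vha+o(1)$. Your outline would need to be augmented with this pair/second-moment step (or an equivalent variance bound) to become a proof.
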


Thus, $|\Delta_{z,\ell}(\fm_{\nix\to\nix}(\vhA))|$, $|\Gamma_{z,\ell}(\fm_{\nix\to\nix}(\vhA))|$ approximately equal $\bar\Delta_{z,\ell}(\vha)n$, $\bar\Gamma_{z,\ell}(\vha)m$ evaluated at the actual fraction $\vha$ of frozen variables of $\vhA$, which is a random variable.
The proof of \Prop~\ref{prop_wp}, which can be found in \Sec~\ref{sec_prop_wp}, is based on coupling arguments.
In particular, the proof does not reveal the likely value of $\vha$.

\subsection{Annealed arguments}
In light of \eqref{eqprop_vhAiii} from \Prop~\ref{prop_vhA} our main task is to show that $\vha=o(1)$ \whp\ if $d<(1-\eps)d_k/k$.
To this end we are going to combine \Prop~\ref{prop_wp} with a first moment argument that shows that for $d<(1-\eps)d_k/k$ only the scenario $\vha=o(1)$ \whp\ can account for the $q^{n-m+o(n)}$ vectors that the kernel of the $(m+o(n))\times n$-matrix $\vhA$ must inevitably contain.
In other words, we are going to show that WP fixed points with $\Omega(n)$ frozen variables come with too small a number of kernel vectors.

In this respect the present argument differs significantly from prior proofs of \Thm~\ref{thm_xor}~\cite{DuboisMandler,PittelSorkin}.
Instead of first investigating the likely shape of vectors in the kernel (specifically, that they `come from' WP fixed points with certain statistics), these analyses directly estimate the expected number of vectors in the kernel with a given Hamming weight; of course, this kind of argument is workable only in the case $q=2$.
The drawback of a blunt moment computation is that even extremely rare events make a contribution.
Such large deviations effects tend to lead to intricate and technically demanding analytical optimisation problems.

The present `annealed' argument (viz.\ moment computation) consists of two layers.
First we estimate the expected number of WP fixed points with the `correct' statistics as provided by \eqref{eqDeltafell}--\eqref{eqGammafell}.
To be precise, reminding ourselves of the update rules \eqref{eqWPupdate1}, we call $\fm_{\nix\to\nix}\in\fM(\vhA)$ an {\em $\alpha$-WP fixed point} if
\begin{align}\label{eqalphaWP1}
	\sum_{i=1}^m\sum_{v_j\in\partial_{\vhA}a_i}\vecone\cbc{\fm_{v_j\to a_i}\neq\hat\fm_{v_j\to a_i}}+\vecone\cbc{\fm_{a_i\to v_j}\neq\hat\fm_{a_i\to v_j}}&=o(n)\qquad\mbox{and}\\
	\sum_{z\in\{\frozen,\slush,\unfrozen\}}\sum_{\ell\in\cL}\abs{|\Delta_{z,\ell}(\fm)|-n\bar\Delta_{z,\ell}(\alpha)}+\abs{|\Gamma_{z,\ell}(\fm)|-m\bar\Gamma_{z,\ell}(\alpha)}&=o(n).\label{eqalphaWP2}
\end{align}
Thus, we ask that most messages be invariant under the update \eqref{eqWPupdate1}, and that the counts $|\Delta_{z,\ell}(\fm)|,|\Gamma_{z,\ell}(\fm)|$ be in line with \Prop~\ref{prop_wp}.
Performing relatively simple manipulations of the formulas \eqref{eqDeltafell}--\eqref{eqGammafell}, we will ultimately see that the expected number of $\alpha$-WP fixed points is sub-exponential for any $0\leq\alpha\leq1$.

As a next step, we will estimate the number of kernel vectors $\sigma$ that come with a particular WP fixed point.
To be precise, call $\sigma\in\ker\vhA$ an {\em extension of $\fm_{\nix\to\nix}\in\fM(\vhA)$} if 
\begin{align}\label{eqextension}
	\sum_{i=1}^n\vecone\{\fm_{v_i}\neq\unfrozen,\,\sigma_i\neq0\}+\sum_{s\in\FF_q}\sum_{\ell\geq0}\abs{\sum_{i=1}^n\vecone\{d_{\vhA}(v_i)=\ell,\,\fm_{v_i}=\unfrozen\}(\vecone\{\sigma_i=s\}-1/q)}&=o(n).
\end{align}
Thus, $\sigma$ is required to (mostly) respect the variables that $\fm_{\nix\to\nix}$ deems frozen under \eqref{eqWPmarks1} by actually setting them to zero.
Moreover, the variables deemed unfrozen according to $\fm_{\nix\to\nix}$ need to be assigned in a balanced manner, even when broken down to specific values $\ell$ of the variable degree, just like in \eqref{eqprop_vhAiii}.
In fact, \Prop s~\ref{prop_vhA} and~\ref{prop_wp} show that a random kernel vector $\vhs$ is an $\vha$-extension of the standard messages $\fm_{\nix\to\nix}(\vhA)$.
Hence, letting $\vX_\alpha$ be the number of pairs $(\fm_{\nix\to\nix},\sigma)$ such that $\fm_{\nix\to\nix}$ is an $\alpha$-WP fixed point of $\vhA$ and $\sigma$ is an extension of $\alpha$, we see that $|\ker\vhA|\sim\vX_{\vha}$ \whp\
By comparison, the following proposition, which we prove in \Sec~\ref{sec_covers}, provides a first moment upper bound on $\vX_\alpha$ for any $0\leq\alpha\leq1$ in terms of the function $\Phi_{d,k}$ from~\eqref{eqPhi}.

\begin{proposition}\label{prop_annealedwp}
	Let $d>0,k\geq3$.
	\Whp\ for all $\alpha\in[0,1]$ we have $\ex[\vX_\alpha\mid\fD]\leq q^{n\Phi_{d,k}(\alpha)+o(n)}$. 
\end{proposition}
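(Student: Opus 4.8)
The plan is to run a first moment computation conditionally on the $\sigma$-algebra $\fD$, which we take to fix the degree sequence of the Tanner graph $G(\vhA)$ and to place us on a good event where the variable degrees are essentially $\Po(d)$-distributed with at most $o(n)$ of them exceeding $\omega=\lceil\log\log n\rceil$; conditionally on $\fD$ the graph $G(\vhA)$ is contiguous to a configuration-model bipartite graph with all check degrees equal to $k$. By linearity, $\ex[\vX_\alpha\mid\fD]=\sum_{(\fm_{\nix\to\nix},\sigma)}\pr[\fm_{\nix\to\nix}\text{ is an }\alpha\text{-WP fixed point of }\vhA\text{ and }\sigma\in\ker\vhA\text{ is an extension}\mid\fD]$, and I would first group the pairs $(\fm_{\nix\to\nix},\sigma)$ by their macroscopic profile: the counts $(|\Delta_{z,\ell}(\fm_{\nix\to\nix})|,|\Gamma_{z,\ell}(\fm_{\nix\to\nix})|)_{z,\ell}$, the number of edges violating the update \eqref{eqalphaWP1}, and the degree/value histogram of $\sigma$. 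Truncating variable degrees at $\omega$ and letting the $o(n)$-slacks in \eqref{eqalphaWP1}--\eqref{eqextension} vanish slowly, only $\eul^{o(n)}$ profiles are compatible with a given $\alpha$, so it suffices to bound $\ex[\#\{(\fm_{\nix\to\nix},\sigma)\text{ of profile }p\}\mid\fD]$ for one $\alpha$-compatible profile $p$.

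For a fixed $\alpha$-compatible profile the expected count factorises, up to a factor $\eul^{o(n)}$, into (a) the number of ways to attach labels and incoming/outgoing message patterns to the nodes consistently with the prescribed counts, (b) the probability that the configuration-model pairing actually realises such a pattern, and (c) the number of assignments $\sigma$ meeting the balance requirements of \eqref{eqextension} for the prescribed frozen set, times the probability that a uniformly chosen such $\sigma$ lies in $\ker\vhA$. Pieces (a) and (b) together contribute only $\eul^{o(n)}$: this is exactly the assertion, flagged after \eqref{eqalphaWP2}, that the expected number of $\alpha$-WP fixed points is sub-exponential, and it follows from Stirling's formula plus some elementary algebra on \eqref{eqDeltafell}--\eqref{eqGammafell}, the mechanism being that a check emits $\frozen$ along an edge iff its remaining $k-1$ incoming messages are all $\frozen$, which forces the Poisson/Binomial weights in \eqref{eqDeltafell}--\eqref{eqGammafell} and makes the combinatorial entropy and the realisation probability cancel. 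The count in (c) is a product of balanced multinomials over the unfrozen degree classes (with the $(1-\bar\delta(\alpha,\unfrozen)+o(1))n$ frozen coordinates pinned to $0$), hence $q^{\bar\delta(\alpha,\unfrozen)n+o(n)}=q^{\exp(-d\alpha^{k-1})n+o(n)}$.

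It remains to estimate, for a uniformly random balanced $\sigma=\vhs$ and the pairing re-randomised subject to the prescribed profile, the probability $\pr_{\vhs}[\vhA\vhs=0]=\prod_{i}\pr_{\vhs}[(\vhA\vhs)_i=0]$. A check all of whose $k$ neighbours are frozen is automatically satisfied by an extension, and up to $o(n)$ these are precisely the $\frozen$- and $\slush$-labelled checks, whence their number is $(\bar\gamma(\alpha,\frozen)+\bar\gamma(\alpha,\slush))m+o(n)=(k\alpha^{k-1}-(k-1)\alpha^k)m+o(n)$ by $\alpha$-compatibility; every other check has an unfrozen neighbour $v_j$, and as $\vhs_j$ is uniform on $\FF_q$ while $\fA_{ij}\neq0$, the $i$-th equation holds with probability $\tfrac1q$. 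Hence $\pr_{\vhs}[\vhA\vhs=0]=q^{-\bar\gamma(\alpha,\unfrozen)m+o(n)}=q^{n(d\alpha^{k-1}-\frac{d(k-1)}{k}\alpha^k-\frac dk)+o(n)}$. Multiplying (a)--(c) gives $q^{n\Phi_{d,k}(\alpha)+o(n)}$ with $\Phi_{d,k}$ as in \eqref{eqPhi}; as every error term is uniform in $\alpha$, a polynomially fine net in $[0,1]$ together with the continuity of $\Phi_{d,k}$ extends the bound to all $\alpha\in[0,1]$ at once.

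The hard part is the kernel probability $\pr_{\vhs}[\vhA\vhs=0]$. Since $\fA$ is arbitrary, the rows of $\vhA$ are not exchangeable and one cannot pass, as in \cite{DuboisMandler,PittelSorkin}, to a single representative $\sigma$ of a prescribed Hamming weight; this forces the computation to be set up as a double average over $\vhs$ and the re-randomised pairing, so that each product $\fA_{ij}\vhs_j$ is genuinely uniform on $\FF_q$ rather than possibly aligned with $\vhs$ -- equivalently, one expands $\pr_{\vhs}[(\vhA\vhs)_i=0]=\tfrac1q\sum_{t\in\FF_q}\ex[\psi(t(\vhA\vhs)_i)]$ over an additive character $\psi$ and shows the non-principal terms vanish after summation over $\vhs$. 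The remaining delicate point is that the factorisation over $i$ and the exact value $\tfrac1q$ hold only approximately once one conditions on the macroscopic profile, as the configuration-model edges are dependent and the conditioning reweights the residual matching; controlling this -- via a switching argument, and the fact that the profile only pins $o(1)$-stable macroscopic quantities so that the residual law of the unfrozen half-edges is asymptotically that of a fresh matching -- together with keeping all $o(n)$ errors uniform in $\alpha$, is where most of the technical effort goes.
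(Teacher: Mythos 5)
Your proposal follows the same overall architecture as the paper's proof: pass to the configuration (pairing) model conditionally on $\fD$, factorise the first moment into (i)~the expected number of $\alpha$-WP fixed points, which you correctly identify as $\eul^{o(n)}$, (ii)~the count $q^{\bar\delta(\alpha,\unfrozen)n+o(n)}$ of balanced extensions of a given cover, and (iii)~the probability that such a $\sigma$ is in the kernel, which you correctly put at $q^{-\bar\gamma(\alpha,\unfrozen)m+o(n)}$; your bookkeeping then reassembles these into $q^{n\Phi_{d,k}(\alpha)+o(n)}$, and the union over a polynomial net in $\alpha$ is a legitimate way to get uniformity. This is exactly the paper's decomposition into Lemma~\ref{lemma_cover} and Lemma~\ref{lem_extension}.

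Where your sketch stops short is in the execution of~(iii), which you rightly flag as the crux. You propose either a character-sum expansion or a ``switching argument'' combined with the heuristic that the residual matching ``is asymptotically a fresh matching once one conditions on the macroscopic profile''; these are plausible but not actually carried out, and the dependency structure you are trying to tame (overlapping check constraints, conditioning on the empirical value histogram of $\sigma$, conditioning on the cover profile) is precisely what makes a hand-wave here risky. The paper resolves it differently and more surgically: fix which check clones are hit by unfrozen variable clones (the set $\fI$ and hence $\cI$), introduce an \emph{independent} i.i.d.\ uniform auxiliary vector $\vec\chi$ over those clones, observe that the true vector of values has the same law as $\vec\chi$ once one conditions on the empirical type event $\fR$ (and $\pr[\fR]=\eul^{o(n)}$), and then apply Bayes: $\pr[\fX\mid\fR]\leq\pr[\fX]/\pr[\fR]\leq q^{-|\cI|+o(n)}$. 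No switching and no character sums are needed, and the independence of the $\vec\chi_{ih}$ across $i$ makes the factorisation over checks exact rather than ``asymptotic''. A second, minor, point you gloss over: because the passage from an extension $\sigma$ to a perfectly compatible assignment $\tau$ (zeroing the coordinates the cover declares frozen) destroys exact kernel membership, the paper only needs, and only proves, the probability that $\tau$ is \emph{essentially} satisfying, i.e.\ $\|A(\vec\pi)\tau\|_0=o(n)$; trying to keep exact kernel membership while also insisting $\sigma_j=0$ on the declared-frozen set, as your phrasing suggests, would be inconsistent. So: same route, correct target quantities, but the hard lemma is asserted rather than proved, and the paper's auxiliary-variable-plus-Bayes mechanism is the piece you would need to supply.
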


Since for $d<d_k$ the function $\Phi_{d,k}$ attains its unique maximum at $\alpha=0$ and $q^{n\Phi_{d,k}(0)}=q^{n-m}$, it is not very difficult to derive the estimate $\vha=o(1)$ \whp\ from \Prop~\ref{prop_annealedwp}.
From this, in turn, we can deduce that \whp\ most vectors in the kernel are `balanced', i.e., contain every value $s\in\field_q$ with about equal frequency.
To be precise, for a vector $\sigma\in\field_q^n$ let $\rho(\sigma)=(\rho_s(\sigma))_{s\in\FF_q}$ be the vector with entries $\rho_s(\sigma)=\frac1n\sum_{i=1}^n\vecone\{\sigma_i=s\}$.

\begin{corollary}\label{cor_annealedwp}
	Let $\eps>0$.
	If $m<(1-\eps)d_kn/k$, then \whp\ we have
	\begin{align}\label{eqcor_annealedwp}
		\ex\brk{\|\rho(\vhs)-q^{-1}\vecone\|_2\mid\vhA}=o(1).
	\end{align}
\end{corollary}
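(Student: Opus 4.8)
The plan is to combine the two preceding propositions with a short deterministic argument. By \Prop~\ref{prop_annealedwp}, \whp\ the matrix $\vhA$ satisfies $\ex[\vX_\alpha\mid\fD]\leq q^{n\Phi_{d,k}(\alpha)+o(n)}$ for all $\alpha\in[0,1]$. On the other hand, $\vhA$ has at most $m+O(\log n)$ rows, so $\nul(\vhA)\geq n-m-O(\log n)$ and hence $|\ker\vhA|\geq q^{n-m-o(n)}$. Now \Prop s~\ref{prop_vhA} and~\ref{prop_wp} together say that \whp\ a uniformly random $\vhs\in\ker\vhA$ is an $\vha$-extension of the standard messages $\fm_{\nix\to\nix}(\vhA)$, where $\vha=|\cF(\vhA)|/n$; and by \eqref{eqprop_vhAi} and \Prop~\ref{prop_wp} the standard messages form an $\vha$-WP fixed point. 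Consequently $|\ker\vhA|\leq\vX_{\vha}(1+o(1))$ \whp, and Markov's inequality applied to $\ex[\vX_{\vha}\mid\fD]$ (after noting that $\vha$ ranges over a discretisable set $[0,1]$, so a union bound over an $o(1)$-net costs only a polynomial factor) gives $\vX_{\vha}\leq q^{n\Phi_{d,k}(\vha)+o(n)}$ \whp. Combining the lower and upper bounds on $|\ker\vhA|$ yields $q^{n-m-o(n)}\leq q^{n\Phi_{d,k}(\vha)+o(n)}$, i.e. $\Phi_{d,k}(\vha)\geq 1-m/n-o(1)\geq 1-(1-\eps)d_k/k-o(1)$.

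Next I would invoke the extremal characterisation of $d_k$ in \eqref{eqPhi}. For $d<d_k$ the function $\Phi_{d,k}$ attains its maximum on $[0,1]$ precisely at $\alpha=0$, with $\Phi_{d,k}(0)=1-d/k$; moreover this maximum is strict and, away from $0$, the value of $\Phi_{d,k}$ is bounded below $1-d/k$ by a definite margin depending only on $d,k,\eps$. (This is a one-variable calculus fact about the explicit function $\Phi_{d,k}$; I would record it as a short separate lemma, checking that $\Phi_{d,k}'(0^+)<0$ and that $\Phi_{d,k}(\alpha)<1-d/k$ for all $\alpha\in(0,1]$ when $d<d_k$, using the definition of $d_k$ as the supremum of $d$ for which the maximum still equals $1-d/k$.) Feeding $m/n\leq(1-\eps)d_k/k<d_k/k$ into the inequality $\Phi_{d,k}(\vha)\geq 1-m/n-o(1)$, the margin forces $\vha\leq\eta(\eps)$ for some $\eta(\eps)\to0$; since $\eps>0$ is arbitrary and the bound is uniform, we conclude $\vha=o(1)$ \whp.

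Finally I would convert $\vha=o(1)$ into the balance statement \eqref{eqcor_annealedwp}. Decompose the coordinates of $\vhs$ into the frozen set $\cF(\vhA)$ and its complement. On $\cF(\vhA)$ every kernel vector vanishes, contributing at most $|\cF(\vhA)|/n=\vha=o(1)$ to each $|\rho_s(\vhs)-q^{-1}|$ (and the single over-counted mass at $s=0$ is likewise $O(\vha)$). On the complement, \eqref{eqprop_vhAiii} of \Prop~\ref{prop_vhA} — which holds \whp\ for $\vhA$ and in expectation over $\vhs$ — bounds $\sum_{s}\bigl|\sum_j\vecone\{\fm_{v_j}(\vhA)=\unfrozen\}(\vecone\{\vhs_j=s\}-1/q)\bigr|$ by $o(n)$ after summing over $\ell$; combined with \eqref{eqprop_vhAii}, which says $\{j:\fm_{v_j}(\vhA)\neq\unfrozen\}$ and $\cF(\vhA)$ differ in $o(n)$ places, this controls $\|\rho(\vhs)-q^{-1}\vecone\|_1$ by $o(1)$ in expectation over $\vhs$, hence $\|\rho(\vhs)-q^{-1}\vecone\|_2=o(1)$ in expectation as well since $\|\cdot\|_2\leq\|\cdot\|_1$. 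Taking expectations over the \whp\ event for $\vhA$ gives \eqref{eqcor_annealedwp}.

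The main obstacle is the step that turns the \emph{pointwise-in-$\alpha$} expectation bound of \Prop~\ref{prop_annealedwp} into a bound on the single random quantity $\vX_{\vha}$: one must ensure that the union bound over possible values of $\vha$ (which is a random variable depending on $\vhA$, not an a priori fixed parameter) only costs a sub-exponential factor, so that $\Phi_{d,k}(\vha)$ can be compared with its near-maximum $1-m/n$. This requires discretising $\alpha$ on a sufficiently fine net, using continuity of $\alpha\mapsto\bar\Delta_{z,\ell}(\alpha),\bar\Gamma_{z,\ell}(\alpha)$ and of $\Phi_{d,k}$ so that nearby $\alpha$'s give $(1+o(1))$-comparable fixed-point counts, and then applying Markov over the net. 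Everything downstream — the calculus facts about $\Phi_{d,k}$ and the bookkeeping that passes from $\vha=o(1)$ to the $\ell_2$-balance — is routine by comparison.
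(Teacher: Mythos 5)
Your proof follows essentially the same route as the paper's: invoke Proposition~\ref{prop_annealedwp} together with the lower bound $\nul\vhA\geq n-m-o(n)$ and Propositions~\ref{prop_vhA},~\ref{prop_wp} to force $\vha=o(1)$ via the strict maximum of $\Phi_{d,k}$ at $0$ (Fact~\ref{fact_phi}), and then feed $\vha=o(1)$ back into \eqref{eqprop_vhAii}--\eqref{eqprop_vhAiii} to obtain the $\ell_2$-balance. Two small imprecisions that do not affect the substance: the vanishing tolerance should be a fresh parameter $\xi=\xi(n)\to0$ slowly (the paper's $\xi$), not the fixed $\eps$ of the corollary statement; and $\Phi_{d,k}'(0^+)=0$ (not $<0$) by \eqref{eqPhi'} since $k\geq3$, though $\alpha=0$ is still the unique maximiser for $d<d_k$, which is exactly what Fact~\ref{fact_phi} already records.
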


It is quite easy to calculate the expected number of vectors $\sigma\in\ker\vhA$ with $\|\rho(\sigma)-q^{-1}\vecone\|_2=o(1)$.
Recall that we obtained $\vhA$ from $\vA$ by adding $\vt$ extra rows with a single non-zero entry each.
In \Sec~\ref{sec_lem_annealed_trivial} we prove the following.

\begin{lemma}\label{lem_annealed_trivial}
	For any $d>0,k\geq3$ there is $\eta>0$ such that $\ex\abs{\{\sigma\in\ker\vhA:\|\rho(\sigma)-q^{-1}\vecone\|_2<\eta\}\mid\vt}\leq(1+o(1))q^{n-m-\vt}.$ 
\end{lemma}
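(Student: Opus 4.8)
The plan is to compute the first moment directly by linearity of expectation, conditioning on $\vt$ so that the total number of rows $M=m+\vt$ is fixed. Write $\cB_\eta=\{\sigma\in\field_q^n:\|\rho(\sigma)-q^{-1}\vecone\|_2<\eta\}$ for the set of near-balanced vectors. Then
\begin{align}\label{eqlem_triv_plan1}
	\ex\abs{\{\sigma\in\ker\vhA:\sigma\in\cB_\eta\}\mid\vt}=\sum_{\sigma\in\cB_\eta}\pr[\vhA\sigma=0\mid\vt].
\end{align}
The key point is that the rows of $\vhA$ are independent given $\vt$: the first $m$ rows are the i.i.d.\ random $k$-rows coming from $\vA$, and the last $\vt$ rows are independent uniform single-entry rows. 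So $\pr[\vhA\sigma=0\mid\vt]=p_k(\sigma)^m\cdot p_1(\sigma)^{\vt}$, where $p_j(\sigma)$ is the probability that a single random $j$-entry row $\vec r$ (with non-zero coefficients drawn from $\fA$, in the $m$-row case) satisfies $\scal{\vec r}{\sigma}=0$.

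First I would compute $p_1(\sigma)$: a single-entry row has $\scal{\vec r}{\sigma}=\fA_{\bullet}\sigma_{\vec j}$ for a uniform coordinate $\vec j$ and a fixed non-zero scalar, so this vanishes iff $\sigma_{\vec j}=0$, giving $p_1(\sigma)=\rho_0(\sigma)=q^{-1}+O(\eta)$. Next, for $p_k(\sigma)$: pick a uniform $k$-subset $\ve\subset[n]$ and fixed non-zero coefficients; the relevant quantity is $\pr[\sum_{j\in\ve}c_j\sigma_j=0]$. The standard way to handle this is a character-sum / Fourier expansion over $\field_q$: writing $\psi$ for a non-trivial additive character,
\begin{align}\label{eqlem_triv_plan2}
	p_k(\sigma)=\frac1q\sum_{t\in\field_q}\ex_{\ve}\brk{\prod_{j\in\ve}\psi(tc_j\sigma_j)}.
\end{align}
The $t=0$ term contributes $1/q$. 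For $t\neq0$, since $\sigma$ is $\eta$-balanced the empirical distribution of $(c_j\sigma_j)_{j}$ is within $O(\eta)$ of uniform on $\field_q$ in each relevant coordinate class, so each factor $\ex_j\psi(tc_j\sigma_j)$ is $O(\eta)$ in absolute value (sampling without replacement only changes this by $O(k/n)$), whence the $t\neq0$ terms contribute $O(\eta^k)+O(1/n)=o(1)$ relative to $1/q$ once $\eta$ is small. Thus $p_k(\sigma)=q^{-1}(1+o(1))$ uniformly over $\sigma\in\cB_\eta$, and likewise $p_1(\sigma)=q^{-1}(1+o(1))$. Plugging into \eqref{eqlem_triv_plan1} and bounding $|\cB_\eta|\leq q^n$ gives
\begin{align}\label{eqlem_triv_plan3}
	\ex\abs{\{\sigma\in\ker\vhA:\sigma\in\cB_\eta\}\mid\vt}\leq q^n\cdot\bc{q^{-1}(1+o(1))}^{m+\vt}\leq(1+o(1))q^{n-m-\vt},
\end{align}
using that $m+\vt=O(n)$ so the $(1+o(1))^{m+\vt}$ factor is still $1+o(1)$ — this is where I would need to be slightly careful, choosing $\eta$ (depending on $d,k$) small enough that the per-row correction is $1+o(1/n)$, i.e.\ that the $O(\eta^k)$ and $O(1/n)$ terms inside each $p_j$ are genuinely $o(1/n)$ after raising to the power $\Theta(n)$. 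In fact one gets $p_j(\sigma)\le q^{-1}(1+C_k\eta^k)$ with $C_k$ depending only on $k,q$, so $(q p_j(\sigma))^{m+\vt}\le\exp(C_k\eta^k(m+\vt))=\exp(O(\eta^k n))$; picking $\eta$ small makes this $\exp(o(n))$, which is weaker than $1+o(1)$. To recover the sharp constant one instead splits $\cB_\eta$ into $\cB_{\eta_n}$ with $\eta_n\to 0$ slowly and a thin shell, bounding the shell crudely and the core with $\eta_n^k n=o(1)$; alternatively one proves the statement with $\eta$ replaced by a suitable $\eta=\eta(n)\to0$, which is all that is needed downstream. I would phrase the lemma for fixed small $\eta$ but note the core estimate holds with room to spare.

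The main obstacle is purely the uniformity of the estimate $p_k(\sigma)=q^{-1}(1+o(1))$ over the whole shell $\cB_\eta$ together with controlling the without-replacement correction in \eqref{eqlem_triv_plan2}; both are routine character-sum estimates, but one must make sure the error is $o(1/n)$ per row, not merely $o(1)$, so that the product over $\Theta(n)$ rows stays $1+o(1)$. Everything else — independence of the rows given $\vt$, the split into $p_k$ and $p_1$, and the crude bound $|\cB_\eta|\le q^n$ — is immediate.
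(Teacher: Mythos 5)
Your setup is right: the rows of $\vhA$ are conditionally independent given $\vt$, and by a character-sum (or, as in the paper's Claim~\ref{claim_lm}, a Taylor) expansion one gets $p_k(\sigma)=q^{-1}(1+O(\|\rho(\sigma)-q^{-1}\vecone\|^3))$ for the $m$ rows of support size $k\ge 3$ and $p_1(\sigma)=q^{-1}(1+O(\|\rho(\sigma)-q^{-1}\vecone\|))$ for the $\vt=O(\log n)$ pinning rows, uniformly over the ball. The decisive gap, which you yourself half-recognise, is in the enumeration. Bounding the number of near-balanced $\sigma$ by $q^n$ and taking the worst-case per-row estimate over the whole ball gives $q^{n-m-\vt}\exp(O(\eta^3n))$, which for the constant $\eta$ the downstream application needs is off by $e^{\Omega(n)}$. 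Neither of your proposed repairs closes this. The thin-shell split is internally inconsistent: you want $\eta_n\to0$ ``slowly'' and simultaneously $\eta_n^kn=o(1)$, i.e.\ $\eta_n=o(n^{-1/k})$; and bounding the shell by $q^n\cdot\max_{\text{shell}}\pr[\sigma\in\ker\vhA\mid\vt]\le q^{n-m-\vt}\exp(O(\eta^3n))$ does not make the shell term $o(q^{n-m-\vt})$. Weakening the lemma to $\eta=\eta(n)\to0$ also breaks the application: in the proof of \Thm~\ref{thm_main}~(i) the lemma is combined via Markov with $\ex[\|\rho(\vhs)-q^{-1}\vecone\|_2\mid\vhA]=o(1)$ (\Cor~\ref{cor_annealedwp}), whose rate is uncontrolled, so one needs $\eta$ to dominate an \emph{arbitrary} $o(1)$ sequence and therefore cannot take $\eta_n=o(n^{-1/k})$.

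The missing idea is the entropy (local CLT) count of vectors by type. One sums over the types $r=\rho(\sigma)$, pairing the per-type kernel probability $q^{-m-\vt}\exp(O(n\|r-q^{-1}\vecone\|^3))$ with the type count $\binom{n}{(nr_s)_s}$ and applying the Laplace method. The expansion $H(r)=\log q-\tfrac q2\|r-q^{-1}\vecone\|_2^2+O(\|r-q^{-1}\vecone\|^3)$ shows that, once $\eta$ is a small enough constant, the Gaussian penalty $\exp(-\tfrac q2 n\|r-q^{-1}\vecone\|_2^2)$ in the count strictly dominates the per-row correction $\exp(O(n\|r-q^{-1}\vecone\|^3))$, so the sum over types collapses to $(1+o(1))q^{n-m-\vt}$. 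This is precisely what the paper's proof does. Your character sum is a perfectly serviceable substitute for Claim~\ref{claim_lm}, but the type-count and Laplace step is what actually delivers the sharp constant, and it is absent from your proposal.
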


\begin{proof}[Proof of \Thm~\ref{thm_main} (i)]
	Since the top $m$ rows of $\vhA$ are equal to $\vA$, it suffices to prove that $\vhA$ has full row rank \whp\
	Hence, let $\vy^\dagger\in\FF_q^{m+\vt}$ be a uniformly random vector that is conditionally independent of $\vhA$ given $\vt$.
	In order to conclude that $\vhA$ has full row rank \whp, we just need to show that 
		\begin{align}\label{eqthm_main_i1}
			\pr\brk{\exists \sigma\in\FF_q^n:\vhA \sigma=\vy^\dagger}\sim1.
		\end{align}

	Let $\vZ$ be the number of solutions $\sigma$ to $\vhA \sigma=\vy^\dagger$.
	Because $\vy^\dagger$ is independent of $\vhA$ given $\vt$, we have
	\begin{align}\label{eqthm_main_i2}
		\ex\brk{\vZ\mid\vt}&=q^{n-m-\vt}.
	\end{align}
	Further, let $\cB$ be the event that $\vhA$ enjoys the property \eqref{eqcor_annealedwp}.
	By \Cor~\ref{cor_annealedwp} the mean of $\vZ$ on $\cB$ comes to
\begin{align}
	\ex\brk{\vZ\cdot\vecone\cB\mid\vt}&=\sum_{\substack{A^\dagger\in\FF_q^{(m+\vt)\times n}\\y^{\dagger}\in\FF_q^{m+\vt},\,\sigma\in\FF_q^n}}\vecone\cbc{A^\dagger \sigma=y^\dagger,\,A^\dagger\in\cB}\pr\brk{\vhA=A^\dagger,\vhy=y^\dagger\mid\vt}=q^{n-m-\vt}\pr\brk{\vhA\in\cB}\sim\ex[\vZ\mid\vt].
\label{eqthm_main_i3}
	\end{align}
	Similarly, \Lem~\ref{lem_annealed_trivial} yields
\begin{align}\nonumber
	\ex\brk{\vZ^2\cdot\vecone\cB\mid\vt}&=\ex\brk{\vZ|\ker\vhA|\cdot\vecone\cB\mid\vt}=\sum_{A^\dagger,y^{\dagger},\sigma}\vecone\cbc{A^\dagger \sigma=y^\dagger,\,A^\dagger\in\cB}|\ker A^\dagger|\pr\brk{\vhA=A^\dagger,\vhy=y^\dagger\mid\vt}\\
										&=q^{n-m-\vt}\ex\brk{|\ker\vhA|\cdot\vecone\cB\mid\vt}\leq(1+o(1))q^{2(n-m-\vt)}.
\label{eqthm_main_i4}
	\end{align}
	Combining \eqref{eqthm_main_i2}--\eqref{eqthm_main_i4} with Chebyshev's inequality, we see that $\vZ\sim q^{n-m-\vt}>0$ \whp, which implies \eqref{eqthm_main_i1}.
\end{proof}

\subsection{Discussion}
Preceding the seminal contribution of Dubois and Mandler~\cite{DuboisMandler} that determined the precise $3$-XORSAT threshold, Creignou, Daud\'e and Dubois~\cite{CDD} obtained upper and lower bounds by means of the first and the second moment methods.
These methods went on to become a mainstay of the theory of random constraint satisfaction problems, with numerous important additions~\cite{ANP,DSS3}.
Independently of~\cite{PittelSorkin}, a rigorous derivation of the $k$-XORSAT threshold for general $k$ was outlined in~\cite{Dietzfelbinger}, where the threshold was needed for an application to cuckoo hashing.
The $k$-XORSAT threshold was further investigated from the viewpoint of the physicists' replica and cavity methods~\cite{MRTZ}.
Moreover, the contributions~\cite{AchlioptasMolloy,Ibrahimi} conduct a detailed study of the geometry of the solution space of random $k$-XORSAT formulas.

Various different analyses of the pruning process have been put forward~\cite{OJJ,Cooper, Fernholz2, JansonLuczak, Kim, Pittel, Riordan}.
The methods employed in these works range from differential equations to branching processes to enumerative arguments.
Since none of the proofs are particularly simple, we consider the fact that, in contrast to~\cite{DuboisMandler,PittelSorkin}, the present derivation of the $k$-XORSAT threshold gets by without an explicit investigation  of the pruning process a significant plus.

The derivation of the full rank threshold~\cite{Ayre} also avoided an analysis of the pruning process and instead relied on the Aizenman-Sims-Starr coupling argument from mathematical physics~\cite{Aizenman}.
The main result of~\cite{Ayre} is a variant of \Thm~\ref{thm_main} with identically distributed rows.
Specifically, the non-zero entries in the rows are drawn independently from a given distribution on $(\FF_q\setminus\{0\})^k$.
The present proof method can be easily adapted to cover this scenario, but also allows for the non-zero entries to be copied from a given infinite matrix $\fA$, in which case the rows need not be identically distributed anymore.
Prior to~\cite{Ayre}, which still covers over 50 pages, an extension of the $k$-XORSAT threshold result to random matrices over $\FF_3$ was obtained~\cite{GoerdtFalke} by a generalisation of the moment method from~\cite{DuboisMandler,PittelSorkin}.
The article of over 80 pages requires computer assistance.

The techniques developed in~\cite{Ayre} were extended to more general random matrix models with identically distributed rows~\cite{COGHKLMR}; the main result of that paper also implies the $k$-XORSAT threshold, but the proof is rather complicated.
Additionally, for a still more general model of random matrices over general (not necessarily finite) fields an asymptotic formula for the normalised rank was obtain via the Aizenman-Sims-Starr scheme~\cite{Maurice}.
Furthermore, an independent result yields the asymptotic rank of the random matrix $\vA$ over $\FF_2$, albeit without obtaining the precise full rank threshold~\cite{CFP}.
Here we employ the pinning technique from~\cite{Maurice} (\Lem~\ref{lem_pinning}), which is an adaptation of the more general pinning method for discrete probability distributions developed in~\cite{Montanari,Raghavendra}.

Finally, a recent article~\cite{parity} studies sparse square random matrices over $\field_2$ with independent entries.
The main results, pertaining to the structure of the kernel of such a random matrices, evince a somewhat remarkable bifurcation that contrasts with the zero-one behaviour otherwise characteristic of probabilistic combinatorics.
In the present paper we employ the mathematical formalisation of the WP message passing scheme developed in~\cite{parity}.
Furthermore, the article~\cite{parity} also employed a moment computation similar to the one that we use to prove \Prop~\ref{prop_annealedwp}, but for a substantially different matrix model and towards a somewhat different overall result (an analysis of the kernel geometry rather than a proof that the matrix has full rank).

\subsection{Preliminaries}\label{sec_pre}
We need to reflect on the function $\Phi_{d,k}$ and its maxima.
Let
\begin{align}\label{eqphi}\phi_{d,k}(\alpha)=1-\exp(-d \alpha^{k-1}).\end{align}
A tiny bit of calculus reveals that the functions $\phi_{d,k}$ from \eqref{eqphi} and $\Phi_{d,k}$ from \eqref{eqPhi} are closely related as
\begin{align}\label{eqPhi'}
	\Phi_{d,k}'(\alpha)&=d(k-1)\alpha^{k-2}\bc{\phi_{d,k}(\alpha)-\alpha},\\
	\Phi_{d,k}''(\alpha)&=d(k-1)(k-2)\alpha^{k-3}\bc{\phi_{d,k}(\alpha)-\alpha}-d(k-1)\alpha^{k-2}\bc{1-\phi_{d,k}'(\alpha)}.\label{eqPhi''}
\end{align}
Thus, the fixed points $\alpha\in[0,1]$ of $\phi_{d,k}$ coincide with the stationary points of $\Phi_{d,k}$. 
In fact, the stable fixed points of $\phi_{d,k}$ are precisely the local maxima of $\Phi_{d,k}$.
Moreover, a few lines of calculus reveal the following.

\begin{fact}\label{fact_phi}
	Let $d>0,k\geq3$.
	The function $\phi_{d,k}$ has at most three distinct fixed points in the unit interval, which we denote by $\alpha_{\unfrozen}(d,k)\leq\alpha_{\slush}(d,k)\leq\alpha_{\frozen}(d,k)$.
	There exists a critical value $0<d_k^*<d_k$ such that
	\begin{itemize}
		\item for $d<d_k^*$ we have $\alpha_{\unfrozen}(d,k)\leq\alpha_{\slush}(d,k)\leq\alpha_{\frozen}(d,k)=0$,
		\item for $d=d_k^*$ we have $0=\alpha_{\unfrozen}(d,k)<\alpha_{\slush}(d,k)=\alpha_{\frozen}(d,k)<1$,
		\item for $d>d_k^*$ we have $0=\alpha_{\unfrozen}(d,k)<\alpha_{\slush}(d,k)<\alpha_{\frozen}(d,k)<1$.
	\end{itemize}
	For $d<d_k$ the function $\Phi_{d,k}$ attains its unique maximum at $0$, while $\alpha_{\frozen}(d,k)$ is the unique maximiser for $d>d_k$.
\end{fact}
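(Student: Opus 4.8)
The plan is to treat the analysis of $\phi_{d,k}$ as a one-dimensional fixed-point problem and then transfer the conclusions to $\Phi_{d,k}$ via \eqref{eqPhi'}--\eqref{eqPhi''}. First I would record that $\alpha=0$ is always a fixed point of $\phi_{d,k}$, and that $\phi_{d,k}(1)=1-\eul^{-d}<1$, so that the largest fixed point lies strictly below $1$. To count the remaining fixed points, it is cleaner to divide by $\alpha$ and study $g(\alpha)=(1-\eul^{-d\alpha^{k-1}})/\alpha$ on $(0,1]$; a fixed point $\alpha>0$ of $\phi_{d,k}$ corresponds to $g(\alpha)=1$. A direct computation of $g'$ shows that $\alpha^2 g'(\alpha)$ has the sign of $h(\alpha):=d(k-1)\alpha^{k-1}\eul^{-d\alpha^{k-1}}-(1-\eul^{-d\alpha^{k-1}})$, and substituting $u=d\alpha^{k-1}\in(0,d]$ reduces this to the sign of $(k-1)u\eul^{-u}-(1-\eul^{-u})=\eul^{-u}((k-1)u+1)-1$, which as a function of $u$ is strictly concave, equals $0$ at $u=0$, and tends to $-1$ as $u\to\infty$; hence it is positive on an initial interval $(0,u^*)$ and negative thereafter. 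Consequently $g$ is unimodal on $(0,1]$ — strictly increasing then strictly decreasing — so the equation $g(\alpha)=1$ has at most two solutions in $(0,1)$. Together with the root at $0$ this gives the claimed bound of at most three fixed points, and they are naturally ordered $\alpha_{\unfrozen}\le\alpha_{\slush}\le\alpha_{\frozen}$.

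Next I would introduce the threshold $d_k^*$ as the infimum of $d$ for which $g$ attains the value $1$ on $(0,1)$, equivalently the value of $d$ at which the graph of $\phi_{d,k}$ becomes tangent to the diagonal from below. Because $\max_{(0,1]}g$ is continuous and strictly increasing in $d$ (the map $d\mapsto 1-\eul^{-d\alpha^{k-1}}$ is pointwise strictly increasing), this $d_k^*$ is well defined and the three regimes fall out of the unimodality of $g$: for $d<d_k^*$ we have $g<1$ on $(0,1)$, so $0$ is the only fixed point and $\alpha_{\unfrozen}=\alpha_{\slush}=\alpha_{\frozen}=0$; at $d=d_k^*$ the maximum of $g$ equals $1$ and is attained at a single interior point, giving a double positive fixed point $0<\alpha_{\slush}=\alpha_{\frozen}<1$ while $\alpha_{\unfrozen}=0$; and for $d>d_k^*$ the maximum exceeds $1$, so by unimodality there are exactly two positive solutions $0<\alpha_{\slush}<\alpha_{\frozen}<1$, again with $\alpha_{\unfrozen}=0$. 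That $\alpha_{\frozen}<1$ throughout is the observation $\phi_{d,k}(1)<1$ already noted.

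For the final sentence I would invoke \eqref{eqPhi'}: since $\Phi_{d,k}'(\alpha)=d(k-1)\alpha^{k-2}(\phi_{d,k}(\alpha)-\alpha)$ and $\alpha^{k-2}\ge0$, the sign of $\Phi_{d,k}'$ on $(0,1)$ matches the sign of $\phi_{d,k}(\alpha)-\alpha=\alpha(g(\alpha)-1)$, i.e. the sign of $g(\alpha)-1$. Hence for $d\le d_k^*$ we have $\Phi_{d,k}'<0$ on $(0,1)$ and $\Phi_{d,k}$ is strictly decreasing, so its unique maximum on $[0,1]$ is at $0$; and since $\Phi_{d,k}(0)=1-d/k$, this is consistent with $d_k^*<d_k$ once we recall from \eqref{eqPhi} that $d_k$ is precisely the supremum of $d$ with $\max_\alpha\Phi_{d,k}(\alpha)=1-d/k$. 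For $d>d_k^*$ the function $g-1$ is negative on $(0,\alpha_{\slush})$, positive on $(\alpha_{\slush},\alpha_{\frozen})$, and negative on $(\alpha_{\frozen},1)$, so $\Phi_{d,k}$ decreases, then increases, then decreases: its only candidate maximisers on $[0,1]$ are $0$ and $\alpha_{\frozen}$, with $\Phi_{d,k}(0)=1-d/k$. Comparing the two values $\Phi_{d,k}(0)$ and $\Phi_{d,k}(\alpha_{\frozen})$ as $d$ increases, monotonicity of the difference (which can be read off by differentiating in $d$ and using that $\alpha_{\frozen}$ is a fixed point) shows they coincide exactly at $d=d_k$; thus $0$ is the strict maximiser for $d<d_k$ and $\alpha_{\frozen}(d,k)$ for $d>d_k$, as claimed. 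The main obstacle is the bookkeeping in this last comparison — establishing that $\Phi_{d,k}(0)-\Phi_{d,k}(\alpha_{\frozen}(d,k))$ is strictly monotone in $d$ and crosses zero exactly at $d_k$ — but this is a short computation using the envelope relation $\tfrac{\dd}{\dd d}\Phi_{d,k}(\alpha_{\frozen}(d,k))=\partial_d\Phi_{d,k}$ at the fixed point, and it pins down $d_k^*<d_k$ at the same time.
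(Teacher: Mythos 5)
The paper itself offers no proof of Fact~\ref{fact_phi}; it is stated as a consequence of ``a few lines of calculus,'' so your proposal is supplying an argument rather than competing with one in the text. The overall architecture --- reduce the fixed-point count to a unimodality statement for $g(\alpha)=(1-\eul^{-d\alpha^{k-1}})/\alpha$, define $d_k^*$ as the tangency threshold for $g=1$, use \eqref{eqPhi'} to read off the monotonicity pattern of $\Phi_{d,k}$, and then locate $d_k$ via the envelope computation $\tfrac{\dd}{\dd d}\bigl(\Phi_{d,k}(\alpha_\frozen(d))-\Phi_{d,k}(0)\bigr)=\alpha_\frozen(d)^k/k>0$ --- is correct and, as far as I can tell, exactly the ``few lines of calculus'' the authors had in mind.

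There is, however, one genuine error in the central calculus step. You assert that $\psi(u):=\eul^{-u}\bigl((k-1)u+1\bigr)-1$ is strictly concave. It is not: differentiating gives
\begin{align*}
\psi'(u)&=\eul^{-u}\bigl(k-2-(k-1)u\bigr),&
\psi''(u)&=\eul^{-u}\bigl((k-1)u-(2k-3)\bigr),
\end{align*}
so $\psi$ is concave only for $u<\tfrac{2k-3}{k-1}$ and convex thereafter. Moreover, even granting concavity, the hypotheses you cite ($\psi(0)=0$ and $\psi\to-1$) do not by themselves rule out $\psi\le0$ on all of $(0,\infty)$; one also needs $\psi'(0)>0$. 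Fortunately the fix is immediate and cleaner: from the displayed formula for $\psi'$ one reads off that $\psi$ is strictly increasing on $\bigl(0,\tfrac{k-2}{k-1}\bigr)$ and strictly decreasing on $\bigl(\tfrac{k-2}{k-1},\infty\bigr)$, using $k\ge3$ to get $\psi'(0)=k-2>0$. Combined with $\psi(0)=0$ and $\psi(u)\to-1$, this unimodality yields exactly one sign change of $\psi$, from positive to negative, which is all your subsequent argument requires (in particular it gives $\sign g'(\alpha)=\sign\psi(d\alpha^{k-1})$ and hence the unimodality of $g$). With that substitution the rest of your proof goes through: $g(\alpha)\to0$ as $\alpha\to0^+$ because $k\ge3$, $g(1)<1$, strict monotonicity and continuity of $\max g$ in $d$ give the three regimes and $d_k^*>0$, continuity of $\alpha_\frozen$ near the saddle-node at $d_k^*$ gives $d_k^*<d_k$, and the envelope identity above shows $\Phi_{d,k}(\alpha_\frozen(d))-\Phi_{d,k}(0)$ is strictly increasing on $(d_k^*,\infty)$, crossing zero exactly once, which pins that crossing to $d_k$ by the definition in \eqref{eqPhi}.
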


Additionally, we need the following elementary fact from linear algebra.

\begin{fact}[{\cite[\Lem~2.5]{Maurice}}]\label{fact_indep}
	Let $A,B,C$ be matrices of sizes $M\times N$, $M'\times N$ and $M'\times N'$, respectively.
	Moreover, let $I\subset[N]$ be the set of non-zero columns of $B$ and obtain $B_0$ from $B$ by replacing for every $i\in I\cap\cF(A)$ the $i$-th column of $B$ by zero.
	Unless $I$ is a proper relation of $A$ we have
	\begin{align*}
		\nul\bc{\begin{array}{cc}A&0\\B&C\end{array}}-\nul A+\rk(B_0\ C)=N'.
	\end{align*}
\end{fact}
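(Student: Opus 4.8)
The plan is to reduce the block identity to the rank–nullity theorem applied to a cleverly chosen linear map, exploiting that the columns of $B$ indexed by $\cF(A)$ are "invisible" to $\ker A$. First I would set up the ambient map: consider the $(M+M')\times(N+N')$ matrix $D=\bigl(\begin{smallmatrix}A&0\\B&C\end{smallmatrix}\bigr)$ and observe that $\ker D=\{(x,x')\in\FF_q^N\times\FF_q^{N'}:Ax=0,\ Bx+Cx'=0\}$. The first block of equations forces $x\in\ker A$, so $\ker D$ fibres over $\ker A$: for each $x\in\ker A$ the fibre is the affine solution set $\{x':Cx'=-Bx\}$, which is either empty or a coset of $\ker C$. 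Thus $\nul D$ is governed by (a) the dimension of the space $U\subset\ker A$ of those $x\in\ker A$ for which $Bx\in\Im C$, and (b) $\nul C$. Concretely $\nul D=\dim U+\nul C$.

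Next I would bring in the hypothesis that $I$ (the set of non-zero columns of $B$) is not a proper relation of $A$, and the modified matrix $B_0$ obtained by zeroing out the columns of $B$ sitting in $I\cap\cF(A)$. The key point: for $x\in\ker A$ we have $x_i=0$ for every $i\in\cF(A)$, hence $Bx=B_0x$ for all $x\in\ker A$. So $U=\{x\in\ker A:B_0x\in\Im C\}$ as well. Now I want to compare $U$ with $\ker A$ via the map $x\mapsto B_0x \bmod \Im C$, i.e. the composition $\ker A\hookrightarrow\FF_q^N\xrightarrow{B_0}\FF_q^{M'}\twoheadrightarrow\FF_q^{M'}/\Im C$; call this map $\psi$, so $U=\ker\psi$ and $\dim U=\nul A-\rk\psi$. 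The crux is to identify $\rk\psi$. I claim $\rk\psi=\rk(B_0\ C)-\rk C$, i.e. that restricting $B_0$ to $\ker A$ and then projecting modulo $\Im C$ loses nothing compared to taking all of $\FF_q^N$: equivalently, $\Im(B_0|_{\ker A})+\Im C=\Im(B_0\ C)=\Im B_0+\Im C$. This is exactly where the "not a proper relation" hypothesis is used — if $B_0$ restricted to $\ker A$ failed to span $\Im B_0$ modulo $\Im C$, one could produce a non-trivial linear functional on the rows witnessing that $I\setminus\cF(A)$ is a relation of $A$, contradicting that $I$ is not a proper relation. I would verify this by a transpose/duality argument: a vector $y\in\FF_q^{M'}$ kills $\Im(B_0|_{\ker A})+\Im C$ iff $y^\trans C=0$ and $y^\trans B_0$ vanishes on $\ker A$, i.e. $\supp(y^\trans B_0)\subseteq\cF(A)$; combined with $\supp(y^\trans B_0)\subseteq I\setminus\cF(A)$ (by construction of $B_0$), this forces $y^\trans B_0=0$, so $y$ also kills $\Im B_0$.

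Assembling the pieces: $\nul D=\dim U+\nul C=(\nul A-\rk\psi)+\nul C=\nul A-(\rk(B_0\ C)-\rk C)+\nul C$. Since $C$ has $N'$ columns, $\rk C+\nul C=N'$, so $\nul C-\rk(B_0\ C)+\rk C=N'-\rk(B_0\ C)$, giving $\nul D-\nul A+\rk(B_0\ C)=N'$, which is the claimed identity. The main obstacle is the rank computation in the previous paragraph, namely showing $\Im(B_0|_{\ker A})+\Im C=\Im B_0+\Im C$; everything else is bookkeeping with rank–nullity and the definition of a proper relation. I would be careful that the fibre-counting in the first paragraph is uniform — the fibre over $x\in U$ is a full coset of $\ker C$ and the fibre over $x\notin U$ is empty — so that $\nul D=\dim U+\nul C$ holds exactly, with no error term.
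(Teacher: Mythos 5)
Your overall architecture is sound: the fibering $\nul D=\dim U+\nul C$ over $\ker A$ (with $D$ the block matrix), the observation that $Bx=B_0x$ for $x\in\ker A$, rank--nullity applied to $\psi\colon\ker A\to\FF_q^{M'}/\Im C$, and the final bookkeeping all check out, and you correctly locate where the ``not a proper relation'' hypothesis must enter. (The paper imports this statement from \cite[\Lem~2.5]{Maurice} without proof, so there is no in-text argument to compare against.)

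There is, however, a genuine slip in the duality step. You write that ``$y^\trans B_0$ vanishes on $\ker A$, i.e.\ $\supp(y^\trans B_0)\subset\cF(A)$.'' That ``i.e.'' is false: a row vector vanishes on $\ker A$ precisely when it lies in the row space of $A$, which is a strictly weaker condition than being supported on $\cF(A)$. (For $A=(1\ 1)$ over $\FF_2$ nothing is frozen, yet $(1,1)$ vanishes on $\ker A$.) If this equivalence held, the proper-relation hypothesis would never be invoked, which should itself be a warning sign. The correct step is the one you gesture at in the preceding sentence: $y^\trans B_0$ vanishing on $\ker A$ means $y^\trans B_0=w^\trans A$ for some $w$; combined with $\supp(y^\trans B_0)\subset I\setminus\cF(A)$, if $y^\trans B_0\neq0$ then $\emptyset\neq\supp(w^\trans A)\subset I\setminus\cF(A)$, so $I\setminus\cF(A)$ is a relation of $A$ and hence $I$ is a proper relation of $A$, contradicting the hypothesis. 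Replace the false ``i.e.'' with this and the proof is complete.
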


Further, we make a note of the degree distribution of the Tanner graph $G(\vhA)$.
Because the rows are chosen independently, the degrees of the variable nodes are asymptotically Poisson.
More precisely, routine arguments show that the following is true.

\begin{fact}\label{fact_degs}
	\Whp\ we have $\sum_{\ell\geq0}\exp(\ell)\abs{\pr\brk{\Po(d)=\ell}-\sum_{i=1}^n\vecone\{d_{\vhA}(v_i)=\ell\}}=o(n).$ 
\end{fact}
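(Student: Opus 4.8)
The plan is to track, for each degree value $\ell\ge0$, the empirical count $N_\ell:=\sum_{j=1}^n\vecone\{d_{\vA}(v_j)=\ell\}$ of $G(\vA)$ and to establish the assertion by a triangle inequality passing through the four quantities $\sum_{j}\vecone\{d_{\vhA}(v_j)=\ell\}$, $N_\ell$, $\ex N_\ell$ and $n\,\pr[\Po(d)=\ell]$; all comparisons are made in the $e^\ell$-weighted $\ell^1$-norm over $\ell$. Thus I would bound three contributions separately: (i) the effect of the $\vt=O(\log n)$ pinning rows (passing from $\vhA$ to $\vA$); (ii) the fluctuation $\abs{N_\ell-\ex N_\ell}$; and (iii) the Poisson approximation $\abs{\ex N_\ell-n\,\pr[\Po(d)=\ell]}$. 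The recurring nuisance is the weight $e^\ell$, which forces us to keep control of the maximum variable degree throughout.

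For the maximum degree, observe that since the rows $\ve_1,\dots,\ve_m$ are independent uniformly random $k$-subsets of $[n]$, the variable degree $d_{\vA}(v_j)=\sum_{i=1}^m\vecone\{j\in\ve_i\}$ has distribution exactly $\Bin(m,k/n)$, with mean $mk/n\to d$. A first-moment estimate $\pr[d_{\vA}(v_j)\ge t]\le(mk/n)^t/t!$ together with a union bound over $j\in[n]$ and Stirling's formula gives the classical fact that \whp\ the maximum variable degree $\Delta$ of $G(\vA)$ equals $(1+o(1))\log n/\log\log n$; in particular $\Delta\to\infty$ and $e^\Delta=n^{o(1)}$. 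The same estimates show that \whp\ the $\vt$ pinning rows occupy $\vt$ pairwise distinct columns (two of them collide, or one column is hit twice, with probability $O(\log^2 n/n)$), so $d_{\vhA}(v_j)=d_{\vA}(v_j)$ for all but at most $\vt$ variables, each of which moves up by exactly one degree class, with new degree at most $\Delta+1$. Hence contribution (i) is at most $2\vt\,e^{\Delta+1}=O(\log n)\cdot n^{o(1)}=o(n)$.

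For contribution (iii), put $\beta_\ell:=\pr[\Bin(m,k/n)=\ell]$ and $\pi_\ell:=\pr[\Po(d)=\ell]$, so $\ex N_\ell=n\beta_\ell$. I would couple $X\sim\Bin(m,k/n)$ with $Y\sim\Po(d)$ so that $\pr[X\neq Y]=O(1/n)$ (the Le Cam bound $mk^2/n^2=O(1/n)$ for the Poissonisation, plus $\abs{mk/n-d}=O(1/n)$ to match the parameter), and then, using $\ex[e^{2X}],\ex[e^{2Y}]=O(1)$ and Cauchy--Schwarz, bound $\sum_{\ell\ge0}e^\ell\abs{\beta_\ell-\pi_\ell}\le\ex[(e^X+e^Y)\vecone\{X\neq Y\}]\le\ex[(e^X+e^Y)^2]^{1/2}\,\pr[X\neq Y]^{1/2}=O(n^{-1/2})$, so that $n\sum_\ell e^\ell\abs{\beta_\ell-\pi_\ell}=o(n)$. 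For contribution (ii), fix the deterministic truncation $L_0:=2\log n/\log\log n$, which \whp\ exceeds $\Delta$ and satisfies $e^{L_0}=n^{o(1)}$. For $\ell>\Delta$ we have $N_\ell=0$, and these $\ell$ contribute at most $n\sum_{\ell>\Delta}e^\ell\pi_\ell=o(n)$, because $\Delta\to\infty$ \whp\ while $\sum_{\ell\ge0}e^\ell\pi_\ell<\infty$. For $\ell\le L_0$, changing a single row $\ve_i$ alters $N_\ell$ by at most $2k$, so McDiarmid's inequality yields $\pr[\abs{N_\ell-\ex N_\ell}\ge n^{2/3}]=\exp(-n^{\Omega(1)})$; a union bound over the $O(\log n)$ values $\ell\le L_0$ gives \whp\ $\abs{N_\ell-\ex N_\ell}\le n^{2/3}$ for all such $\ell$, whence $\sum_{\ell\le L_0}e^\ell\abs{N_\ell-\ex N_\ell}\le L_0\,e^{L_0}\,n^{2/3}=n^{2/3+o(1)}=o(n)$.

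Combining (i), (ii) and (iii) via the triangle inequality then yields $\sum_{\ell\ge0}e^\ell\abs{\sum_{j}\vecone\{d_{\vhA}(v_j)=\ell\}-n\,\pr[\Po(d)=\ell]}=o(n)$ \whp, as claimed. I expect the one genuinely delicate point to be the interaction between the exponential weights and the random fluctuations of the degree sequence: once the maximum-degree bound $\Delta=(1+o(1))\log n/\log\log n$ (hence $e^\Delta=n^{o(1)}$) is in place, all that remains are textbook Poisson approximation and bounded-differences concentration.
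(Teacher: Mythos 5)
The paper states Fact~\ref{fact_degs} without proof, merely asserting that it follows from ``routine arguments,'' so there is no in-text proof to compare against; your job was to supply the routine argument, and you have done so correctly. Your decomposition -- exact binomial law $d_{\vA}(v_j)\sim\Bin(m,k/n)$, a Le~Cam coupling combined with Cauchy--Schwarz to handle the exponentially weighted total-variation distance to $\Po(d)$, bounded-differences concentration for the empirical counts $N_\ell$, a truncation at $\Theta(\log n/\log\log n)$ justified by the classical maximum-degree bound, and a separate $O(\log n)\cdot n^{o(1)}$ correction for the $\vt$ pinning rows -- is exactly what the authors have in mind when they call this routine, and each step is carried out correctly. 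Two small remarks. First, the statement of Fact~\ref{fact_degs} as printed is missing a factor of $n$ in front of $\pr[\Po(d)=\ell]$ (as written the left-hand side is $\Theta(n)$, not $o(n)$); you read it with the missing $n$ inserted, which is the intended meaning, and your final display uses the corrected form. Second, in the tail estimate for $\ell>\Delta$ you have $|N_\ell-\ex N_\ell|=\ex N_\ell=n\beta_\ell$ rather than $n\pi_\ell$ as written; this is harmless because your contribution (iii) already bounds $\sum_\ell e^\ell|\beta_\ell-\pi_\ell|=O(n^{-1/2})$, but it is cleaner to truncate everywhere at the deterministic $L_0$ (which majorises $\Delta$ \whp), thereby avoiding both the slip and any appeal to the lower bound $\Delta\to\infty$.
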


For the entropy of a probability distribution $p$ on a finite set $\Omega$ we use the symbol
\begin{align*}
	H(p)&=-\sum_{\omega\in\Omega}p(\omega)\log p(\omega),
\end{align*}
with the convention that $0\log0=0$.
Finally, for a vector $\xi\in\FF_q^N$ we write $\|\xi\|_h$ for the $\ell^h$-norm of $\xi$, with the convention that $\|\xi\|_0=|\supp \xi|=|\{i\in[N]:\xi_i\neq0\}|.$

\section{Warning Propagation}\label{sec_standard}

\noindent
In this section we prove \Prop s~\ref{prop_vhA} and~\ref{prop_wp}.
We begin with some ruminations on short linear relations.

\subsection{Short linear relations}\label{sec_la}
The following lemma shows that if a matrix $A$ possesses few short proper relations, then the same is true of any matrix $A'$ obtained from $A$ by adding a single row.
Moreover, $A$ and $A'$ have more or less the same frozen variables.

\begin{lemma}\label{lem_tinker}
	For any $\delta'>0$, $\ell'\geq2$ there exist $\delta>0,\ell\geq2,N_0>0$ such that for any $N>N_0,M>0$, any $M\times N$-matrix $A$ and any matrix $A'$ obtained from $A$ by adding a single row the following is true.
	If $A$ is $(\delta,\ell)$-free, then
	\begin{enumerate}[(i)]
		\item $A'$ is $(\delta',\ell')$-free, and
		\item $|\cF(A')\setminus\cF(A)|<\delta'N$.
	\end{enumerate}
\end{lemma}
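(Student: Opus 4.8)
The statement is about stability of the $(\delta,\ell)$-freeness property and of the frozen set under a single row addition. The plan is to prove both items by a counting argument that tracks how proper relations and frozen variables can be created when we pass from $A$ to $A' = \binom{A}{r}$, where $r \in \FF_q^N$ is the new row. Throughout, fix $\ell' \geq 2$ and $\delta' > 0$, and aim to choose $\ell$ somewhat larger than $\ell'$ (say $\ell = \ell' + 1$ suffices, but I will keep it flexible) and $\delta$ much smaller than $\delta'$ (polynomially, something like $\delta = (\delta')^{C}$ for a constant $C = C(\ell')$), with $N_0$ large enough to absorb lower-order terms.

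\emph{Step 1: understanding new frozen variables.} A variable $j$ lies in $\cF(A') \setminus \cF(A)$ precisely when there is a left null vector of $A'$ that uses the new row $r$ (with nonzero coefficient, else it would already witness $j \in \cF(A)$) and whose support is exactly $\{j\}$. Equivalently, writing such a combination as $z^\trans A + c\, r$ with $c \neq 0$, we get $z^\trans A = -c\,r + (\text{a vector supported on }\{j\})$. Hence $\supp(z^\trans A) \subseteq \supp(r) \cup \{j\}$, so $J_0 := \supp(r) \cup \{j\}$ is a relation of $A$ containing $j$, or $z^\trans A = 0$ and then $\supp(r) \subseteq \{j\}$, a degenerate case. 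The key point: if $j \notin \cF(A)$, then $\supp(z^\trans A)$ is nonempty and contained in $\supp(r) \cup \{j\}$, and after discarding the coordinates of this support that happen to lie in $\cF(A)$ we still — unless $\supp(r)\cup\{j\}$ is a proper relation of $A$ — get a genuine constraint. The cleanest route is to invoke Fact~\ref{fact_indep} with $B = r$ (a $1\times N$ matrix) and $C$ the empty $1\times 0$ matrix: unless $I := \supp(r)$ is a proper relation of $A$, we have $\nul\binom{A}{r} = \nul A + \rk(B_0) - 1 \le \nul A$, i.e.\ $\nul A' \in \{\nul A - 1\}$ (when $r_0 \neq 0$) or $= \nul A$. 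In the rank-dropping case no new frozen variables appear beyond possibly those forced inside $\supp(r)$; a short linear-algebra argument then bounds $|\cF(A')\setminus\cF(A)|$ by $|\supp(r)| + O(1)$, which is a constant — certainly $< \delta' N$ for $N$ large. In the exceptional case that $\supp(r)$ \emph{is} a proper relation of $A$, we fall back on $(\delta,\ell)$-freeness: this forces $|\supp(r)| > \ell$, and then... actually the safest formulation is to note that $\supp(r)$ proper relation of size $|\supp(r)|$ either has $|\supp(r)| \le \ell$, contradicting $(\delta,\ell)$-freeness would need there to be $\geq \delta\binom{N}{|\supp r|}$ of them but there is at least one, so this is not yet a contradiction — instead I would just bound $|\cF(A')\setminus\cF(A)| \le |\supp(r)|$ crudely and observe $|\supp(r)|$ is a fixed number independent of $N$ (it is determined by $r$), hence $< \delta' N$ for $N > N_0$. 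This disposes of (ii).

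\emph{Step 2: bounding new proper relations.} For (i), I must show $A'$ has fewer than $\delta'\binom{N}{h}$ proper relations of each size $2 \le h \le \ell'$. Let $J$ be a proper relation of $A'$ of size $h$, witnessed by $y'^\trans A' = z^\trans A + c\,r$ with support a nonempty subset of $J \setminus \cF(A')$. Split into two cases. If $c = 0$: then $z^\trans A$ witnesses that $J$ is a relation of $A$, and since $\cF(A') \supseteq \cF(A)$ by Step 1 (frozen sets only grow under row addition — indeed $\ker A' \subseteq \ker A$), the support avoids $\cF(A)$ too, so $J$ is a proper relation of $A$ of size $h \le \ell' < \ell$; by $(\delta,\ell)$-freeness there are fewer than $\delta\binom{N}{h}$ such. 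If $c \neq 0$: then $\supp(z^\trans A) \subseteq J \cup \supp(r)$. Here is where I combine with Step 1's dichotomy on $\supp(r)$. In the generic case ($\supp(r)$ not a proper relation of $A$), I claim $\supp(z^\trans A)$, after removing its $\cF(A)$-coordinates, is a proper relation of $A$ contained in $(J \cup \supp(r)) \setminus \cF(A)$; one checks its size is at most $h + |\supp(r)| \le \ell' + |\supp(r)|$, a constant, so I enlarge $\ell$ to exceed this and apply $(\delta,\ell)$-freeness again — this bounds the \emph{number of possible values of }$\supp(z^\trans A)$, and then each such choice, together with $\supp(r)$ and the $\cF(A')$-coordinates, pins down at most $\binom{N}{\le h}$ completions to $J$... this is getting delicate, and here is the main obstacle (see below). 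The upshot I am aiming for: the number of new proper relations of size $h$ is at most (number of old proper relations of size $\le \ell$) $\times$ (polynomial slack) $\le \delta \cdot \mathrm{poly} \cdot \binom{N}{h} < \delta' \binom{N}{h}$ provided $\delta$ was chosen small enough relative to $\delta'$ and the fixed constant $|\supp(r)|$.

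\emph{Main obstacle.} The genuinely fiddly part is Step 2, case $c \neq 0$: bounding the number of proper relations $J$ of $A'$ of size $h$ that genuinely involve the new row. One cannot simply say "each comes from an old proper relation" because $J$ can have up to $h$ coordinates that are "new" in the sense of lying in $\supp(r) \cup (\cF(A') \setminus \cF(A))$ — but since $|\supp(r)|$ and $|\cF(A')\setminus\cF(A)|$ are both \emph{bounded by a constant depending only on $r$, not on $N$} (by Step 1), the number of ways to choose these "new" coordinates is $O(1)$, and the remaining coordinates of $J$ must form (a superset of) a proper relation of $A$ of size $\le h$, of which there are $< \delta\binom{N}{\le\ell}$. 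So the count of such $J$ is at most $O(1) \cdot \delta \cdot \binom{N}{\ell} = O(\delta)\binom{N}{h}$ once we note $\binom{N}{\ell}/\binom{N}{h}$ is bounded by a power of $N$... wait, that is not bounded — so I must be careful to only ever relate size-$h$ relations to old relations of size $\le h$, not $\le\ell$, which is why $\ell > \ell'$ is needed for a different reason (to handle $\supp(r)\cup\{j\}$-type sets in Step 1, where $|\supp(r)|$ could a priori be large — but then I invoke $(\delta,\ell)$-freeness to say such $r$ cannot itself be a short proper relation, and handle the $|\supp(r)| > \ell$ case by the crude constant bound). Assembling the constants in the right order — first $\ell$ from $\ell'$ and a bound on $|\supp(r)|$, then $\delta$ from $\delta'$ and those, then $N_0$ — is the bookkeeping that makes this lemma's proof longer than its statement suggests, but no single step is deep: it is all elementary linear algebra plus binomial-coefficient estimates.
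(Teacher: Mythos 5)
Your Step~1 contains a fatal error that also undermines Step~2. You claim $|\cF(A')\setminus\cF(A)|$ is bounded by $|\supp(r)|+O(1)$ and that $|\supp(r)|$ is ``a fixed number independent of $N$.'' Both claims are false. First, the lemma quantifies over \emph{any} matrix $A'$ obtained by adding a single row, so $|\supp(r)|$ can be as large as $N$; it is not a constant you can absorb into $N_0$. Second, and more fundamentally, a single row of tiny support can freeze $\Omega(N)$ new variables: take $A$ over $\FF_2$ with $\ker A = \mathrm{span}(\vecone)$ and $r=e_1$; then $\cF(A)=\emptyset$, $\cF(A')=[N]$, yet $|\supp(r)|=1$. (This particular $A$ is not $(\delta,\ell)$-free, so it does not contradict the lemma, but it does refute the linear-algebra bound you rely on.) The mechanism controlling $|\cF(A')\setminus\cF(A)|$ is the hypothesis that $A$ has few short proper relations, not anything about $\supp(r)$, and your argument never connects the two. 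Step~2 inherits the same gap: you acknowledge the ``main obstacle'' of counting proper relations of $A'$ whose witness uses the new row, and your proposed fix again treats $|\supp(r)|$ and $|\cF(A')\setminus\cF(A)|$ as $O(1)$.

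The paper's proof avoids $\supp(r)$ entirely via a pairing/cancellation trick. For (ii): if $j\neq j'$ both lie in $\cF(A')\setminus\cF(A)$, their witnesses $y,z\in\FF_q^{M+1}$ (with $\supp(y^\trans A')=\{j\}$, $\supp(z^\trans A')=\{j'\}$) must each have a nonzero last coordinate, since otherwise $j$ or $j'$ would already be frozen in $A$. Choosing $\zeta$ so that $y_{M+1}+\zeta z_{M+1}=0$ cancels the new-row contribution, and the resulting combination of the rows of $A$ alone has support exactly $\{j,j'\}$. Thus every unordered pair of new frozen variables yields a distinct proper relation of $A$ of size $2$, so $\binom{|\cF(A')\setminus\cF(A)|}{2}<\delta\binom{N}{2}$, which gives $|\cF(A')\setminus\cF(A)|<\delta'N$ once $\delta$ is small. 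For (i) the same cancellation is applied to two \emph{disjoint} proper relations $I',I''$ of $A'$ of size $\ell'$ that are not proper relations of $A$: again both witnesses must use the new row, a suitable combination kills the new row, and $I'\cup I''$ becomes a proper relation of $A$ of size $2\ell'$; taking $\ell=2\ell'$ and $\delta=(\delta')^2 2^{-\ell-16}$ yields the contradiction. This is why the paper sets $\ell=2\ell'$ rather than $\ell'+1$: the combined relation has size $2\ell'$, and the quadratic loss in $\delta$ comes from counting pairs. If you want to salvage your approach, replace the $\supp(r)$ bookkeeping by this cancellation idea; trying to bound things in terms of the support of the added row cannot work.
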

\begin{proof}
	Set $\ell = 2\ell'$ and $\delta = \delta'^22^{-\ell-16}$. 
	Assume for contradiction that $A$ is $(\delta,\ell)$-free but that $A'$ fails to be $(\delta',\ell')$-free.
	Let $\cI'$ be the set of all proper relations $I'$ of $A'$ of size $|I'|=\ell'$ that fail to be proper relations of $A$.
	Since $\cF(A)\subset\cF(A')$, for any $y\in\field_q^{M+1}$ with $$\emptyset\neq\supp (y^\trans A')\subset I'\setminus\cF(A')\subset I'\setminus\cF(A)$$ we have $y_{M+1}\neq0$.
	Furthermore, for sufficiently large $N_0$ the set $\cI'\times\cI'$ contains at least $(\delta'\binom N{\ell'})^2/8$ pairs $(I',I'')$ such that $I'\cap I''=\emptyset$.
	Given such a pair $(I',I'')$ let $y,z\in\FF_q^{M+1}$ be such that $\emptyset\neq\supp(y^\trans A')\subset I'\setminus\cF(A')$ and $\emptyset\neq\supp(z^\trans A')\subset I''\setminus\cF(A')$.
	Since $y_{M+1},z_{M+1}\neq0$, there exists $\zeta\in\FF_q\setminus\{0\}$ such that $y_{M+1}+\zeta z_{M+1}=0$.
	Hence,
	\begin{align*}
		\emptyset\neq\supp(((y_1\cdots y_M)+\zeta(z_1\cdots z_M))^\trans A)\subset (I'\cup I'')\setminus\cF(A)\quad\mbox{and}\quad((y_1\cdots y_M)+\zeta(z_1\cdots z_M))^\trans A\neq0.
	\end{align*}
	Consequently, $I'\cup I''$ is a proper relation of $A$ of size $2\ell'$.
	Thus, $A$ possesses at least $(\delta'\binom N{\ell'})^2/8$ such proper relations.
	However, choosing $N_0$ large enough, we obtain $(\delta'\binom N{\ell'})^2/8>\delta\binom{N}\ell$, in contradiction to the fact that $A$ is $(\delta,\ell)$-free.

	Concerning (ii), let $j,j'\in\cF(A')\setminus\cF(A)$ be two distinct indices that are frozen in $A'$ but not in $A$.
	Then there exist vectors $y,z\in\FF_q^{M+1}$ such that $\supp(y^\trans A')=\{j\}$ and $\supp(z^\trans A')=\{j'\}$.
	Since $j,j'\not\in\cF(A)$ we have $y_{M+1}\neq 0\neq z_{M+1}$.
	Hence, there exists $\zeta\in\FF_q\setminus\{0\}$ such that $y_{M+1}+\zeta z_{M+1}=0$.
	Moreover,
	\begin{align*}
		\supp(((y_1\cdots y_M)+\zeta(z_1\cdots z_M))^\trans A)=\{j,j'\}.
	\end{align*}
	Thus, $\{j,j'\}$ is a proper relation of $A$.
	We therefore conclude that $A$ possesses at least $\binom{|\cF(A')\setminus\cF(A)|}2$ proper relations of size two.
	Consequently, $\binom{|\cF(A')\setminus\cF(A)|}2<\delta\binom N2$, whence the desired bound $|\cF(A')\setminus\cF(A)|<\delta'N$ follows.
\end{proof}

Repeated application of \Lem~\ref{lem_tinker} shows the following.

\begin{corollary}\label{cor_tinker}
	There exists $1\ll \omega'=\omega'_n\ll\omega=\omega_n$ such that the following is true.
	Suppose that $A$ is $(\omega,1/\omega)$-free and that $A'$ is obtained from $A$ by adding at most $\omega'$ rows.
	Then $A'$ is $(\omega',1/\omega')$-free and $|\cF(A')\setminus\cF(A)|\leq n/\omega'$.
\end{corollary}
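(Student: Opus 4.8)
The plan is to add the $t\le\omega'$ new rows to $A$ one at a time, forming a chain $A=A_0\to A_1\to\cdots\to A_t=A'$, and to propagate both conclusions along it by iterating \Lem~\ref{lem_tinker} (here read with the natural convention that ``$(\omega^{-1},\omega)$-free'' is the hypothesis and ``$(\omega'^{-1},\omega')$-free'' the conclusion, matching \Cor~\ref{cor_vhA}). The only real difficulty is that a single application of that lemma degrades the freeness parameters severely: its proof exhibits, for a target pair $(\delta',\ell')$, the required hypothesis pair $(\delta'^2 2^{-2\ell'-16},\,2\ell')$, so $\delta$ is squared and $\ell$ doubled each time, which means only a very slowly growing number of rows can be afforded. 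Accordingly I would fix any slowly diverging $\omega=\omega_n\to\infty$ (for the intended application $\omega=\lceil\log\log n\rceil$, as in \Cor~\ref{cor_vhA}) and take $\omega'=\omega'_n=\lfloor\tfrac13\log_2\log_2\omega\rfloor$, so that indeed $1\ll\omega'\ll\omega$ while $2^{\omega'}\le(\log_2\omega)^{1/3}$.

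For the freeness assertion I would run \Lem~\ref{lem_tinker} backwards. Starting from the desired output parameters $(\eta_t,m_t):=(\omega'^{-2},\omega')$ --- which is stronger than, hence implies, the asserted $(\omega'^{-1},\omega')$-freeness --- I define $(\eta_{j-1},m_{j-1})$ from $(\eta_j,m_j)$ by $m_{j-1}=2m_j$ and $\eta_{j-1}=\eta_j^2 2^{-2m_j-16}$, the explicit values furnished by the proof of \Lem~\ref{lem_tinker}. Unwinding yields $m_0=2^t m_t\le 2^{\omega'}\omega'$ and, with $u_j:=\log_2(1/\eta_j)$ satisfying $u_{j-1}=2u_j+2m_j+16$, the bound $u_0\le 2^t\bc{u_t+t m_t+16}=O\bc{2^{\omega'}\omega'^2}$. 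By the choice of $\omega'$ both $m_0=o(\omega)$ and $u_0=o(\log_2\omega)$, so for $n$ large we get $m_0\le\omega$ and $\eta_0\ge1/\omega$. Since $(\delta,\ell)$-freeness only weakens when $\delta$ grows or $\ell$ shrinks, the hypothesis that $A$ is $(\omega^{-1},\omega)$-free forces $A=A_0$ to be $(\eta_0,m_0)$-free, and then $t$ successive applications of \Lem~\ref{lem_tinker}(i) make each $A_j$ --- in particular $A_t=A'$ --- $(\eta_j,m_j)$-free; thus $A'$ is $(\omega'^{-2},\omega')$-free, and a fortiori $(\omega'^{-1},\omega')$-free. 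I would also check that the size threshold $N_0$ of \Lem~\ref{lem_tinker} is honoured at every step: its proof shows $N_0$ may be taken polynomial in $\ell'$ and $1/\delta'$, and all parameters arising satisfy $m_j\le m_0=o(\omega)$ and $\eta_j\ge\eta_0\ge1/\omega$, so $N_0=\mathrm{poly}(\omega)=o(n)$ and all $t+1\le\omega'+1$ invocations are legitimate.

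For the bound on the frozen set I would use that adjoining a row can only enlarge $\cF$, so $\cF(A_0)\subseteq\cdots\subseteq\cF(A_t)$ and $\cF(A')\setminus\cF(A)=\bigcup_{j=1}^t\bc{\cF(A_j)\setminus\cF(A_{j-1})}$. Applying \Lem~\ref{lem_tinker}(ii) to the step $A_{j-1}\to A_j$, whose target freeness parameter is $\eta_j$, gives $|\cF(A_j)\setminus\cF(A_{j-1})|<\eta_j n$. Because the recursion makes $\eta_1<\eta_2<\cdots<\eta_t=\omega'^{-2}$, summing produces $|\cF(A')\setminus\cF(A)|<\sum_{j=1}^t\eta_j n<t\,\omega'^{-2}n\le\omega'^{-1}n=n/\omega'$, the claimed estimate.

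The hard part --- and the reason the statement separates the scales $\omega'\ll\omega$ --- is precisely the doubly-exponential-in-$t$ explosion of the input parameter $1/\eta_0$ caused by the repeated squaring of $\delta$: the chain can be seeded from a $(\omega^{-1},\omega)$-free matrix only when $2^t\,\mathrm{poly}(\omega')\lesssim\log_2\omega$, i.e.\ only when the number of added rows is at most about $\log_2\log_2\omega$. Once $\omega'$ is chosen small enough to satisfy this, everything else is a mechanical unwinding of the two recursions plus a geometric-type summation.
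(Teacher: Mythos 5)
Your proposal is correct and proceeds exactly as the paper intends — the paper's own proof consists of a single sentence, ``Repeated application of \Lem~\ref{lem_tinker} shows the following,'' and you have simply carried out the row-by-row iteration and the attendant parameter bookkeeping explicitly. You correctly read the paper's $(\omega,1/\omega)$ and $(\omega',1/\omega')$ as the $(\delta,\ell)$-pairs $(\omega^{-1},\omega)$ and $(\omega'^{-1},\omega')$ (consistent with \Cor~\ref{cor_vhA} and with the definition, where the first coordinate is the small density parameter and the second the relation length); your backward recursion $m_{j-1}=2m_j$, $\eta_{j-1}=\eta_j^2\,2^{-2m_j-16}$ matches the explicit constants in the proof of \Lem~\ref{lem_tinker}; the estimates $m_0=2^t m_t$ and $u_0\leq 2^t(u_t+tm_t+16)$ are correct, and the choice $\omega'=\lfloor\tfrac13\log_2\log_2\omega\rfloor$ indeed makes $m_0=o(\omega)$ and $1/\eta_0=\omega^{o(1)}\leq\omega$; seeding at $\eta_t=\omega'^{-2}$ is exactly what is needed so that $\sum_{j\le t}\eta_j<t\omega'^{-2}\leq 1/\omega'$ together with monotonicity of $\cF$ gives the frozen-set bound; and the check that $N_0=\mathrm{poly}(m_j,1/\eta_j)\leq\mathrm{poly}(\omega)=o(n)$ at every step is the right thing to verify and is done correctly.
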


\subsection{Proof of \Prop s~\ref{prop_vhA}}\label{sec_prop_vhA}
\Prop~\ref{prop_vhA} posits that the standard WP messages from \eqref{eqWPdef} are an approximate fixed point of the update rule \eqref{eqWPupdate1} and that the labels defined in \eqref{eqWPmarks1}--\eqref{eqWPmarks2} match their intended semantics.
The starting point of the proof is that the distribution of the random matrix $\vhA$ remains asymptotically invariant under the following resampling operation.

\begin{fact}\label{fact_astar}
	Let $\vA^+$ be the matrix obtained from $\vhA$ via the following operation.
	\begin{align*}
		\parbox{14cm}{Choose a variable node $\vv\in\{v_1,\ldots,v_n\}$ randomly, then independently for all $a\in\partial_{\vhA}\vv$ resample the neighbours of $a$ other than $\vv$ uniformly without replacement from $\{v_1,\ldots,v_n\}\setminus\{\vv\}$.}
	\end{align*}
	Then $\vhA$ and $\vA^+$ are identically distributed. 
\end{fact}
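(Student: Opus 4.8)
The plan is to condition on enough of the randomness that the two matrices become equidistributed row by row, exploiting that the decisive conditioning event factorises over the rows. Throughout I regard $\vhA=\vA[\vt]$ as a deterministic function of $\vt$, of the i.i.d.\ uniform $k$-subsets $\ve_1,\dots,\ve_m\subseteq[n]$ recording the neighbourhoods of the genuine check nodes $a_1,\dots,a_m$, and of i.i.d.\ uniform positions $\vp_1,\vp_2,\dots\in[n]$ of the ones in the pinning rows (only the first $\vt$ of which are used), all mutually independent; the nonzero \emph{values} in a given row are prescribed (by $\fA$ for a genuine check, by $1$ for a pinning check) and so do not interact with the combinatorics. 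Since the resampling operation alters only which columns of a row are nonzero and never moves an entry to a different row, the very same function maps the resampled combinatorial data to $\vA^+$. Hence it suffices to prove $\vA^+\disteq\vhA$, where in addition $\vv$ is uniform on $\{v_1,\dots,v_n\}$ and independent of $\vhA$.

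First I would fix $t$, a variable $v$, and a set $S$ of check indices, and condition on $\cE=\cbc{\vt=t,\ \vv=v,\ \partial_{\vhA}(v)=S}$. The key observation is that $\cbc{\partial_{\vhA}(v)=S}$ is a \emph{product} event: it is the intersection, over genuine checks $a_i$, of $\cbc{v\in\ve_i}$ if $a_i\in S$ and $\cbc{v\notin\ve_i}$ otherwise, and over pinning checks of $\cbc{\vp_j=v}$ or $\cbc{\vp_j\neq v}$ according to membership in $S$. Because the $\ve_i$ and $\vp_j$ are mutually independent and independent of $\vv$, conditioning on $\cE$ preserves their mutual independence and acts on each row separately. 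Here I would invoke the elementary identity that a uniform $k$-subset of $[n]$ conditioned to contain $v$ has the law of $\cbc{v}$ together with a uniform $(k-1)$-subset of $[n]\setminus\cbc{v}$, and that a uniform element of $[n]$ conditioned to equal $v$ is the constant $v$.

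Given $\cE$, then, in $\vhA$ the rows indexed by $S$ are conditionally independent, a genuine check in $S$ having neighbourhood $\cbc{v}$ plus a uniform $(k-1)$-subset of $[n]\setminus\cbc{v}$ and a pinning check in $S$ having its unique neighbour equal to $v$, while every row outside $S$ keeps its original law conditioned to avoid $v$. In $\vA^+$ under the same conditioning: the rows outside $S$ are untouched, hence carry exactly that same conditional law; a genuine check in $S$ is re-drawn as $\cbc{v}$ plus a fresh uniform $(k-1)$-subset of $[n]\setminus\cbc{v}$, which by the identity above is precisely its $\vhA$-conditional law; and a pinning check in $S$ has no other neighbours to resample, so it stays at $v$. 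All rows remain conditionally independent since the fresh draws are independent of everything, so $\vA^+$ and $\vhA$ have the same conditional law given $\cE$, which—using that $\vv$ is independent of $\vhA$—is just the conditional law of $\vhA$ given $\cbc{\vt=t,\ \partial_{\vhA}(v)=S}$. Finally, since $\partial_{\vA^+}(v)=\partial_{\vhA}(v)$ by construction, both matrices assign mass zero to any target inconsistent with $\partial(v)=S$, so averaging over $S$ with the weights $\Pr\brk{\partial_{\vhA}(v)=S\mid\vt=t}$, then over the uniform $v$ and over $t$, yields $\vA^+\disteq\vhA$. The only points requiring care are the factorisation of $\cbc{\partial_{\vhA}(v)=S}$ across rows—which is what makes conditional independence survive—and the degenerate bookkeeping for the pinning checks; I do not anticipate a genuine obstacle.
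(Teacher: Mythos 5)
Your proof is correct, and it supplies exactly the argument that the paper leaves implicit (the statement is labelled a \emph{Fact} and given without proof). The key points you identify are the right ones: reduce to the support data because the nonzero values are deterministic functions of row and column index; observe that $\{\partial_{\vhA}(v)=S\}$ is a product event over the independent rows, so conditioning on it preserves row independence; note that a uniform $k$-subset of $[n]$ conditioned to contain $v$ equals $\{v\}$ plus a fresh uniform $(k-1)$-subset of $[n]\setminus\{v\}$, which is precisely what the resampling draws; handle the degenerate pinning rows (nothing to resample) separately; and use the independence of $\vv$ from $\vhA$ together with $\partial_{\vA^+}\vv=\partial_{\vhA}\vv$ to collapse the average over $(v,S,t)$ back to the unconditional law of $\vhA$. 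There is no gap; you have simply written out the one-line ``row-exchangeability of neighbourhoods conditioned to meet $v$'' argument in full.
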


To establish the fixed point property \eqref{eqprop_vhAi} we are going to show that
\begin{align}\label{eqprop_vhA_ii1}
	\fm_{\vv\to a}(\vA^+)=\hat\fm_{\vv\to a}(\vA^+)\qquad\mbox{for all $a\in\partial_{\vA^+}\vv$ \whp;}
\end{align}
then Markov's inequality implies that $\sum_{j=1}^n\sum_{a\in\partial_{\vhA}v_j}\vecone\{\fm_{v_j\to a}(\vhA)\neq\hat\fm_{v_j\to a}(\vhA)\}=o(n)$ \whp\
More specifically, we are going to exhibit an event $\cE$ with $\pr\brk\cE\sim1$ such that \eqref{eqprop_vhA_ii1} holds on $\cE$ deterministically.

To define the event $\cE$ pick a sequence $\Delta=\Delta(n)\gg1$ that diverges slowly enough as $n\to\infty$.
Moreover, obtain $\vA^-$ from $\vA^+$ by deleting all checks $a\in\partial_{\vA^+}\vv$.
Now, let $\cE$ be the event that the three following conditions hold.
\begin{description}
	\item[E1] The second neighbourhood $\partial_{\vA^+}^2\vv=\{v_j:\exists a\in\partial_{\vA^+}\vv:v_j\in\partial_{\vA^+}a\}\setminus\{\vv\}$ has size precisely $(k-1)|\partial_{\vA^+}\vv|\leq\Delta$.
	\item[E2] $\partial_{\vA^+}^2\vv$ is not a proper relation of $\vA^-$.
	\item[E3] For all $a\in\partial_{\vA^+}\vv$ we have $\cF(\vA^+\setminus\{a\})\cap\partial_{\vA^+}a\setminus\{\vv\}=\cF(\vA^-)\cap\partial_{\vA^+}a\setminus\{\vv\}$.
\end{description}

\begin{claim}\label{claim_E}
	We have $\pr\brk{\cE}=1-o(1)$.
\end{claim}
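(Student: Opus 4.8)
The plan is to bound the probabilities that each of \textbf{E1}, \textbf{E2}, \textbf{E3} fails and then apply a union bound, exploiting the fact that after the resampling of Fact~\ref{fact_astar} the neighbours of the checks $a\in\partial_{\vA^+}\vv$ are fresh uniform samples. First I would handle \textbf{E1}. The variable degree $d_{\vhA}(\vv)$ is asymptotically $\Po(d)$ by Fact~\ref{fact_degs}, so $\pr\brk{d_{\vhA}(\vv)>\Delta^{1/2}}=o(1)$ for any slowly diverging $\Delta$; conditionally on $|\partial_{\vA^+}\vv|\leq\Delta^{1/2}$, the resampled neighbours of the $\leq\Delta^{1/2}$ checks adjacent to $\vv$ are $(k-1)|\partial_{\vA^+}\vv|\leq\Delta$ uniform draws (without replacement within each check, across checks from $\{v_1,\dots,v_n\}\setminus\{\vv\}$), so the probability that any two of these $O(\Delta)$ draws collide, or collide with each other to decrease $|\partial^2_{\vA^+}\vv|$ below $(k-1)|\partial_{\vA^+}\vv|$, is $O(\Delta^2/n)=o(1)$. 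This gives $\pr\brk{\textbf{E1}}=1-o(1)$.

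Next, for \textbf{E2} I would invoke Corollary~\ref{cor_vhA}: \whp\ $\vhA$ is $(\omega^{-1},\omega)$-free with $\omega=\ceil{\log\log n}$. The matrix $\vA^-$ is obtained from $\vhA$ (equivalently $\vA^+$, which is identically distributed) by deleting the $\leq\Delta^{1/2}$ checks in $\partial_{\vA^+}\vv$; deleting rows only removes relations, so $\vA^-$ is still $(\omega^{-1},\omega)$-free \whp\ On \textbf{E1} the set $\partial^2_{\vA^+}\vv$ has size $(k-1)|\partial_{\vA^+}\vv|\leq\Delta\leq\omega$ for $\Delta$ chosen to diverge slowly enough (say $\Delta\leq\log\log\log n$). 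Now the key point is that $\partial^2_{\vA^+}\vv$ is a \emph{uniformly random} set of that size among subsets of $[n]\setminus\{\vv\}$ of the appropriate form, and it is chosen independently of $\vA^-$ (the resampling only affects the deleted checks). Since $\vA^-$ has fewer than $\omega^{-1}\binom{n}{h}$ proper relations of each size $h\leq\omega$, and there are $\binom{n}{(k-1)|\partial_{\vA^+}\vv|}(1+o(1))$ possible outcomes for $\partial^2_{\vA^+}\vv$, the probability that the random set $\partial^2_{\vA^+}\vv$ happens to be one of the few proper relations is at most $\omega^{-1}(1+o(1))=o(1)$. Hence $\pr\brk{\textbf{E2}^c}=o(1)$.

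For \textbf{E3}, fix $a\in\partial_{\vA^+}\vv$ and compare $\vA^+\setminus\{a\}$ with $\vA^-=\vA^+\setminus\partial_{\vA^+}\vv$: the former is obtained from the latter by adding back the at most $|\partial_{\vA^+}\vv|-1\leq\Delta$ checks in $\partial_{\vA^+}\vv\setminus\{a\}$. By Corollary~\ref{cor_tinker} (repeated application of Lemma~\ref{lem_tinker}), since $\vA^-$ is $(\omega^{-1},\omega)$-free \whp\ and we add at most $\omega'$ rows with $\omega'$ diverging slowly enough, we get $|\cF(\vA^+\setminus\{a\})\setminus\cF(\vA^-)|\leq n/\omega'$. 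But we want more: that \emph{none} of the $\leq k-1$ variables in $\partial_{\vA^+}a\setminus\{\vv\}$ lies in this symmetric difference. Here I would again use independence: conditionally on \textbf{E1}, the neighbours $\partial_{\vA^+}a\setminus\{\vv\}$ are uniform over $[n]\setminus\{\vv\}$ and independent of the set $\cF(\vA^+\setminus\{a\})\triangle\cF(\vA^-)$ (which depends only on the other checks), so the probability that one of these $O(1)$ uniform variables falls into a set of size $\leq n/\omega'$ is $O(1/\omega')=o(1)$; a union bound over the $\leq\Delta$ checks $a$ keeps this $o(1)$. Note $\cF(\vA^-)\subseteq\cF(\vA^+\setminus\{a\})$ automatically since $\vA^+\setminus\{a\}$ has more rows, so only the one-sided containment needs work.

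The main obstacle I expect is making the independence in \textbf{E2} and \textbf{E3} fully rigorous: one must be careful that, after conditioning on the degree of $\vv$ and on the identities of the checks in $\partial_{\vA^+}\vv$, the resampled second-neighbourhood variables are genuinely uniform and independent of the ``frozen-set difference'' events, which live entirely on the un-resampled part $\vA^-$. This is where the clean formulation of the resampling in Fact~\ref{fact_astar} pays off, but it requires a short careful argument rather than a one-liner. Everything else is a routine union bound over the $o(1)$-probability failure events, combined with Corollaries~\ref{cor_vhA} and~\ref{cor_tinker} and Fact~\ref{fact_degs}.
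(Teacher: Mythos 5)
Your overall structure matches the paper's proof: \textbf{E1} via Fact~\ref{fact_degs} and acyclicity of the resampled second neighbourhood, \textbf{E2} via a union bound against the sparsity of proper relations, \textbf{E3} via Corollary~\ref{cor_tinker} combined with the conditional independence of $\partial_{\vA^+}a$. However, there is a genuine gap in the key step where you assert that $\vA^-$ is $(\omega^{-1},\omega)$-free because ``deleting rows only removes relations.'' That monotonicity is correct for plain relations, but $(\delta,\ell)$-freeness counts \emph{proper} relations, and these do not decrease under row deletion: removing rows shrinks the frozen set $\cF(\cdot)$, which can promote an improper relation to a proper one. For a concrete counterexample over $\FF_2$, take $A' = \left(\begin{smallmatrix}1&1\\1&0\end{smallmatrix}\right)$ and delete the second row to get $A=(1\ 1)$. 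Then $\{1,2\}$ is a proper relation of $A$ (since $\cF(A)=\emptyset$) but not of $A'$ (since $\cF(A')=\{1,2\}$). So deleting rows from $\vhA$ can in principle manufacture many new proper relations, and the step does not go through as written. Since both your \textbf{E2} argument and your \textbf{E3} argument (via Corollary~\ref{cor_tinker} applied to $\vA^-$) rely on $\vA^-$ being free, this gap propagates to both.

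The repair is to get freeness of $\vA^-$ directly from Lemma~\ref{lem_pinning} rather than by inheritance. Observe that w.h.p.\ none of the $\vt\leq T=\lceil\log n\rceil$ unary pinning rows is incident to the random variable $\vv$, so $\vA^-=\vhA\setminus\partial_{\vhA}\vv$ still contains all $\vt$ pinning rows, placed independently and uniformly; moreover $\vA^-$ minus those rows is $\vA\setminus\partial_{\vA}\vv$, which is independent of the pinning randomness. Conditioning on $\vA$ and $\vv$ and applying Lemma~\ref{lem_pinning} to this matrix then yields that $\vA^-$ is $(\delta,\ell)$-free w.h.p.\ for slowly diverging $\ell$ and slowly vanishing $\delta$. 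This is also what the paper's argument amounts to (it invokes Lemma~\ref{lem_pinning} on the pinned matrix rather than relying on any monotonicity under row deletion). With this fix, the rest of your union-bound and conditional-independence reasoning for \textbf{E1}--\textbf{E3} is sound and is essentially the argument in the paper.
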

\begin{proof}
	Condition {\bf E1} asks that $\vv$ have degree at most $\Delta/(k-1)$ and that the subgraph of $G(\vhA)$ induced by the vertices of distance at most two from $\vv$ be acyclic.
	Fact~\ref{fact_degs}, Fact~\ref{fact_astar} and the independence of the positions of the non-zero entries in the different rows of $\vhA$ imply that this is indeed the case \whp\
	Moreover, {\bf E1} and the construction of $\vA^+$ ensure that $\partial_{\vA^+}^2\vv$ is nothing but a random set of variable nodes of $G(\vA^-)$ of size at most $\Delta$.
	Since $\vA^+$ contains the same $\vt$ rows with ones in random positions that we added to $\vhA$ by way of the pinning operation, \Lem~\ref{lem_pinning} shows that $\vA^+$ is $(\omega,1/\omega)$-free with probability $1-o(1/\omega)$ for a certain $\omega\gg1$.
	Consequently, $\partial_{\vA^+}^2\vv$ is not a proper relation of $\vA^+$ \whp, provided that $1\ll\Delta\ll\omega$ diverges sufficiently slowly.
	Hence, {\bf E2} holds \whp\
	Finally, $\vA^+\setminus\{a\}$ is obtained from $\vA^-$ by adding at most $\Delta$ rows.
	Therefore, {\bf E2} and \Cor~\ref{cor_tinker} imply that {\bf E3} is satisfied \whp, once again providing that $\Delta\to\infty$ sufficiently slowly.
\end{proof}

The following two claims deliver \eqref{eqprop_vhA_ii1}.

\begin{claim}\label{claim_mhatm1}
	Assume that $\cE$ occurs and let $a\in\partial_{\vA^+}\vv$.
	If there exists $b\in\partial_{\vA^+}\vv\setminus\{a\}$ such that $\fm_{b\to\vv}(\vA^+)=\frozen$, then $\fm_{\vv\to a}(\vA^+)=\frozen$.
	Moreover, if $\fm_{\vv}(\vA^+)\neq\unfrozen$, then $\vv\in\cF(\vA^+)$.
\end{claim}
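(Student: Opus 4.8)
The plan is to read both assertions straight off the defining formulas \eqref{eqWPdef} and the label rule \eqref{eqWPmarks1}, the only non-formal ingredient being the elementary monotonicity of the frozen set under the addition of rows: if $B$ is a row-submatrix of $B'$ (same columns, fewer rows), then $\ker B'\subseteq\ker B$ and hence $\cF(B)\subseteq\cF(B')$. I would flag this monotonicity at the outset and then apply it twice.

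For the first assertion I would first unpack the hypothesis. By \eqref{eqWPdef}, $\fm_{b\to\vv}(\vA^+)=\frozen$ says exactly that $\vv\in\cF(\vA^+\setminus(\partial_{\vA^+}\vv\setminus\{b\}))$, i.e.\ $\vv$ is frozen in the matrix obtained from $\vA^+$ by deleting every check adjacent to $\vv$ except $b$. Since $a\in\partial_{\vA^+}\vv$ and $a\neq b$, the check $a$ is among those deleted, so $\vA^+\setminus(\partial_{\vA^+}\vv\setminus\{b\})$ arises from $\vA^+\setminus\{a\}$ by deleting the further rows $\partial_{\vA^+}\vv\setminus\{a,b\}$; in particular it is a row-submatrix of $\vA^+\setminus\{a\}$. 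Monotonicity then gives $\vv\in\cF(\vA^+\setminus\{a\})$, which by \eqref{eqWPdef} is precisely $\fm_{\vv\to a}(\vA^+)=\frozen$. For the \enquote{moreover} part I would argue the same way: $\fm_{\vv}(\vA^+)\neq\unfrozen$ means, by \eqref{eqWPmarks1}, that some $a\in\partial_{\vA^+}\vv$ has $\fm_{a\to\vv}(\vA^+)=\frozen$, i.e.\ $\vv\in\cF(\vA^+\setminus(\partial_{\vA^+}\vv\setminus\{a\}))$; the matrix on the right is again a row-submatrix of $\vA^+$ (it only discards checks adjacent to $\vv$ other than $a$), so monotonicity yields $\vv\in\cF(\vA^+)$.

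I do not expect a real obstacle in this particular claim: it records only the \enquote{easy} half of the Warning Propagation fixed-point identity \eqref{eqprop_vhA_ii1}, namely that an incoming $\frozen$ message forces the outgoing message to be $\frozen$ as well (cf.\ the update rule \eqref{eqWPupdate1}), and this holds for purely structural reasons with no appeal to $\cE$. The role of $\cE$---and in particular of \textbf{E2} and \textbf{E3}---is reserved for the companion claim, which must establish the reverse direction: when \emph{no} check in $\partial_{\vA^+}\vv\setminus\{a\}$ sends a $\frozen$ message, the message $\fm_{\vv\to a}(\vA^+)$ is genuinely $\unfrozen$, and correspondingly $\vv$ is not frozen. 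There the acyclicity of the second neighbourhood from \textbf{E1}, the fact that $\partial_{\vA^+}^2\vv$ is not a proper relation of $\vA^-$, and the identification of $\cF(\vA^+\setminus\{a\})$ with $\cF(\vA^-)$ on $\partial_{\vA^+}a\setminus\{\vv\}$ will be what makes the argument run, so I would keep those facts in reserve rather than here.
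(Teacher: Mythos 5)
Your proof is correct, and it takes a genuinely different — and noticeably more elementary — route than the paper's. You deduce both assertions purely from the one-line monotonicity fact $\cF(B)\subseteq\cF(B')$ whenever $B'$ is obtained from $B$ by adding rows: for the first assertion you observe that, by \eqref{eqWPdef}, the hypothesis $\fm_{b\to\vv}(\vA^+)=\frozen$ reads $\vv\in\cF(\vA^+\setminus(\partial_{\vA^+}\vv\setminus\{b\}))$, and since $\partial_{\vA^+}\vv\setminus\{b\}\supseteq\{a\}$ the matrix $\vA^+\setminus(\partial_{\vA^+}\vv\setminus\{b\})$ is a row-submatrix of $\vA^+\setminus\{a\}$, so monotonicity hands you $\vv\in\cF(\vA^+\setminus\{a\})$, i.e.\ $\fm_{\vv\to a}(\vA^+)=\frozen$; the second assertion runs the same way relative to $\vA^+$ itself. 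By contrast, the paper's proof goes through the event $\cE$: it invokes \textbf{E3} to push the $\frozen$-message at $b$ down to the conclusion that all of $b$'s other variable neighbours lie in $\cF(\vA^-)$, and then uses the inclusion $\ker(\vA^+\setminus\{a\})\subseteq\ker(\vA^-\setminus\{a\})$ together with the constraint imposed by row $b$ to force $\sigma_{\vv}=0$. Your version is cleaner on two counts: it does not touch $\cF(\vA^-)$ and does not need any of \textbf{E1}--\textbf{E3} (indeed, you correctly flag that the assumption ``$\cE$ occurs'' is vacuous for this particular claim — it is genuinely only the converse Claims~\ref{claim_mhatm2} and~\ref{claim_mhatF2}, where one must produce a nonzero kernel vector with $\sigma_{\vv}\neq0$, that require the freeness/independence structure of $\cE$). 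The one thing to note is that the paper's route, while heavier for this claim, is phrased so as to parallel the proofs of the three sibling claims; your monotonicity shortcut is specific to the two ``positive'' implications (freezing propagates forward) and would not carry over to the ``negative'' ones, which is exactly why you are right to keep \textbf{E1}--\textbf{E3} in reserve.
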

\begin{proof}
	Let $b\in\partial_{\vA^+}\vv\setminus\{a\}$ be such that $\fm_{b\to\vv}(\vA^+)=\frozen$.
	Then {\bf E3} guarantees that $y\in\cF(\vA^-)$ for all $y\in\partial_{\vA^+}b\setminus\{\vv\}$.
	Therefore, for all $\sigma\in\ker(\vA^+\setminus\cbc{a})\subset\ker(\vA^-\setminus\{a\})$ and all $y\in\partial_{\vA^+}b\setminus\{\vv\}$ we have $\sigma_y=0$, and consequently $\sigma_{\vv}=0$.
	Hence, $\vv\in\cF(\vA^+\setminus\{a\})$, and thus $\fm_{\vv\to a}(\vA^+)=\hat\fm_{\vv\to a}(\vA^+)=\frozen$ by \eqref{eqWPdef}.
	A similar argument yields the second assertion.
\end{proof}

\begin{claim}\label{claim_mhatm2}
	Assume that $\cE$ occurs and let $a\in\partial_{\vA^+}\vv$.
	If $\fm_{b\to\vv}(\vA^+)=\unfrozen$ for all $b\in\partial_{\vA^+}\vv\setminus\{a\}$, then $\fm_{\vv\to a}(\vA^+)=\unfrozen$.
	Moreover, if $\fm_{\vv}(\vA^+)=\unfrozen$, then $\vv\not\in\cF(\vA^+)$.
\end{claim}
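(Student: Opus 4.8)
The plan is to mirror the proof of Claim~\ref{claim_mhatm1}, the difference being that now we must \emph{construct} a vector $\sigma\in\ker(\vA^+\setminus\{a\})$ with $\sigma_{\vv}\neq0$; by \eqref{eqWPdef} this is precisely the assertion $\fm_{\vv\to a}(\vA^+)=\unfrozen$, and then $\fm_{\vv\to a}(\vA^+)=\hat\fm_{\vv\to a}(\vA^+)$ follows from \eqref{eqWPupdate1} and the hypothesis. We work on $\cE$ and write $Y_b=\partial_{\vA^+}b\setminus\{\vv\}$ for $b\in\partial_{\vA^+}\vv$. On $\cE$ condition {\bf E1} forces $\vv$ to carry no pinning row, so every $b\in\partial_{\vA^+}\vv$ is an ordinary check with $|Y_b|=k-1\geq2$, the sets $(Y_b)_b$ are pairwise disjoint with union $\partial_{\vA^+}^2\vv$, and the $\vv$-column of $\vA^-$ vanishes (all check-rows at $\vv$ are deleted in forming $\vA^-$; equivalently $\vA^+\setminus\{a\}$ is $\vA^-$ with the rows $\partial_{\vA^+}\vv\setminus\{a\}$ added back). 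Unravelling \eqref{eqWPdef} gives $\fm_{b\to\vv}(\vA^+)=\frozen$ iff $\vv\in\cF(\vA^-\cup\{b\})$, the matrix in which $b$ is the unique check at $\vv$; consequently $Y_b\subseteq\cF(\vA^-)$ implies $\fm_{b\to\vv}(\vA^+)=\frozen$, while conversely $\fm_{b\to\vv}(\vA^+)=\unfrozen$ produces some $\sigma^{(b)}\in\ker\vA^-$ with $\sum_{y\in Y_b}\vA^+_{b,y}\sigma^{(b)}_{y}=-\vA^+_{b,\vv}\sigma^{(b)}_{\vv}\neq0$.

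The heart of the matter is the linear-algebraic claim that the functionals $\Psi_b\colon\ker\vA^-\to\FF_q$ given by $\Psi_b(\sigma)=\sum_{y\in Y_b}\vA^+_{b,y}\sigma_y$, for $b\in\partial_{\vA^+}\vv\setminus\{a\}$, are linearly independent. Granting this, the joint map $\ker\vA^-\to\FF_q^{\partial_{\vA^+}\vv\setminus\{a\}}$ is onto, so we may pick $\sigma\in\ker\vA^-$ with $\Psi_b(\sigma)=\vA^+_{b,\vv}$ for every $b\in\partial_{\vA^+}\vv\setminus\{a\}$; replacing the $\vv$-coordinate of $\sigma$ by $-1$ keeps $\sigma$ in $\ker\vA^-$ (the $\vv$-column vanishes) and, since $\vv\notin Y_b$, makes every check $b\in\partial_{\vA^+}\vv\setminus\{a\}$ satisfied, so $\sigma\in\ker(\vA^+\setminus\{a\})$ and $\sigma_{\vv}\neq0$, as required. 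To prove the independence, suppose $\sum_b\lambda_b\Psi_b$ vanishes on $\ker\vA^-$. Then the vector $\xi$ with $\xi_y=\lambda_b\vA^+_{b,y}$ for $y\in Y_b$ and $\xi_y=0$ otherwise lies in $(\ker\vA^-)^\perp$, hence in the row space of $\vA^-$, and $\supp\xi\subseteq\bigcup_{\lambda_b\neq0}Y_b\subseteq\partial_{\vA^+}^2\vv$. Starting from a representation $\xi=w^\trans\vA^-$ and subtracting suitable multiples of the singleton-relation witnesses for the indices in $\supp\xi\cap\cF(\vA^-)$, we obtain a row combination of $\vA^-$ supported inside $\partial_{\vA^+}^2\vv\setminus\cF(\vA^-)$; by {\bf E2} this combination must be zero, so $\supp\xi\subseteq\cF(\vA^-)$ and therefore $Y_b\subseteq\cF(\vA^-)$ for every $b$ with $\lambda_b\neq0$. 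For any such $b$ the first paragraph yields $\fm_{b\to\vv}(\vA^+)=\frozen$, contradicting the hypothesis of the claim since $b\neq a$; hence all $\lambda_b=0$.

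For the `moreover' statement, $\fm_{\vv}(\vA^+)=\unfrozen$ means by \eqref{eqWPmarks1} that $\fm_{b\to\vv}(\vA^+)=\unfrozen$ for \emph{every} $b\in\partial_{\vA^+}\vv$, i.e.\ $Y_b\not\subseteq\cF(\vA^-)$ for all $b\in\partial_{\vA^+}\vv$; running the argument of the previous paragraph with the full index set $\partial_{\vA^+}\vv$ in place of $\partial_{\vA^+}\vv\setminus\{a\}$ produces $\sigma\in\ker\vA^+$ with $\sigma_{\vv}\neq0$, whence $\vv\notin\cF(\vA^+)$. The step I expect to be the main obstacle is the linear-independence claim: one has to combine {\bf E2} with the incoming-message hypothesis to kill the degenerate case in which $\xi$ is supported entirely on $\cF(\vA^-)$, keeping track of the singleton relations used to clean $\xi$ up. This bookkeeping (indeed the whole rank computation) can alternatively be organised around Fact~\ref{fact_indep}, applied with $A=\vA^-$ restricted to the columns other than $\vv$, with $B$ the rows of $\vA^+\setminus\{a\}$ incident with $\vv$ restricted to those columns, and with $C$ their $\vv$-column.
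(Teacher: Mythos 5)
Your proof is correct, and it reaches the conclusion by a genuinely different route from the paper's. The paper conjugates $\vA^+\setminus\{a\}$ into the block form $\begin{pmatrix}\vA^-\setminus\{\vv\}&0\\B&C\end{pmatrix}$, verifies that the `cleaned' block $(B_0\ C)$ has full row rank (using {\bf E1} together with the existence of an unfrozen $u\in\partial_{\vA^+}b\setminus\{\vv\}$ for each $b$), invokes Fact~\ref{fact_indep} to compute the nullity, and then compares with the nullity after adding a row that pins $\vv$. You instead exhibit a vector of $\ker(\vA^+\setminus\{a\})$ with non-zero $\vv$-coordinate directly, by first showing that the functionals $\Psi_b$ restricted to $\ker\vA^-$ are linearly independent (so the map $\sigma\mapsto(\Psi_b(\sigma))_b$ is onto) and then freely adjusting $\sigma_\vv$, which is possible because the $\vv$-column of $\vA^-$ vanishes. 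The two arguments are essentially dual: your linear-independence claim for the $\Psi_b$ is the `bare hands' incarnation of the full-row-rank statement for $(B_0\ C)$, and the cleanup of $\xi$ via singleton-relation witnesses is exactly the mechanism Fact~\ref{fact_indep} packages up.

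Two small advantages of your version are worth noting. First, you work directly with the hypothesis $\fm_{b\to\vv}(\vA^+)=\unfrozen$: you only need the elementary implication that $Y_b\subset\cF(\vA^-)$ forces $\fm_{b\to\vv}(\vA^+)=\frozen$, which holds with no conditioning. The paper instead restates the hypothesis as ``for every $b$ there is $u\in\partial_{\vA^+}b\setminus\{\vv\}$ with $\fm_{u\to b}(\vA^+)=\unfrozen$''; translating the claim's actual hypothesis into that form needs {\bf E3}. Consequently your proof of Claim~\ref{claim_mhatm2} uses only {\bf E1} and {\bf E2}, not {\bf E3}. Second, your closing remark is right: the rank/linear-independence bookkeeping can be organised around Fact~\ref{fact_indep}, at which point you recover the paper's proof.
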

\begin{proof}
With $\Pi,\Pi'$ suitable permutation matrices (to reshuffle the rows and columns appropriately), $B$ a matrix of size $|\partial_{\vA^+}\vv\setminus\{a\}|\times(n-1)$ and $C$ a matrix of size $|\partial_{\vA^+}\vv\setminus\{a\}|\times1$, we can write
\begin{align}\label{eqclaim_mhatm2_0}
	\vA^+\setminus\{a\}&=\Pi\cdot\begin{pmatrix} \vA^-\setminus\{\vv\}&0\\ B&C \end{pmatrix}\cdot\Pi'.
\end{align}
Here the submatrix $(B\ C)$ corresponds to the checks $b\in\partial_{\vA^+}\vv\setminus\{a\}$, and the last column $\binom 0C$ represents $\vv$.
Obtain $B_0$ from $B$ by replacing the columns corresponding to variable nodes $v_i\neq\vv$ with $i\in\cF(\vA^-)$ by all-zero columns.

Now assume that $\cE$ occurs and that for every $b\in\partial_{\vA^+}\vv\setminus\{a\}$ there exists $u\in\partial_{\vA^+}b\setminus\{\vv\}$ such that $\fm_{u\to b}(\vA^+)=\unfrozen$.
In fact, let $U=\{u\in\partial^2_{\vA^+}\vv:\fm_{u\to b}(\vA^+)=\unfrozen\}$.
Then $U\cap\cF(\vA^-)=\emptyset$, because $\cF(\vA^-)\subset\cF(\vA^+\setminus\{b\})$ for every $b\in\partial_{\vA^+}\vv$.
Due to {\bf E1} for every column representing a variable $u\in U$ the $u$-column of $B_0$ contains precisely one non-zero entry.
Therefore,  $\rank(B_0\ C)=|\partial_{\vA^*}\vv\setminus\{a\}|$, i.e., the matrix $(B_0\ C)$ has full row rank.
Since {\bf E2} ensures that $\partial^2_{\vA^+}\vv$ is not a proper relation of $\vA^-$, Fact~\ref{fact_indep} shows that
\begin{align}\label{eqclaim_mhatm2_1}
		\nul\begin{pmatrix} \vA^-\setminus\{\vv\}&0\\ B&C \end{pmatrix}=\nul(\vA^+\setminus\{a,\vv\})-|\partial_{\vA^*}\vv\setminus\{a\}|-1.
	\end{align}
Similarly, we can compute the rank of the matrix obtained by adding one more row with a single $1$-entry in the last column, thereby expressly pinning $\vv$:
\begin{align}\label{eqclaim_mhatm2_2}
	\nul\begin{pmatrix} \vA^-\setminus\{\vv\}&0\\ B&C\\0&1 \end{pmatrix}=\nul(\vA^+\setminus\{a,\vv\})-|\partial_{\vA^*}\vv\setminus\{a\}|-2<\nul\begin{pmatrix} \vA^-\setminus\{\vv\}&0\\ B&C \end{pmatrix}.
\end{align}
Combining \eqref{eqclaim_mhatm2_1}--\eqref{eqclaim_mhatm2_2}, we conclude that the last coordinate $n$ that represents $\vv$ is unfrozen in $\begin{pmatrix} \vA^-\setminus\{\vv\}&0\\ B&C \end{pmatrix}$; for otherwise the nullities on the left and right of \eqref{eqclaim_mhatm2_2} would have been equal.
Hence, \eqref{eqclaim_mhatm2_0} shows that $\vv$ is unfrozen in $\vA^+\setminus\{a\}$.
Thus, $\fm_{\vv\to a}(\vA^+)=\unfrozen$ by \eqref{eqWPdef}.
A similar argument yields the second assertion.
\end{proof}

We proceed to investigate the check-to-variable messages.

\begin{claim}\label{claim_mhatF1}
	Assume that $\cE$ occurs and let $a\in\partial_{\vA^+}\vv$.
	If $\fm_{w\to a}(\vA^+)=\frozen$ for all $w\in\partial_{\vA^+}\vv\setminus\{a\}$, then $\fm_{a\to \vv}(\vA^+)=\frozen$.
\end{claim}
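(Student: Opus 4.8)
The plan is to unwind the definition~\eqref{eqWPdef} of the standard messages: the conclusion $\fm_{a\to\vv}(\vA^+)=\frozen$ is by definition the assertion that $\vv$ is frozen in the minor $\vA^\circ:=\vA^+\setminus(\partial_{\vA^+}\vv\setminus\{a\})$, i.e.\ in the matrix obtained from $\vA^+$ by deleting every check adjacent to $\vv$ apart from $a$. In $\vA^\circ$ the variable node $\vv$ has the unique check neighbour $a$, and by {\bf E1} the $k-1$ remaining variable neighbours $w\in\partial_{\vA^+}a\setminus\{\vv\}$ of $a$ are pairwise distinct and distinct from $\vv$. Hence it suffices to show that every $\sigma\in\ker\vA^\circ$ satisfies $\sigma_{\vv}=0$.

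First I would note that $\vA^\circ$ arises from $\vA^-=\vA^+\setminus\partial_{\vA^+}\vv$ simply by reinstating the single row $a$, so that $\ker\vA^\circ\subseteq\ker\vA^-$. Next, by~\eqref{eqWPdef} the hypothesis that the standard message into $a$ from each of its variable neighbours $w\neq\vv$ equals $\frozen$ means precisely that $w\in\cF(\vA^+\setminus\{a\})$ for all $w\in\partial_{\vA^+}a\setminus\{\vv\}$; invoking condition {\bf E3} I may transfer this to $w\in\cF(\vA^-)$ for every such $w$. Consequently any $\sigma\in\ker\vA^\circ\subseteq\ker\vA^-$ has $\sigma_w=0$ for every $w\in\partial_{\vA^+}a\setminus\{\vv\}$. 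Finally, $\sigma$ also satisfies the linear equation imposed by the restored row $a$, namely $\sum_{u\in\partial_{\vA^+}a}(\vA^+)_{au}\sigma_u=0$; since $\partial_{\vA^+}a=\{\vv\}\cup(\partial_{\vA^+}a\setminus\{\vv\})$ and the coefficient $(\vA^+)_{a\vv}$ is a non-zero element of $\FF_q$, the vanishing of all $\sigma_w$ with $w\neq\vv$ forces $\sigma_{\vv}=0$. Thus $\vv\in\cF(\vA^\circ)$, which gives $\fm_{a\to\vv}(\vA^+)=\frozen$.

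This direction is short and purely linear-algebraic; it needs neither {\bf E2} nor the nullity computation of Fact~\ref{fact_indep}, in contrast with Claim~\ref{claim_mhatm2}. The only point requiring care is the bookkeeping: one has to read $\fm_{a\to\vv}(\vA^+)$ through~\eqref{eqWPdef} as a freezing statement about the correct minor $\vA^\circ$, and then observe that on $\cE$ the frozen set $\cF(\vA^-)$ of a submatrix of $\vA^\circ$ already contains the $k-1$ other variables at $a$ — this is exactly what {\bf E3} buys us, letting us pass from $\vA^+\setminus\{a\}$, where the incoming messages live, to $\vA^-$. In effect this is the ``easy'' half of the check-update rule in~\eqref{eqWPupdate1}; the complementary implication (that $\fm_{a\to\vv}(\vA^+)=\unfrozen$ once some incoming message is $\unfrozen$) is the one that, as in Claim~\ref{claim_mhatm2}, will again call for a rank argument via Fact~\ref{fact_indep}.
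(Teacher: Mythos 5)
Your proof is correct and takes essentially the same route as the paper: translate the hypothesis via~\eqref{eqWPdef} into $w\in\cF(\vA^+\setminus\{a\})$, use \textbf{E3} to pass to $w\in\cF(\vA^-)$, and then conclude that $\vv$ is frozen in $\vA^+\setminus(\partial_{\vA^+}\vv\setminus\{a\})$. You even spell out the last step (that the equation of the reinstated row $a$ forces $\sigma_{\vv}=0$), which the paper's one-line finish glosses over, and you correctly read $\partial_{\vA^+}\vv\setminus\{a\}$ in the claim as the typo for $\partial_{\vA^+}a\setminus\{\vv\}$ that it evidently is.
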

\begin{proof}
	If $\fm_{w\to a}(\vA^+)=\frozen$, then $w\in\cF(\vA^+\setminus\{a\})$ by the definition \eqref{eqWPdef} of the standard messages.
	Hence, {\bf E3} guarantees that $w \in \cF(\vA^-)$ for all $w \in \partial_{\vA^+}\vv\setminus\{a\}$.
	Further, since $\cF(\vA^-) \subset \cF(\vA^-\setminus(\partial_{\vA^+}\vv\setminus\{a\}))$ we obtain from \eqref{eqWPdef} that $\fm_{a\to \vv}(\vA^+)=\frozen$. 
\end{proof}

\begin{claim}\label{claim_mhatF2}
	Assume that $\cE$ occurs and let $a\in\partial_{\vA^+}\vv$.
	If there exists $w\in\partial_{\vA^+}a\setminus\{\vv\}$ such that $\fm_{w\to a}(\vA^+)=\unfrozen$, then $\fm_{a\to \vv}(\vA^+)=\unfrozen$.
\end{claim}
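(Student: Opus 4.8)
The plan is to argue by contradiction, in the same spirit as the proof of Claim~\ref{claim_mhatm2}, but working directly with relations rather than invoking Fact~\ref{fact_indep}. Recall that $\fm_{a\to\vv}(\vA^+)=\unfrozen$ is equivalent to the statement that the singleton $\{\vv\}$ is \emph{not} a relation of the matrix $\vA^+\setminus(\partial_{\vA^+}\vv\setminus\{a\})$, which is exactly $\vA^-$ with the single row $a$ added back; call this matrix $B$. Observe that the column of $\vA^-$ indexed by $\vv$ is all-zero, since $\vA^-$ arises from $\vA^+$ by deleting every check incident with $\vv$. The first step is to translate the hypothesis: since $w\in\partial_{\vA^+}a\setminus\{\vv\}$ and $\fm_{w\to a}(\vA^+)=\unfrozen$ means $w\notin\cF(\vA^+\setminus\{a\})$, condition {\bf E3} yields $w\notin\cF(\vA^-)$.

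Now suppose for contradiction that $\{\vv\}$ is a relation of $B$, witnessed by a vector $y=(y',y_a)$, where $y'$ indexes the rows of $\vA^-$ and $y_a$ the row $a$, so that $\supp(y'^\trans\vA^-+y_a\cdot\mathrm{row}_a)=\{\vv\}$. Because the $\vv$-column of $\vA^-$ vanishes, the $\vv$-entry of this combination equals $y_a\vA^+_{a\vv}$, which forces $y_a\neq0$. Comparing coordinates one by one, I would then check that $\supp(y'^\trans\vA^-)$ is precisely $\partial_{\vA^+}a\setminus\{\vv\}$: outside $\partial_{\vA^+}a$ both terms vanish, at $\vv$ the $\vA^-$-part is zero, and for each $u\in\partial_{\vA^+}a\setminus\{\vv\}$ the full combination vanishes while the row-$a$ part does not, so $(y'^\trans\vA^-)_u=-y_a\vA^+_{au}\neq0$. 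In particular $y'^\trans\vA^-\neq0$ and its support contains $w$.

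The next step is a standard `purification' of this relation: as long as $\supp(y'^\trans\vA^-)$ meets $\cF(\vA^-)$, pick such a frozen coordinate $u_0$ (necessarily $u_0\neq w$), take a witness $z$ with $\supp(z^\trans\vA^-)=\{u_0\}$, and replace $y'$ by $y'+\zeta z$ for the unique scalar $\zeta\in\FF_q\setminus\{0\}$ cancelling the $u_0$-entry. Each such step removes one element of the support from $\cF(\vA^-)$ without altering the (nonzero) $w$-entry, so after at most $k-2$ steps we reach a nonzero linear combination of the rows of $\vA^-$ whose support is a nonempty subset of $(\partial_{\vA^+}a\setminus\{\vv\})\setminus\cF(\vA^-)$. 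Since $\partial_{\vA^+}a\setminus\{\vv\}\subset\partial_{\vA^+}^2\vv$, this exhibits $\partial_{\vA^+}^2\vv$ as a proper relation of $\vA^-$, contradicting {\bf E2}. Hence $\{\vv\}$ is not a relation of $B$, i.e.\ $\fm_{a\to\vv}(\vA^+)=\unfrozen$, as claimed.

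I expect the only mildly delicate points to be bookkeeping: verifying the exact support identity $\supp(y'^\trans\vA^-)=\partial_{\vA^+}a\setminus\{\vv\}$ — which crucially uses that the $\vv$-column of $\vA^-$ is zero and that $a$ is the only retained check incident with $\vv$ — and ensuring the purification never kills the combination, which is guaranteed precisely because $w\notin\cF(\vA^-)$ keeps $w$ in the support throughout. Everything else follows immediately from the definitions of frozen variable, (proper) relation, and the standard messages \eqref{eqWPdef}.
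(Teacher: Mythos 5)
Your proof is correct, and it takes a genuinely different technical route from the paper's. The paper applies Fact~\ref{fact_indep} to the block decomposition $\vA^+\setminus(\partial_{\vA^+}\vv\setminus\{a\})=\Pi\bigl(\begin{smallmatrix}\vA^-&0\\D&\chi\end{smallmatrix}\bigr)\Pi'$, compares the nullity with and without an extra pinning row on the $\vv$-column, and reads off that $\vv$ is unfrozen; the hypothesis on $w$ enters via the requirement $D_0\neq0$ so that the relevant rank increment is $2$ rather than $1$. You instead unpack exactly what Fact~\ref{fact_indep} hides: you assume for contradiction that $\{\vv\}$ is a relation of $B=\vA^+\setminus(\partial_{\vA^+}\vv\setminus\{a\})$, observe that the zero $\vv$-column of $\vA^-$ forces the row-$a$ coefficient to be non-zero and pins down $\supp(y'^\trans\vA^-)=\partial_{\vA^+}a\setminus\{\vv\}$, and then `purify' this relation by successively cancelling the frozen coordinates against singleton relations of $\vA^-$ — here $w\notin\cF(\vA^-)$ (which you obtain via {\bf E3}, the paper via the monotonicity inclusion $\cF(\vA^-)\subset\cF(\vA^+\setminus\{a\})$; both are fine) guarantees the support stays non-empty throughout — until you exhibit a proper relation of $\vA^-$ inside $\partial^2_{\vA^+}\vv$, contradicting {\bf E2}. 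Your version is more elementary and self-contained (it never touches nullities, only supports of linear combinations), and makes it very visible where each of the hypotheses {\bf E2}, {\bf E3} and the existence of $w$ is used; the paper's version is shorter on the page and uniform with the proof of Claim~\ref{claim_mhatm2}, because it amortises the linear algebra into the reusable Fact~\ref{fact_indep}.
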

\begin{proof}
	Let $w\in\partial_{\vA^+}a\setminus\{\vv\}$ be such that $\fm_{w\to a}(\vA^+)=\unfrozen$. 
	Then the definition \eqref{eqWPdef} of $\fm_{w\to a}(\vA^+)$ ensures that $w \notin \cF(\vA^+ \setminus\set{a})$.
	Since $\cF(\vA^-)\subset\cF(\vA^+\setminus\set{a})$, we conclude that $w \notin \cF(\vA^-)$.
	Further, for suitable permutation matrices $\Pi,\Pi'$ we obtain $D\in\FF_q^{n-1}$ and $\chi\in\FF_q\setminus\{0\}$ such that
	\begin{align}\label{eqclaim_mhatF2_0}
		\vA^+\setminus(\partial_{\vA^+}\vv\setminus\{a\})&=\Pi\cdot\begin{pmatrix} \vA^-&0\\ D&\chi \end{pmatrix}\cdot\Pi';
	\end{align}
	thus, the permutation matrices $\Pi,\Pi'$ are chosen such that they swap the $\vv$-column to the last column and the $a$-row to the last row.
	Hence, the last row $(D, \chi)$ represents $a$. 
	Now obtain $D_0$ from $D$ by replacing all entries corresponding to variable nodes from $\cF(\vA^+)\setminus\{\vv\}$ by $0$. 
	Then due to {\bf E2}, Fact~\ref{fact_indep} shows that
	\begin{align*}
		\nul\begin{pmatrix} \vA^-&0\\ D&\chi \end{pmatrix} &= \nul(\vA^-)&\mbox{and}&&
		\nul\begin{pmatrix} \vA^-&0\\ D&\chi \\ 0&1 \end{pmatrix} = 	\nul\begin{pmatrix} \vA^-&0\\ D&\chi \end{pmatrix} -1.
	\end{align*}
	Hence, as in the proof of Claim~\ref{claim_mhatm2} we obtain $\vv \notin \cF(\vA^+\setminus(\partial_{\vA^+}\vv\setminus\{a\}))$.
	Thus,  $\fm_{a \to \vv}(\vA^+) = \unfrozen$ by \eqref{eqWPdef}.
\end{proof}

\begin{proof}[Proof of \Prop~\ref{prop_vhA}]
Claims~\ref{claim_E}--\ref{claim_mhatF2} directly imply that
\begin{align*}
	\sum_{i=1}^m\sum_{j=1}^n\vecone\cbc{\fm_{v_j\to a_i}(\vhA)\neq\hat\fm_{v_j\to a_i}(\vhA)}&=o(n)&&\mbox{and}&
	\sum_{i=1}^m\sum_{j=1}^n\vecone\cbc{\fm_{a_i\to v_j}(\vhA)\neq\hat\fm_{a_i\to v_j}(\vhA)}&=o(n),
	\end{align*}
	whence we obtain \eqref{eqprop_vhAi}.
Similarly, \eqref{eqprop_vhAii} follows from Claims~\ref{claim_E}--\ref{claim_mhatm2}.

	Finally, in light of \eqref{eqprop_vhAii}, to prove \eqref{eqprop_vhAiii} it suffices to consider variables $v_j$ with $j\not\in\cF(\vhA)$.
	Hence, let $j,j'\in[n]\setminus\cF(\vhA)$ be two distinct indices such that $\{i,j\}$ is not a proper relation of $\vhA$; \Cor~\ref{cor_vhA} shows that this last property is violated for at most $o(n^2)$ pairs $j,j'$.
	Then the projection $\sigma\in\ker\vhA\mapsto(\sigma_j,\sigma_{j'})\in\FF_q^2$ is an epimorphism.
	Therefore, for any $s,t\in\FF_q^2$ we have $\abs{\{\sigma\in\ker\vhA:\sigma_i=s,\,\sigma_j=t\}}=q^{-2}|\ker\vhA|$.
	Consequently, if $\vhs\in\ker\vhA$ is drawn randomly, then for $(1-\vha+o(1))^2n^2$ pairs $j,j'\not\in\cF(\vhA)$ the random variables $\vhs_j,\vhs_{j'}$ are independent and uniformly distributed.
	Thus, Chebyshev's inequality shows that given $\cE$ for all $s\in\FF_q$ we have $$|\{j\in[n]\setminus\cF(\vhA):\vhs_j=s\}|=(1-\vha+o(1))|)n/q\qquad\mbox{ \whp,}$$ whence we obtain \eqref{eqprop_vhAiii}.
\end{proof}

\subsection{Proof of \Prop~\ref{prop_wp}}\label{sec_prop_wp}

The proof employs arguments broadly similar to those from the proof of \Prop~\ref{prop_vhA}.
The main difference is that we are going to consider a uniformly random pair $(\vv,\vv')$ of variable nodes, rather than a single variable node.
We begin by estimating the sizes $|\Delta_{z,\ell}(\fm_{\nix\to\nix}(\vhA))\times\Delta_{z',\ell'}(\fm_{\nix\to\nix}(\vhA))|$ for $z,z'\in\{\unfrozen,\slush,\frozen\}$, $\ell\in\cD(z)$ and $\ell'\in\cD(z')$.
Similarly as in \Sec~\ref{sec_prop_vhA} 
obtain $\vA^-$ from $\vhA$ by deleting all checks $a\in\partial_{\vhA}\vv\cup\partial_{\vhA}\vv'$.

\begin{fact}\label{fact_astar2}
	Let $\vA^+$ be the matrix obtained from $\vhA$ via the following operation.
	\begin{align*}
		\parbox{14cm}{Independently for all $a\in\partial_{\vhA}\vv\cup\partial_{\vhA}\vv'$ resample the neighbours of $a$ other than $\vv,\vv'$ uniformly without replacement from $\{v_1,\ldots,v_n\}\setminus\{\vv\}$.}
	\end{align*}
	Then $\vhA$ and $\vA^+$ have total variation distance $o(1)$.
\end{fact}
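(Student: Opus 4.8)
The plan is to follow the template of Fact~\ref{fact_astar}, whose single-variable resampling preserves the law of $\vhA$ \emph{exactly}; the only genuinely new feature here is the interaction of the two marked variables $\vv,\vv'$, and this is precisely what will cost the $o(1)$.

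First I would condition on $(\vv,\vv')=(v,v')$; since the pair is drawn independently of $\vhA$, the conditional law of $\vhA$ is unchanged. Write $R=\partial_{\vhA}v\cup\partial_{\vhA}v'$ for the set of checks that the operation touches. The structural facts to invoke are: the $m$ clause rows of $\vhA$ are mutually independent; the $\vt$ pinning rows have degree one, so a pinning row in $R$ consists of a single entry in column $v$ or $v'$ that the operation leaves in place; and, conditionally on the intersection pattern $(\ve_i\cap\{v,v'\})_i$ of the clause rows, the remainders $(\ve_i\setminus\{v,v'\})_i$ are independent, each uniform over the $(k-|\ve_i\cap\{v,v'\}|)$-subsets of $[n]\setminus\{v,v'\}$. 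Hence, were the new neighbours drawn from $[n]\setminus\{v,v'\}$, the operation would reproduce this conditional law exactly while leaving all rows outside $R$ untouched, so $\vA^+$ would then be distributed precisely as $\vhA$ and the total variation distance would vanish.

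Two effects spoil this exact identity, each only by $o(1)$. The first is a clause row incident with \emph{both} $v$ and $v'$: for such a row the operation conditions on $\{v,v'\}\subseteq\ve_a$, which is not one of the invariant conditionings above. The second is that the resampling domain $\{v_1,\ldots,v_n\}\setminus\{\vv\}$ still contains $\vv'$, so a resampled row incident with $v$ but not $v'$ has probability $O(k/n)$ of accidentally acquiring $v'$, and a resampled row incident with $v'$ but not $v$ has the same chance of drawing $v'$ a second time and thereby losing a coordinate. Let $\cG$ be the event that no clause row of $\vhA$ is incident with both $v$ and $v'$ and that no resampled row picks up $v'$. The first event has probability at most $m\binom{n-2}{k-2}/\binom{n}{k}=O(1/n)$ for every pair $(v,v')$, while the probability of the second is at most $\Erw[|R|\mid v,v']\cdot(k-1)/(n-1)=O(1/n)$, using that the degree of a fixed variable node has mean $d+o(1)$ (cf.\ Fact~\ref{fact_degs}); hence $\Pr[\cG^{c}\mid v,v']=O(1/n)$ uniformly in $(v,v')$.

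On $\cG$ the operation reduces exactly to assigning independently to each $a\in R$ a fresh uniform $(k-|\ve_a\cap\{v,v'\}|)$-subset of $[n]\setminus\{v,v'\}$, which is the conditional law of $\ve_a\setminus\{v,v'\}$; writing out the probabilities one then checks directly that $\Pr[\vhA=A,\cG]=\Pr[\vA^+=A,\cG]$ for every matrix $A$, whence $\dTV(\vhA,\vA^+)\leq\Pr[\cG^{c}]=o(1)$ after averaging over $(\vv,\vv')$. The one step that needs real care is this last verification, namely matching the information content of the conditioning $(\ve_i\cap\{v,v'\})_i$ with exactly what the resampling leaves invariant and confirming that the two exceptional configurations are the \emph{only} obstructions; the remaining estimates are routine first-moment bounds.
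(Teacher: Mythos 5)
Your proposal is correct and follows essentially the same approach as the paper's (very terse) proof: identify a good event on which the distributions of $\vhA$ and $\vA^+$ coincide exactly, then bound the probability of the bad event by $O(1/n)$ via a union bound over clause rows. You are in fact more careful than the paper in one respect, namely in explicitly tracking the consequences of the resampling domain being $\{v_1,\ldots,v_n\}\setminus\{\vv\}$ rather than $\setminus\{\vv,\vv'\}$ — the cleanest way to make your final ``write out the probabilities'' step airtight is to compare both $\vhA$ and $\vA^+$ to an intermediate matrix resampled from $[n]\setminus\{\vv,\vv'\}$ and apply the triangle inequality, but the estimate and the conclusion are unchanged.
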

\begin{proof}
	Given that $\vv,\vv'$ have distance at least four in both $G(\vhA)$ and $G(\vA^+)$, the Tanner graphs of $\vhA$, $\vA^+$ are identically distributed.
	Moreover, the probability that $\vv,\vv'$ have distance less than four is bounded by $n^{-1+o(1)}$.
\end{proof}

The plan is to derive the following joint probability formula, and then follow up with Chebyshev's inequality.

\begin{lemma}\label{lem_Dzl}
	\Whp\ we have $\pr\brk{\vv\in\Delta_{z,\ell}(\fm_{\nix\to\nix}(\vA^+)),\vv'\in\Delta_{z',\ell'}(\fm_{\nix\to\nix}(\vA^+))\mid\vhA}=\bar\Delta_{z,\ell}(\vha)\bar\Delta_{z',\ell'}(\vha)+o(1)$.
\end{lemma}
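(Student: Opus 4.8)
The plan is to run the local argument of \Sec~\ref{sec_prop_vhA} for the pair $(\vv,\vv')$ in place of the single node $\vv$, exploiting that on a suitable `good event' the neighbourhoods of $\vv$ and of $\vv'$ decouple. By Fact~\ref{fact_astar2} it suffices to work with $\vA^+$, and throughout we condition on $\vhA$ (so that the fraction $\vha$ of frozen variables and the empirical variable-degree sequence become fixed); the probability $\pr[\,\cdot\mid\vhA]$ is then over the random choice of $(\vv,\vv')$ and the resampling, and \whp\ refers to the choice of $\vhA$. First I would introduce an event $\cE$ analogous to the one in the proof of \Prop~\ref{prop_vhA}, now asking that (a) both $\vv$ and $\vv'$ have degree at most $\Delta$ in $G(\vhA)$ and lie at distance $\gg1$ from each other, (b) the depth-two neighbourhoods $\partial_{\vA^+}^2\vv$ and $\partial_{\vA^+}^2\vv'$ are disjoint acyclic sets of variable nodes, neither of which is a proper relation of $\vA^-$, and (c) the analogue of condition \textbf{E3}, i.e.\ for every $a\in\partial_{\vA^+}\vv\cup\partial_{\vA^+}\vv'$ we have $\cF(\vA^+\setminus\{a\})\cap(\partial_{\vA^+}a\setminus\{\vv,\vv'\})=\cF(\vA^-)\cap(\partial_{\vA^+}a\setminus\{\vv,\vv'\})$. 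Exactly as in Claim~\ref{claim_E}, Fact~\ref{fact_degs}, Fact~\ref{fact_astar2}, \Cor~\ref{cor_vhA} and \Cor~\ref{cor_tinker} yield $\pr[\cE]=1-o(1)$, and Markov's inequality applied to $\pr[\cE^c\mid\vhA]$ then gives $\pr[\cE^c\mid\vhA]=o(1)$ for all but an $o(1)$-fraction of matrices $\vhA$.

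Second, on $\cE$ I would show, by rerunning Claims~\ref{claim_mhatm1}--\ref{claim_mhatF2} separately around $\vv$ and around $\vv'$, that all messages on edges incident with $\vv$, the label $\fm_\vv(\vA^+)$, and therefore the vector $\ell\in\cL$ recording the message statistics at $\vv$, are determined by the following local data: for each check $a\in\partial_{\vA^+}\vv$, the indicator $X_a=\prod_{w\in\partial_{\vA^+}a\setminus\{\vv\}}\vecone\{w\in\cF(\vA^-)\}$ that all of $a$'s other neighbours lie in $\cF(\vA^-)$. Indeed, by \textbf{E3} and \eqref{eqWPdef} we have $\fm_{a\to\vv}(\vA^+)=\frozen$ iff $X_a=1$, while $\fm_{\vv\to a}(\vA^+)=\frozen$ iff $X_b=1$ for some $b\in\partial_{\vA^+}\vv\setminus\{a\}$; inspecting \eqref{eqWPmarks1} and \eqref{eqcDcG1}--\eqref{eqcDcG3} one reads off that $\fm_\vv(\vA^+)$ equals $\unfrozen,\slush,\frozen$ according as $\sum_{a\in\partial_{\vA^+}\vv}X_a$ equals $0$, $1$, or $\geq2$, and that the accompanying $\ell$ is an explicit function of $\sum_aX_a$ and $d_{\vhA}(\vv)-\sum_aX_a$ (e.g.\ $\ell_{\unfrozen\unfrozen}=d_{\vhA}(\vv)$ and all other entries vanish when $\fm_\vv=\unfrozen$). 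The same holds with $\vv'$ in place of $\vv$, and on $\cE$ the data around $\vv$ use resampled slots disjoint from those around $\vv'$.

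Third, I would evaluate the resulting probabilities. Given $\vhA$ and a typical choice of $(\vv,\vv')$, the set $\cF(\vA^-)$ is already determined --- and crucially it does not depend on the resampling, since deleting $\partial_{\vA^+}\vv\cup\partial_{\vA^+}\vv'$ from $\vA^+$ returns $\vA^-$ regardless of how those checks were resampled --- and since $\vhA$ arises from $\vA^-$ by adding $O(\Delta)$ rows, \Cor~\ref{cor_tinker} gives $|\cF(\vA^-)|=\vha n+o(n)$; moreover $d_{\vhA}(\vv)$ follows the empirical degree distribution, which is within $o(1)$ of $\Po(d)$ by Fact~\ref{fact_degs}. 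For each $a\in\partial_{\vA^+}\vv$ the $k-1$ resampled neighbours of $a$ are essentially uniform and independent, so $\pr[X_a=1\mid\vhA,\vv,\vv']=\vha^{k-1}+o(1)$, and on $\cE$ the $X_a$, $a\in\partial_{\vA^+}\vv$, are independent of one another and of the corresponding indicators at $\vv'$. Poisson thinning then shows that $(\sum_{a\in\partial_{\vA^+}\vv}X_a,\,d_{\vhA}(\vv)-\sum_{a\in\partial_{\vA^+}\vv}X_a)$ converges in distribution to a pair of independent $\Po(d\vha^{k-1})$ and $\Po(d(1-\vha^{k-1}))$ variables (with the $\Po_{\geq2}$-conditioning in the frozen case), which is precisely the content of \eqref{eqDeltafell}. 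Hence $\pr[\vv\in\Delta_{z,\ell}(\fm_{\nix\to\nix}(\vA^+))\mid\vhA]=\bar\Delta_{z,\ell}(\vha)+o(1)$, and because the local neighbourhoods of $\vv$ and $\vv'$ are independent given $\vhA$, the joint probability factorises, which is the assertion of the lemma.

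I expect the main obstacle to be the bookkeeping linking $\cF(\vA^-)$ to $\vha=|\cF(\vhA)|/n$: conditioning on $\vhA$ turns $\vha$ into a fixed number, and we must make sure this is consistent with treating each resampled neighbour as a fresh $\Be(\vha^{k-1})$ trial. This rests on $\cF(\vA^-)$ and $\cF(\vhA)$ differing by only $o(n)$ --- which follows from \Cor~\ref{cor_tinker} once one knows, as in the proof of Claim~\ref{claim_E}, that the relevant minor $\vA^-$ inherits $(\omega^{-1},\omega)$-freeness from $\vhA$ --- and on the independence of the resampling from $\cF(\vA^-)$ noted above. Everything else is a faithful repetition of the single-vertex arguments of \Sec~\ref{sec_prop_vhA}.
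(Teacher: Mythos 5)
Your proof is correct and follows essentially the same strategy as the paper's: introduce a good event under which the depth-two neighbourhoods of $\vv$ and $\vv'$ are disjoint, acyclic, and decoupled from $\cF(\vA^{-})$, invoke the single-vertex analysis of \Sec~\ref{sec_prop_vhA} to reduce the label and message statistics at $\vv,\vv'$ to the Bernoulli indicators $\vecone\{\partial_{\vA^+}a\setminus\{\vv\}\subset\cF(\vA^-)\}$, and then compute the resulting joint law via Poisson thinning and independence of the resampled slots. The only stylistic difference is that the paper packages the required properties into explicit conditions \textbf{E0$'$}--\textbf{E4$'$} and invokes \Prop~\ref{prop_vhA} to get the local fixed-point property, whereas you rerun Claims~\ref{claim_mhatm1}--\ref{claim_mhatF2} directly; both routes are sound.
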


Towards the proof of \Lem~\ref{lem_Dzl} let $\cE'$ be the event that the following statements hold; let $\Delta\gg1$ diverge sufficiently slowly.
\begin{description}
	\item[E0$'$] we have $|\partial_{\vA^+}\vv|=\ell_{\unfrozen\unfrozen}+\ell_{\frozen\unfrozen}+\ell_{\unfrozen\frozen}+\ell_{\frozen\frozen}$ and $|\partial_{\vA^+}\vv'|=\ell'_{\unfrozen\unfrozen}+\ell'_{\frozen\unfrozen}+\ell'_{\unfrozen\frozen}+\ell'_{\frozen\frozen}$.
	\item[E1$'$] the second neighbourhoods $\partial_{\vA^+}^2\vv,\partial_{\vA^+}^2\vv'$ satisfy
		\begin{align*}
			\vv,\vv'&\not\in\partial_{\vA^+}^2\vv\cup\partial_{\vA^+}^2\vv',&
			|\partial_{\vA^+}^2\vv|&=(k-1)|\partial_{\vA^+}\vv|\leq\Delta,&
			|\partial_{\vA^+}^2\vv'|&=(k-1)|\partial_{\vA^+}\vv'|\leq\Delta.
		\end{align*}
	\item[E2$'$] we have $|\cF(\vhA)\setminus\cF(\vA^-)|=o(n)$ and $|\cF(\vA^+)\setminus\cF(\vA^-)|=o(n)$.
	\item[E3$'$] for all $a\in\partial_{\vA^+}\vv\cup\partial_{\vA^+}\vv'$ we have
			$\cF(\vA^+\setminus\{a\})\cap\partial_{\vA^+}a\setminus\{\vv,\vv'\}=\cF(\vA^-)\cap\partial_{\vA^+}a\setminus\{\vv,\vv'\}$.
	\item[E4$'$] for all $v\in\{\vv,\vv'\}$ and $a\in\partial_{\vA^+}v$ we have $\fm_{v\to a}(\vA^+)=\hat\fm_{v\to a}(\vA^+)$, $\fm_{a\to v}(\vA^+)=\hat\fm_{a\to v}(\vA^+)$.
\end{description}
Thus, {\bf E0$'$} provides that the degrees of $\vv,\vv'$ match the sum of the entries of $\ell,\ell'$.
Moreover, {\bf E1$'$} ensures that $\vv,\vv'$ have distance at least four and that their second neighbourhoods are acyclic.
Further, {\bf E2$'$} provides that $\vA^-,\vhA,\vA^+$ have about the same number of frozen variables.
In particular, {\bf E3$'$} demands that the frozen variables in the second neighbourhood of $\vv,\vv'$ coincide in $\vA^+$ and $\vA^-$.
Finally, {\bf E4} posits that the messages that touch $\vv,\vv'$ are invariant under the WP update \eqref{eqWPupdate1}.

\begin{claim}\label{claim_Dzl}
	We have $\pr\brk{\cE'\mid\mbox{\bf E0$'$}}=1-o(1)$ and
	\begin{align}\label{eqclaim_Dzl}
		\pr\brk{\mbox{\bf E0$'$}}=\pr\brk{\Po(d)=\ell_{\unfrozen\unfrozen}+\ell_{\frozen\unfrozen}+\ell_{\unfrozen\frozen}+\ell_{\frozen\frozen}}\pr\brk{\Po(d)=\ell'_{\unfrozen\unfrozen}+\ell'_{\frozen\unfrozen}+\ell'_{\unfrozen\frozen}+\ell'_{\frozen\frozen}}+o(1).
	\end{align}
\end{claim}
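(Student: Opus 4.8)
The plan is to treat the two assertions separately.

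\emph{The degree formula \eqref{eqclaim_Dzl}.} The resampling operation of Fact~\ref{fact_astar2} leaves the rows through $\vv$ and $\vv'$ in place, so $d_{\vA^+}(\vv)=d_{\vhA}(\vv)$ and $d_{\vA^+}(\vv')=d_{\vhA}(\vv')$; hence it suffices to compute the joint degree distribution of two uniformly random distinct variable nodes of $\vhA$. Each of the $m\sim dn/k$ rows of $\vA$ is incident to a prescribed variable with probability $k/n$ independently of the others and incident to two prescribed variables with probability $O(n^{-2})$, while the $\vt=O(\log n)$ pinning rows miss a given variable with probability $1-o(1)$. Consequently the indicator that row $i$ hits $\vv$ and the indicator that it hits $\vv'$ decorrelate up to an error of order $1/n$ when summed over all rows, so $(d_{\vhA}(\vv),d_{\vhA}(\vv'))$ converges in distribution to a pair of independent $\Po(d)$ variables; alternatively one may cite the convergence of the empirical degree distribution of $G(\vhA)$ to $\Po(d)$ from Fact~\ref{fact_degs}. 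Since {\bf E0$'$} requires exactly that these two degrees equal $\sum_{s,t}\ell_{st}$ and $\sum_{s,t}\ell'_{st}$, this yields \eqref{eqclaim_Dzl}.

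\emph{The bound $\pr[\cE'\mid\mbox{\bf E0$'$}]=1-o(1)$.} We condition on {\bf E0$'$} throughout and may assume $\sum_{s,t}\ell_{st},\sum_{s,t}\ell'_{st}\le\Delta/(k-1)$, since larger values are immaterial for \Prop~\ref{prop_wp}. Then {\bf E1$'$} — distance at least four between $\vv,\vv'$ and acyclic depth-two neighbourhoods of the prescribed sizes — holds \whp\ by a routine first-moment computation in the sparse Tanner graph $G(\vhA)$, valid because $\Delta$ diverges slowly and {\bf E0$'$} constrains only boundedly many rows. For {\bf E2$'$}, recall that $\vA^-$ still carries the $\vt$ pinning rows of $\vhA$, placed in independent uniform positions; hence Lemma~\ref{lem_pinning} makes $\vA^-$ $(\omega^{-1},\omega)$-free \whp, and since $\vhA$, $\vA^+$ and each $\vA^+\setminus\{a\}$ are obtained from $\vA^-$ by adding at most $2\Delta\le\omega'$ rows, Corollary~\ref{cor_tinker} bounds $|\cF(\vhA)\setminus\cF(\vA^-)|$ and $|\cF(\vA^+)\setminus\cF(\vA^-)|$ by $n/\omega'=o(n)$ (using the monotonicity $\cF(\vA^-)\subseteq\cF(\vhA),\cF(\vA^+)$). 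For {\bf E3$'$}, the construction of $\vA^+$ makes $\partial^2_{\vA^+}\vv\cup\partial^2_{\vA^+}\vv'$ a uniformly random set of variable nodes of $G(\vA^-)$ of size at most $2\Delta$ avoiding $\{\vv,\vv'\}$; a short linear-algebra argument shows that if restoring the checks incident to $\vv,\vv'$ (minus any single $a$) freezes a variable $u$ that lies in one of their neighbourhoods and was unfrozen in $\vA^-$, then $\vA^-$ admits a non-zero row-combination whose support lies in that random set together with $\{\vv,\vv'\}$ and meets $[n]\setminus\cF(\vA^-)$ — i.e.\ a proper relation of $\vA^-$ of size $O(\Delta)$. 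The $(\omega^{-1},\omega)$-freeness of $\vA^-$ together with $\Delta\ll\omega$ makes this unlikely for a random set of that size, exactly as in the proof of Claim~\ref{claim_E}; hence {\bf E3$'$} holds \whp. Finally {\bf E4$'$} is a deterministic consequence of {\bf E1$'$}--{\bf E3$'$}, since these are the two-variable analogues of the event $\cE$ of \Sec~\ref{sec_prop_vhA}: running the arguments of Claims~\ref{claim_mhatm1}--\ref{claim_mhatF2} at $\vv$ and then at $\vv'$, with $\vA^-$ now in the role it plays above, shows that every message incident to $\vv$ or $\vv'$ is invariant under \eqref{eqWPupdate1}.

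The hard part is {\bf E3$'$}: it requires controlling the interaction between two data-dependent objects — the set of variables frozen by the restored checks, and the second neighbourhood of $\vv,\vv'$ — which forces both a careful hierarchy of divergence rates $1\ll\Delta\ll\omega'\ll\omega$ and a conditioning order that exposes $\vA^-$ (and hence its near-freeness) before the random re-attachment of the checks around $\vv,\vv'$. A secondary point is that \eqref{eqclaim_Dzl} and the tree-likeness in {\bf E1$'$} must be transported from $\vhA$ to $\vA^+$ via the $o(1)$ total-variation bound of Fact~\ref{fact_astar2}, since only on $\vA^+$ are the messages at $\vv,\vv'$ determined by a bounded tree-like neighbourhood.
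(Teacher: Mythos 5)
Your handling of \eqref{eqclaim_Dzl} and of \textbf{E1$'$}--\textbf{E3$'$} tracks the paper's proof (Fact~\ref{fact_degs}, pinning, Corollary~\ref{cor_tinker}), though your argument for \textbf{E2$'$} is phrased a bit more directly in terms of $\vA^-$; this part is fine. The gap lies in \textbf{E4$'$}. You claim \textbf{E4$'$} ``is a deterministic consequence of \textbf{E1$'$}--\textbf{E3$'$}, since these are the two-variable analogues of the event $\cE$''. That premise is incorrect: whereas \textbf{E2} in Section~\ref{sec_prop_vhA} asserts that $\partial_{\vA^+}^2\vv$ is not a proper relation of $\vA^-$, the paper's \textbf{E2$'$} is only the \emph{global} bound $|\cF(\vhA)\setminus\cF(\vA^-)|=o(n)$ and $|\cF(\vA^+)\setminus\cF(\vA^-)|=o(n)$ --- a condition needed later for \eqref{eqlem_Dzl2}, but one that says nothing about proper relations meeting $\partial_{\vA^+}^2\vv\cup\partial_{\vA^+}^2\vv'$. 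The deterministic arguments of Claims~\ref{claim_mhatm2} and~\ref{claim_mhatF2} hinge on precisely that proper-relation hypothesis (it is what lets one invoke Fact~\ref{fact_indep} on the block decomposition); without it, ``running the arguments at $\vv$ and then at $\vv'$'' is not a logical consequence of \textbf{E1$'$}--\textbf{E3$'$}. You could repair this by adding the two-variable analogue of \textbf{E2} --- ``$\partial_{\vA^+}^2\vv\cup\partial_{\vA^+}^2\vv'$ is not a proper relation of $\vA^-$'' --- to your list of whp events (it does hold whp by the same freeness considerations you already use for \textbf{E3$'$}), but then \textbf{E4$'$} is a consequence of \textbf{E1$'$}, \textbf{E3$'$} \emph{and} this extra event, not of \textbf{E1$'$}--\textbf{E3$'$} alone. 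The paper instead takes a shorter, purely probabilistic route: the fixed-point bound \eqref{eqprop_vhAi} already established in Proposition~\ref{prop_vhA} says only $o(n)$ message pairs in $\vhA$ violate \eqref{eqWPfixedEx}; transferring this to $\vA^+$ via the $o(1)$ total-variation estimate of Fact~\ref{fact_astar2} and using that $\vv,\vv'$ are uniformly random gives \textbf{E4$'$} whp by Markov's inequality. Either fix works, but the deterministic implication as you stated it does not.
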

\begin{proof}
	The estimate \eqref{eqclaim_Dzl} is an immediate consequence of Fact~\ref{fact_degs}.
	Regarding the probability of $\cE'$ given {\bf E0$'$}, the same arguments as in the proof of Claim~\ref{claim_E} show that {\bf E1$'$}--{\bf E3$'$} follow from Fact~\ref{fact_degs}, \Cor~\ref{cor_vhA} and \Cor~\ref{cor_tinker}.
	Furthermore, {\bf E4$'$} follows from Eq.~\eqref{eqprop_vhAii} from \Prop~\ref{prop_vhA} and Fact~\ref{fact_astar2}.
\end{proof}

\begin{proof}[Proof of \Lem~\ref{lem_Dzl}]
	Let $\vX=|\partial_{\vA^+}\vv|$, $\vX'=|\partial_{\vA^+}\vv'|$ be the degrees of $\vv,\vv'$.
	Moreover, let
	\begin{align*}
		\vX_\frozen&=\sum_{a\in\partial_{\vA^+}\vv}\vecone\{\partial_{\vA^+}a\setminus\{\vv\}\subset\cF(\vA^-)\},&\vX_\unfrozen&=\vX-\vX_\frozen,&\vX'_\frozen&=\sum_{a\in\partial_{\vA^+}\vv'}\vecone\{\partial_{\vA^+}a\setminus\{\vv'\}\subset\cF(\vA^-)\},&\vX'_\unfrozen&=\vX'-\vX'_\frozen.
	\end{align*}
	Additionally, let $\cX=\{\vX_{\frozen}=\ell_{\frozen\frozen}+\ell_{\frozen\unfrozen},\,\vX_{\unfrozen}=\ell_{\unfrozen\frozen}+\ell_{\unfrozen\unfrozen}\}$
	and $\cX'=\{\vX'_{\frozen}=\ell'_{\frozen\frozen}+\ell'_{\frozen\unfrozen},\,\vX'_{\unfrozen}=\ell'_{\unfrozen\frozen}+\ell'_{\unfrozen\unfrozen}\}$.
	We are going to argue that \Prop~\ref{prop_vhA} and Claim~\ref{claim_Dzl} imply
	\begin{align}\label{eqlem_Dzl1}
		\pr\brk{\vv\in\Delta_{z,\ell}(\fm_{\nix\to\nix}(\vA^+)),\vv'\in\Delta_{z',\ell'}(\fm_{\nix\to\nix}(\vA^+))\mid\cE'}&=\pr\brk{\cX\cap\cX'\mid\cE'}+o(1).
	\end{align}
	Indeed, the WP fixed point property {\bf E4$'$} ensures that the WP messages that $\vv,\vv'$ send out to their neighbouring check nodes are determined by the incoming messages.
	Furthermore, {\bf E3$'$} provides that for every $a\in\partial_{\vA^+}\vv$ we have $\fm_{a\to\vv}(\vA^+)=\frozen$ iff $\partial_{\vA^+}a\setminus\{\vv\}\subset\cF(\vA^-)$, and similarly for $\vv'$.
	Consequently, \eqref{eqWPupdate1}, \eqref{eqWPmarks1} and~\eqref{eqWPmarks2} show that on $\cE'$ the random variables $\vX_{\frozen},\vX_{\unfrozen},\vX'_{\frozen},\vX'_{\unfrozen}$ capture the salient information supplied by the incoming messages $\fm_{\nix\to\vv}(\vA^+),\fm_{\nix\to\vv'}(\vA^+)$, whence we obtain \eqref{eqlem_Dzl1}.
	
	Further, we claim that if $\vhA$ satisfies {\bf E0$'$}, then
	\begin{align}\label{eqlem_Dzl2}
		\pr\brk{\cX\cap\cX'\mid\vhA}&=\vec\alpha^{(k-1)(\ell_{\frozen\unfrozen}+\ell_{\frozen\frozen}+\ell'_{\frozen\unfrozen}+\ell'_{\frozen\frozen})}
		(1-\vec\alpha^{k-1})^{\ell_{\unfrozen\unfrozen}+\ell_{\unfrozen\frozen}+\ell'_{\unfrozen\unfrozen}+\ell'_{\unfrozen\frozen}}+o(1);
	\end{align}
	for by construction the new second neighbours of $\vv,\vv'$ are chosen uniformly.
	Hence, due to {\bf E2$'$} the probability that any specific second neighbour belongs to $\cF(\vA^-)$ equals $\vec\alpha+o(1)$, and due to {\bf E1$'$} these events are asymptotically independent.
	Finally, we combine \eqref{eqclaim_Dzl}, \eqref{eqlem_Dzl1} and \eqref{eqlem_Dzl2} to complete the proof.
\end{proof}

In order to estimate the sizes of the sets $\Gamma_{z,\ell}(\fm_{\nix\to\nix}(\vhA))$, we let $\va,\va'$ be a random pair of distinct check nodes.
Let $\vA^{\#}$ be the matrix obtained from $\vhA$ by resampling the neighbours of $\va,\va'$ independently.
Then $\vA^{\#}$ and $\vhA$ are identically distributed.
In analogy to \Lem~\ref{lem_Dzl}, we prove the following.

\begin{lemma}\label{lem_Gzl}
	Let $z,z'\in\{\unfrozen,\frozen,\slush\}$ and let $\ell\in\cG(z),\ell'\in\cG(z')$.
	\Whp\ we have $$\pr\brk{\va\in\Gamma_{z,\ell}(\fm_{\nix\to\nix}(\vA^\#)),\va'\in\Gamma_{z',\ell'}(\fm_{\nix\to\nix}(\vA^\#))\mid\vhA}=\bar\Gamma_{z,\ell}(\vha)\bar\Gamma_{z',\ell'}(\vha)+o(1).$$
\end{lemma}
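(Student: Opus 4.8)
The plan is to mirror the proof of \Lem~\ref{lem_Dzl}, but now tracking a random pair of check nodes $\va,\va'$ instead of variable nodes. First I would invoke the resampling fact: since $\vA^\#$ is obtained from $\vhA$ by independently resampling the neighbourhoods of $\va$ and $\va'$, and since w.h.p.\ $\va,\va'$ lie at distance at least four in both Tanner graphs, $\vA^\#$ and $\vhA$ are identically distributed (or at least have total variation distance $o(1)$), exactly as in Fact~\ref{fact_astar2}. Then I would set up a good event $\cE''$ parallel to $\cE'$: the degrees of $\va,\va'$ are $k$ by construction; their second neighbourhoods $\partial^2_{\vA^\#}\va,\partial^2_{\vA^\#}\va'$ are acyclic, of size exactly $k(k-1)\leq\Delta$, and disjoint from $\{\va,\va'\}$ and from each other; the frozen sets satisfy $|\cF(\vA^\#)\triangle\cF(\vA^-)|=o(n)$ where $\vA^-$ deletes all checks in $\partial_{\vA^\#}\va\cup\partial_{\vA^\#}\va'$; the relation $\partial^2_{\vA^\#}\va\cup\partial^2_{\vA^\#}\va'$ is not a proper relation of $\vA^-$; and finally the WP messages touching $\va,\va'$ are invariant under \eqref{eqWPupdate1}. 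As before, Fact~\ref{fact_degs}, \Cor~\ref{cor_vhA}, \Cor~\ref{cor_tinker} and \Prop~\ref{prop_vhA} give $\pr[\cE'']=1-o(1)$; the only change is that the degree constraint is now deterministic rather than Poisson.

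Next I would reduce the event $\{\va\in\Gamma_{z,\ell},\va'\in\Gamma_{z',\ell'}\}$ to a statement purely about how many of the $k$ variable-neighbours of each check are frozen. On $\cE''$ the message-passing fixed-point property forces $\fm_{v\to\va}(\vA^\#)=\frozen$ iff $v\in\cF(\vA^\#\setminus\{\va\})$, and {\bf E3}-type information identifies this with $v\in\cF(\vA^-)$; the outgoing messages $\fm_{\va\to v}$ are then determined by \eqref{eqWPupdate1}, i.e.\ $\fm_{\va\to v}=\frozen$ iff all the \emph{other} $k-1$ neighbours of $\va$ are frozen. Consulting the definitions \eqref{eqcDcG1}--\eqref{eqcDcG3} of $\cG(\unfrozen),\cG(\slush),\cG(\frozen)$, one sees that the pair $(z,\ell)$ is completely pinned down by the number $j\in\{0,\dots,k\}$ of frozen neighbours of $\va$: $j=k$ forces $z=\frozen$ with $\ell_{\frozen\frozen}=k$; $j=k-1$ forces $z=\slush$ with $\ell_{\unfrozen\frozen}=1,\ell_{\frozen\unfrozen}=k-1$; and $j\leq k-2$ forces $z=\unfrozen$ with $\ell_{\unfrozen\unfrozen}=k-j\geq2,\ell_{\frozen\unfrozen}=j$. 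So the event in question becomes, up to $o(1)$, the event that $\va$ has exactly the prescribed number of frozen neighbours in $\vA^-$ and likewise for $\va'$.

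Finally I would compute this probability. By construction the $k$ neighbours of $\va$ in $\vA^\#$ are drawn uniformly from $\{v_1,\dots,v_n\}$, and by {\bf E2}-type estimates each one lands in $\cF(\vA^-)$ with probability $\vha+o(1)$; by {\bf E1}-type acyclicity the $2k$ draws for $\va$ and $\va'$ are asymptotically independent. Hence the probability of any prescribed frozen-pattern is $\vha^{(\#\,\mathrm{frozen})}(1-\vha)^{(\#\,\mathrm{unfrozen})}+o(1)$, and multiplying by the binomial coefficient for the positions and matching against \eqref{eqdeltagamma2} and \eqref{eqGammafell0}--\eqref{eqGammafell} yields exactly $\bar\Gamma_{z,\ell}(\vha)\bar\Gamma_{z',\ell'}(\vha)+o(1)$: for $z=\frozen$ we get $\vha^k=\bar\gamma(\vha,\frozen)$; for $z=\slush$ we get $k(1-\vha)\vha^{k-1}=\bar\gamma(\vha,\slush)$; for $z=\unfrozen$ with $\ell_{\unfrozen\unfrozen}=k-j$ we get $\binom{k}{j}\vha^j(1-\vha)^{k-j}=\bar\gamma(\vha,\unfrozen)\pr[\Bin_{\geq2}(k,1-\vha)=k-j]$, using that the $\Bin_{\geq2}$ normalisation absorbs the $j=k-1,k$ cases. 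I expect the main obstacle to be the same as in \Lem~\ref{lem_Dzl}: the bookkeeping needed to show that the good event $\cE''$ really does let one replace $\cF(\vA^\#\setminus\{\va\})\cap\partial\va$ by $\cF(\vA^-)\cap\partial\va$ — that is, verifying via Fact~\ref{fact_indep} and \Cor~\ref{cor_tinker} that adding back $O(\Delta)$ rows to $\vA^-$ changes the frozen status of none of the finitely many relevant coordinates — together with the routine but slightly fiddly check that the $\Bin_{\geq2}$ conditioning in \eqref{eqGammafell0} is exactly the shape produced by the case analysis above. Having established the conditional probability formula, Chebyshev's inequality applied to $\sum_{\va}\vecone\{\va\in\Gamma_{z,\ell}(\fm_{\nix\to\nix}(\vhA))\}$ (whose conditional second moment is controlled by \Lem~\ref{lem_Gzl}) then delivers the concentration needed for \Prop~\ref{prop_wp}.
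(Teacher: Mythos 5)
Your high-level strategy matches the paper's: resample via an $\vA^\#$ obtained by redrawing the neighbourhoods of $\va,\va'$, condition on a good event, reduce the event $\{\va\in\Gamma_{z,\ell}\}$ to a statement about how many of $\va$'s $k$ neighbours are frozen in a reference matrix, and then compute the probability using approximate independence. Your final bookkeeping in the third paragraph (matching $\vha^j(1-\vha)^{k-j}\binom{k}{j}$ against $\bar\gamma(\vha,\cdot)$ and the $\Bin_{\geq 2}$ normalisation) is correct.

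However, your good event $\cE''$ is set up by transcribing the $\cE'$ conditions from \Lem~\ref{lem_Dzl} too literally, and for check nodes this does not typecheck. You write that ``$\vA^-$ deletes all checks in $\partial_{\vA^\#}\va\cup\partial_{\vA^\#}\va'$''; but $\partial_{\vA^\#}\va$ consists of \emph{variable} nodes, so there are no checks in that set to delete. What you actually want is the lighter deletion $\vhA\setminus\{\va,\va'\}$, i.e.\ just removing the two rows indexed by $\va,\va'$ themselves; that is the reference matrix the paper uses in its conditions {\bf A2}--{\bf A3}. Likewise, your condition that ``$\partial^2_{\vA^\#}\va\cup\partial^2_{\vA^\#}\va'$ is not a proper relation of $\vA^-$'' does not make sense: the second neighbourhood of a check node is a set of \emph{check} nodes, whereas a proper relation is a set of column (i.e.\ variable) indices. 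The acyclicity requirement and the size bound $k(k-1)$ also inherit this confusion — in the Poisson Tanner graph the second neighbourhood of a check has random size governed by the variable degrees, and it consists of checks, not variables. The deeper point is that the check-node version is genuinely \emph{simpler} than \Lem~\ref{lem_Dzl}: since $\fm_{v\to\va}(\vA^\#)=\frozen$ is determined by whether $v\in\cF(\vA^\#\setminus\{\va\})$, you only need the frozen status of the $k$ \emph{first}-neighbour variables of $\va$, not any second neighbourhoods. The paper's event $\cA$ is correspondingly leaner: (A1) disjointness of $\partial\va,\partial\va'$, (A2) the frozen sets of $\vA^\#$ and $\vhA$ agree with $\cF(\vhA\setminus\{\va,\va'\})$ up to $o(n)$, (A3) on $\partial\va$ and $\partial\va'$ the sets $\cF(\vA^\#\setminus\{\va\})$ resp.\ $\cF(\vA^\#\setminus\{\va'\})$ coincide with $\cF(\vhA\setminus\{\va,\va'\})$, and (A4) the WP fixed-point property at $\va,\va'$. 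With these repairs — in particular replacing your $\vA^-$ by $\vhA\setminus\{\va,\va'\}$ and dropping the second-neighbourhood and proper-relation conditions — your argument goes through and agrees with the paper's.
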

\begin{proof}
	Consider the following event $\cA$:
	\begin{description}
	\item[A1] the neighbourhoods $\partial_{\vA^\#}\va,\partial_{\vA^\#}\va'$ are disjoint.
	\item[A2] we have $|\cF(\vA^\#)\setminus\cF(\vhA\setminus\{\va,\va'\})|=o(n)$ and $|\cF(\vhA)\setminus\cF(\vhA\setminus\{\va,\va'\})|=o(n)$.
	\item[A3] we have $\cF(\vA^\#\setminus\{\va\})\cap\partial_{\vA^\#}\va=\cF(\vhA\setminus\{\va,\va'\})\cap\partial_{\vA^\#}\va$ and $\cF(\vA^\#\setminus\{\va'\})\cap\partial_{\vA^\#}\va'=\cF(\vhA\setminus\{\va,\va'\})\cap\partial_{\vA^\#}\va'$.
	\item[A4] for all $v\in\partial_{\vA^\#}\va$ we have $\fm_{\va\to v}(\vA^\#)=\hat\fm_{\va\to v}(\vA^\#)$ and for all $v\in\partial_{\vA^\#}\va'$ we have $\fm_{\va'\to v}(\vA^\#)=\hat\fm_{\va'\to v}(\vA^\#)$.
	\end{description}
	Then \Cor~\ref{cor_vhA}, \Prop~\ref{prop_vhA}  and \Cor~\ref{cor_tinker} show that
	\begin{align}\label{eqlem_Gzl0}
		\pr\brk\cA&=1-o(1).
	\end{align}
	
	Further, let
	\begin{align*}
		\vY_\frozen&=|\partial_{\vA^\#}\va\cap\cF(\vhA\setminus\{\va,\va'\})|,&
		\vY_\unfrozen&=k-\vY_\frozen,&
		\vY'_\frozen&=|\partial_{\vA^\#}\va'\cap\cF(\vhA\setminus\{\va,\va'\})|,&
		\vY'_\unfrozen&=k-\vY_\frozen'.
	\end{align*}
	Also let $\cY=\{\vY_\frozen=\ell_{\frozen\frozen}+\ell_{\frozen\unfrozen}\}$ and $\cY'=\{\vY'_\frozen=\ell'_{\frozen\frozen}+\ell'_{\frozen\unfrozen}\}$.
	We claim that 
	\begin{align}\label{eqlem_Gzl1}
		\pr\brk{\va\in\Gamma_{z,\ell}(\fm_{\nix\to\nix}(\vA^\#)),\vv'\in\Gamma_{z',\ell'}(\fm_{\nix\to\nix}(\vA^\#))\mid\cA}&=\pr\brk{\cY\cap\cY'\mid\cA}+o(1);
	\end{align}
	for {\bf A4} provides that the messages that $\va,\va'$ send out to their neighbours are determined by the incoming messages via \eqref{eqWPupdate1}.
	Moreover, {\bf A3} ensures that for $v\in\partial_{\vA^\#}\va$ we have $\fm_{v\to\va}(\vA^\#)=\frozen$ iff $v\in\cF(\vhA\setminus\{\va,\va')$, and similarly for $v'\in\partial_{\vA^\#}\va'$.

	Finally, since $\vA^\#$ is obtained by resampling the neighbourhoods of $\va,\va'$, {\bf A1}--{\bf A2} show that
	\begin{align}\label{eqlem_Gzl2}
		\pr\brk{\cY\cap\cY'\mid\vhA}&=\vec\alpha^{\ell_{\frozen\unfrozen}+\ell_{\frozen\frozen}+\ell'_{\frozen\unfrozen}+\ell'_{\frozen\frozen}}(1-\vec\alpha)^{\ell_{\unfrozen\unfrozen}+\ell_{\unfrozen\frozen}+\ell'_{\unfrozen\unfrozen}+\ell'_{\unfrozen\frozen}}+o(1).
	\end{align}
	Thus, the assertion follows from \eqref{eqlem_Gzl0}--\eqref{eqlem_Gzl2}.
\end{proof}

\begin{proof}[Proof of \Prop~\ref{prop_wp}]
	The proposition follows from Fact~\ref{fact_degs}, \Lem s~\ref{lem_Dzl} and~\ref{lem_Gzl} and Chebyshev.
\end{proof}

\section{Moment computations}\label{sec_covers}

\noindent
In this section we prove \Prop~\ref{prop_annealedwp} and \Cor~\ref{cor_annealedwp} and complete the proof of \Thm~\ref{thm_main}.
Our principal tool will be moment computations.
In particular, we will compute the mean of the number $\vX_\alpha$ of $\alpha$-extensions for $\alpha\in[0,1]$.
Crucially, because the definitions \eqref{eqalphaWP1}--\eqref{eqalphaWP2} prescribe the correct `quenched' statistics provided by \eqref{eqDeltafell}--\eqref{eqGammafell} as well as an approximate version of the WP fixed point property \eqref{eqWPfixedEx}, the ensuing calculations turn out to be tight as well as relatively elegant.
This manifests itself in the fact that we ultimately recover the function $\Phi_{d,k}$ from~\eqref{eqPhi}.

\subsection{Counting WP fixed points}\label{sec_WP_count}
We begin by calculating the expected number of $\alpha$-WP fixed points, for which we resort to the pairing model of the random bipartite Tanner graph.
To this end we condition on the $\sigma$-algebra $\fD$ generated by the degrees $d_{\vhA}(v_j)$ of the variable nodes and by $\vt$.
Given $\fD$ let
\begin{align*}
	\fV=\bigcup_{j=1}^n\{v_j\}\times[d_{\vhA}(v_j)]&&\mbox{and}&&\fF=\cbc{b_1\cup\cdots\cup b_{\vt}}\cup\bigcup_{i=1}^m\cbc{a_i}\times[k]
\end{align*}
be sets of variable and check clones; here $b_1,\ldots,b_{\vt}$ represent the checks that the pinning operation from \Sec~\ref{sec_pin} induces.
A {\em pairing} is a bijection $\pi:\fV\to\fF$. 
Let $\fP$ be the set of all pairings.
As usual, we construct a Tanner graph $G(\vec\pi)$ by drawing a $\vec\pi\in\fP$ uniformly at random and contracting the clones into single vertices.
This graph may possess multi-edges, in contrast to the random graph $G(\vhA)$.
However, it is well known that once we condition on the event $\fS$ that $G(\vec\pi)$ is simple, the distribution of $G(\vec\pi)$ coincides with that of $G(\vhA)$.
Moreover, routine arguments along the lines of~\cite[Chapter~9]{JLR} show the following.

\begin{fact}
	For any $d>0,k\geq3$ \whp\ we have $\pr\brk{\fS\mid\fD}=\Omega(1)$.
\end{fact}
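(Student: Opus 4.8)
The plan is to identify $\fS$ with the simplicity event of the pairing model and then to invoke the standard Poisson-approximation argument for the number of multi-edges. The first observation is that the Tanner graph $G(\vhA)$ of the random matrix itself is \emph{always} simple: between a variable node $v_j$ and any check node $c$ there is an edge iff the corresponding entry of $\vhA$ is non-zero, so the multiplicity of every variable--check pair is $0$ or $1$. Hence the only multi-edges that can arise in $G(\vec\pi)$ stem from the contraction of clones in the pairing construction, and $\fS$ is precisely the event that $G(\vec\pi)$ carries no such multi-edge.

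Next I would condition on $\fD$ and restrict attention to the \whp\ event furnished by Fact~\ref{fact_degs}, on which the variable degree sequence is ``nice'': $\sum_{j=1}^n d_{\vhA}(v_j)=(1+o(1))dn$, $\sum_{j=1}^n d_{\vhA}(v_j)(d_{\vhA}(v_j)-1)=(1+o(1))d^2n$, and $\sum_{j=1}^n\eul^{d_{\vhA}(v_j)}=O(n)$, the last bound entailing $\sum_{j=1}^n d_{\vhA}(v_j)^r=O(n)$ for each fixed $r$ as well as $\max_j d_{\vhA}(v_j)=n^{o(1)}$. Fix any $\fD$ in this event.

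Now let $\vM$ be the number of pairs of parallel edges in $G(\vec\pi)$, i.e.\ the number of unordered pairs of clones of a single check $a_i$ ($i\in[m]$) that $\vec\pi$ matches to two clones of a single variable; note that the $\vt$ pinning checks each have a single clone and hence never contribute to $\vM$. A routine falling-factorial-moment computation along the lines of~\cite[Chapter~9]{JLR} then yields, for every fixed integer $r\geq1$,
\begin{align*}
	\ex\brk{(\vM)_r\mid\fD}&=\bc{\frac{mk(k-1)}2\cdot\frac{\sum_{j=1}^n d_{\vhA}(v_j)(d_{\vhA}(v_j)-1)}{\bc{\sum_{j=1}^n d_{\vhA}(v_j)}^2}}^r+o(1)\longrightarrow\bc{\frac{d(k-1)}2}^r,
\end{align*}
where the $o(1)$ absorbs the contributions of parallel pairs sharing a clone, of parallel classes of multiplicity at least three, and of the discrepancy between falling and ordinary factorials of the variable degrees; all of these are bounded using the degree-sequence estimates above together with $m=O(n)$. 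By the method of moments $\vM$ converges in distribution to $\Po(d(k-1)/2)$ as $n\to\infty$, and since $\fS=\{\vM=0\}$ we obtain $\pr\brk{\fS\mid\fD}\to\eul^{-d(k-1)/2}>0$. As this limit is a positive constant independent of the particular $\fD$ (within the \whp\ event above), we conclude $\pr\brk{\fS\mid\fD}=\Omega(1)$ \whp, as claimed.

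The argument is entirely standard, so there is no genuine obstacle; the only points deserving a line of care are the remark that $G(\vhA)$ is always simple (so that $\fS$ really does coincide with the simplicity event of the pairing model), the observation that the degree-one pinning rows are invisible to $\vM$, and the verification that the error terms in the factorial-moment estimate are $o(1)$ on the typical degree sequence supplied by Fact~\ref{fact_degs}.
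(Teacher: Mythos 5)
Your proposal is correct and is precisely the routine configuration-model argument the paper waves at when citing~\cite[Chapter~9]{JLR}; the paper gives no proof of its own, so there is nothing to diverge from. The key observations you flag are indeed the ones that deserve a line of care: $\fS$ coincides with the no-multi-edge event because $G(\vhA)$ is simple by construction, the unary pinning rows cannot produce parallel edges, and the degree-sequence bounds supplied by Fact~\ref{fact_degs} (in particular $\sum_j \eul^{d_{\vhA}(v_j)}=O(n)$, hence $\sum_j d_{\vhA}(v_j)^r=O(n)$ for fixed $r$ and $\max_j d_{\vhA}(v_j)=O(\log n)$) tame the error terms in the factorial-moment computation so that the method of moments applies. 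The resulting limit $\pr\brk{\fS\mid\fD}\to\eul^{-d(k-1)/2}>0$ on the w.h.p.\ event gives the claimed $\Omega(1)$.
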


In order to calculate the expected number of $\alpha$-WP fixed points of $G(\vec\pi)$ we compute the total number of pairings $\pi\in\fP$ together with appropriate $\{\unfrozen,\frozen\}$-valued annotations of the clones.
To be precise, an {\em $\alpha$-cover} $(\pi,\fp)$ consists of a pairing $\pi$ and a map $\fp:\fV\cup\fF\to\{\unfrozen,\frozen\}^2,(x,h)\mapsto\fp(x,h)=(\fp_1(x,h),\fp_2(x,h))$ that satisfy the following conditions.
\begin{description}
	\item[COV1] For all $(x,h)\in\fV\cup\fF$ we have $(\fm_1(\pi(x,h)),\fm_2(\pi(x,h)))=(\fm_2(x,h),\fm_1(x,h))$.
	\item[COV2] For all but $o(n)$ pairs $(v_j,l)$ with $j\in[n]$ and $l\in[d_{\vhA}(v_i)]$ we have
		\begin{align*}
			\fp_2(v_j,l)=\begin{cases}
				\frozen&\mbox{ if $\fp_1(v_j,h)=\frozen$ for some $h\in[d_{\vhA}(v_j)]\setminus\{l\}$},\\
				\unfrozen&\mbox{ otherwise.}
			\end{cases}
		\end{align*}
	\item[COV3] For all but $o(n)$ pairs $(a_i,l)$ with $i\in[m]$ and $l\in[d_{\vhA}(a_i)]$ we have
		\begin{align*}
			\fp_2\bc{a_i,l}=\begin{cases}
				\frozen&\mbox{ if $\fp_1\bc{a_i,h}=\frozen$ for all $h\in[k]\setminus\{l\}$},\\
				\unfrozen&\mbox{ otherwise.}
			\end{cases}
		\end{align*}
	\item[COV4] For any $z\in\{\frozen,\slush,\unfrozen\}$, $\ell = (\ell_{\unfrozen\unfrozen},\ell_{\unfrozen\frozen},\ell_{\frozen\unfrozen},\ell_{\frozen\frozen})\in\cL$, $i\in[m]$ and $j\in[n]$ let
		\begin{align}\label{eqCOV5a}
			\fp(v_j)&=
			\begin{cases}
				\frozen&\mbox{ if $\fp_{1}(v_j,l)=\frozen$ for at least two $l\in[d_{\vhA}(v_j)]$,}\\
				\slush&\mbox{ if $\fp_{1}(v_j,l)=\frozen$ for precisely one $l\in[d_{\vhA}(v_j)]$,}\\
				\unfrozen&\mbox{ otherwise,}
			\end{cases}\quad\\
			\fp(a_i)&=\begin{cases}
				\frozen&\mbox{ if $\fp_{1}(a_i,l)=\frozen$ for all $l\in[d_{\vhA}(a_i)]$,}\\
				\slush&\mbox{ if $\fp_{1}(a_i,l)=\frozen$ for all but precisely one $l\in[d_{\vhA}(a_i)]$,}\\
				\unfrozen&\mbox{ otherwise,}
			\end{cases}\qquad
			\label{eqCOV5b}\\
				\vDelta(z,\ell)&=\sum_{i=1}^n
					\vecone\cbc{\fp\bc{v_j}=z}\prod_{x,y\in\{\unfrozen,\frozen\}}\vecone\cbc{\abs{\cbc{l\in[d_{\vhA}(v_j)]:\fp_1(v_j,l)=x, \; \fp_2(v_j,l)=y}}=\ell_{x y}},			\label{eqCOV5c}\\
				\vGamma( z,\ell)&=
				\sum_{i=1}^m\vecone\cbc{\fp(a_i)=z}\prod_{x,y \in\{\unfrozen,\frozen\}}\vecone\cbc{\abs{\cbc{l\in[d_{\vhA}(a_i)]:\fp_1(a_i,l)=x,\;\fp_2(a_i,l)=y}}=\ell_{x y}}.			\label{eqCOV5d}
			\end{align}
			Then 
			\begin{align}\label{eqCOV5punch}
				\vDelta(z,\ell)&=n\bDelta_{z,\ell}(\alpha)+o(n),&\vGamma(z,\ell)&=m\bGamma_{z,\ell}(\alpha)+o(n).
			\end{align}
	\end{description}
Condition {\bf COV1} provides consistency of the labels associated with the paired clones.
Moreover, {\bf COV2}--{\bf COV3} impose the fixed point condition \eqref{eqWPfixedEx} on $(\pi,\fp)$.
Similarly, the labels \eqref{eqCOV5a}--\eqref{eqCOV5b} mimic the definitions \eqref{eqWPmarks1}--\eqref{eqWPmarks2}.
Finally, \eqref{eqCOV5c}--\eqref{eqCOV5punch} ensure that the statistics of the labels/messages are in line with the correct `quenched' values~\eqref{eqDeltafell}--\eqref{eqGammafell} (see \Prop~\ref{prop_wp}).
The following lemma determines the size of the set $\fC(\alpha)$ of all $\alpha$-covers.

\begin{lemma}\label{lemma_cover}
	\Whp\ we have $\fC(\alpha)=\exp(o(n))(km)!k!^m\prod_{i=1}^nd_{\vhA}(v_i)!\enspace$.
\end{lemma}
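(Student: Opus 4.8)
The plan is to compute $|\fC(\alpha)|$ by a typed enumeration in the pairing model, working throughout conditionally on $\fD$. The starting point is that a cover $(\pi,\fp)$ decouples: conditions \textbf{COV2}--\textbf{COV4} constrain $\fp$ alone, while \textbf{COV1} is the only condition linking $\fp$ to $\pi$. Hence $|\fC(\alpha)|=\sum_{\fp}\vecone\{\text{$\fp$ obeys \textbf{COV2}--\textbf{COV4}}\}\cdot\bigl|\{\pi:\text{$(\pi,\fp)$ obeys \textbf{COV1}}\}\bigr|$, and since \textbf{COV1} forces a variable clone carrying a label $(x,y)\in\{\unfrozen,\frozen\}^2$ to be matched with a check clone carrying the reversed label $(y,x)$, the inner count equals $\prod_{x,y}n^V_{xy}(\fp)!$ whenever the balance conditions $n^V_{xy}(\fp)=n^F_{yx}(\fp)$ hold (with $n^V_{xy},n^F_{xy}$ counting variable/check clones carrying a given label), and it is $0$ otherwise.

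Next I would parametrise the admissible $\fp$. For each of the $n-o(n)$ ``clean'' variables $v_j$ whose clones all respect the rule of \textbf{COV2}, the labels on its $d_{\vhA}(v_j)$ clones are determined by the set $S_j$ of clones with incoming message $\frozen$, through the trichotomy $|S_j|\in\{0,1,\ge 2\}$; this also fixes the node label in $\{\unfrozen,\slush,\frozen\}$ and the vector $\ell\in\cD(\unfrozen)\cup\cD(\slush)\cup\cD(\frozen)$, with $\binom{d_{\vhA}(v_j)}{|S_j|}$ admissible choices. Symmetrically a clean check $a_i$ is described by the size $t_i$ of the set of its $k$ clones with incoming $\frozen$ (the cases $t_i\le k-2$, $t_i=k-1$, $t_i=k$ corresponding to $\cG(\unfrozen),\cG(\slush),\cG(\frozen)$), contributing $\binom{k}{t_i}$. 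Condition \textbf{COV4}, i.e.\ \eqref{eqCOV5punch}, then pins down---up to $o(n)$---the number $D_{\ell,r}$ of clean degree-$\ell$ variables with $|S_j|=r$ and the number $T_t$ of clean checks with $t_i=t$ to the values read off \eqref{eqDeltafell}--\eqref{eqGammafell}; meanwhile the $o(n)$ ``dirty'' clones together with the $\vt=O(\log n)$ pinning clones contribute only a multiplicative $\exp(o(n))$, as does the multinomial choice of which nodes realise which type. Collecting terms, $|\fC(\alpha)|$ equals $\exp(o(n))$ times the product of the multinomials $\prod_\ell n_\ell!/\prod_r D_{\ell,r}!$ (with $n_\ell$ the number of degree-$\ell$ variables) and $m!/\prod_t T_t!$, the clone-label binomials $\prod_{\ell,r}\binom{\ell}{r}^{D_{\ell,r}}$ and $\prod_t\binom{k}{t}^{T_t}$, and the matching factor $\prod_{x,y}n^V_{xy}!$. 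Here the balance conditions $n^V_{xy}=n^F_{yx}$ required by \textbf{COV1} hold precisely because \eqref{eqDeltafell}--\eqref{eqGammafell} were built from a single parameter $\alpha$ satisfying $\alpha=\phi_{d,k}(\alpha)=1-\exp(-d\alpha^{k-1})$; indeed already matching the slush counts $n\bar\delta(\alpha,\slush)$ and $m\bar\gamma(\alpha,\slush)$ forces this fixed-point equation, so for $\alpha$ that is not a fixed point of $\phi_{d,k}$ one has $\fC(\alpha)=\emptyset$ and the claim is vacuous.

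Finally I would evaluate this product by Stirling's formula. Invoking Fact~\ref{fact_degs} to replace $n_\ell$ by $(1+o(1))\,n\,\pr[\Po(d)=\ell]$ and $m$ by $(1+o(1))dn/k$, each factorial becomes an entropy term, and the fixed-point identity $\alpha=1-\exp(-d\alpha^{k-1})$ is exactly what makes all $\alpha$-dependent contributions cancel, so that the product collapses to the $\alpha$-independent value $\exp(o(n))(km)!\,k!^m\prod_{i=1}^n d_{\vhA}(v_i)!$ asserted in the lemma. I expect this concluding computation---keeping track of the four label classes, the conditional Poisson and Binomial weights in \eqref{eqDeltafell}--\eqref{eqGammafell}, and verifying the cancellation---to be the main obstacle; the remaining ingredients (that the pairing model conditioned on simplicity reproduces $G(\vhA)$, that the number of dirty clones is genuinely $o(n)$ so that their relabelings and rematchings cost only $\exp(o(n))$, and that the $O(\log n)$ pinning clones are negligible) are routine.
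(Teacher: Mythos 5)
Your approach matches the paper's: you express $|\fC(\alpha)|$ as a typed enumeration in the pairing model (count the admissible label maps $\fp$, multiply by the number of pairings respecting \textbf{COV1}, i.e.\ $\prod_{x,y}n^V_{xy}!$), and then evaluate by Stirling. The paper carries this out via Claims~\ref{claim_cover1} and~\ref{claim_cover2}, writing the count as a product of multinomial coefficients and converting each factorial into an entropy term.

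Your observation that the matching constraints $n^V_{xy}=n^F_{yx}$ force $\alpha$ to satisfy $\alpha=\phi_{d,k}(\alpha)$, so that $\fC(\alpha)=\emptyset$ for any other $\alpha$, is correct and in fact sharper than the paper, which states Claim~\ref{claim_cover1} as an $\exp(o(n))$-equality even though the multinomial expression on the right stays positive when the left side vanishes. Read the lemma as an upper bound valid for every $\alpha\in[0,1]$ (vacuously so when $\fC(\alpha)=\emptyset$); this is all the application in \Prop~\ref{prop_annealedwp} needs.

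However, your final claim that the fixed-point identity $\alpha=1-\exp(-d\alpha^{k-1})$ ``is exactly what makes all $\alpha$-dependent contributions cancel'' is a misconception and would lead you astray in the computation you defer. The cancellation in the Stirling evaluation is purely algebraic and holds identically for every $\alpha\in[0,1]$: with the paper's decomposition one finds $\fl_1+\fh_4=-d(1-\log d)$, $\fl_2+\fh_1=d(1-H(\Be(\alpha^{k-1}))-\log d)+o(1)$, $\fl_3+\fh_2=dH(\Be(\alpha))+o(1)$, and $\fh_3=d\bigl(H(\Be(\alpha^{k-1}))-H(\Be(\alpha))\bigr)$, and these sum to $o(1)$ without ever invoking $\alpha=\phi_{d,k}(\alpha)$. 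The fixed-point relation only enters to make the set $\fC(\alpha)$ non-empty; the formal product evaluates to the same $\alpha$-independent answer regardless. Finally, the Stirling computation you explicitly defer is the real content of the proof (most of the paper's argument is devoted to it via \eqref{eq_claim_cover2_1}--\eqref{eq_claim_cover2_5} and \eqref{eqlemma_cover1}--\eqref{eqlemma_cover5}), so it should not be regarded as routine bookkeeping.
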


To prove \Lem~\ref{lemma_cover} we begin with the following straightforward counting formula.

\begin{claim}\label{claim_cover1}
	With $y,y'$ ranging over $\{\unfrozen,\frozen\}$, $z$ ranging over $\{\unfrozen,\slush,\frozen\}$ and $\ell$ ranging over $\cL$ we have \whp
	\begin{align}\nonumber
		\frac{|\fC(\alpha)|}{(km)!}&=\exp(-nH(\Po(d))+o(n))\binom{n}{n(\bar\Delta_{z,\ell}(\alpha))_{z,\ell}} \binom{m}{m(\bar\Gamma_{z,\ell}(\alpha))_{z,\ell}}\\
								   &\qquad\qquad\qquad\qquad\cdot\binom{km}{\bc{n\sum_{z,\ell}\ell_{yy'}\bar\Delta_{z,\ell}(\alpha)}_{y,y'}}^{-1}\prod_{z,\ell}\binom{\ell_{\unfrozen \unfrozen}+\ell_{\unfrozen \frozen }+\ell_{\frozen \unfrozen }+\ell_{\frozen \frozen }}{\ell_{\unfrozen \unfrozen },\ell_{\unfrozen \frozen },\ell_{\frozen \unfrozen },\ell_{\frozen \frozen }}^{n\bar\Delta_{z,\ell}(\alpha)+m\bar\Gamma_{z,\ell}(\alpha)}.
\label{eqcover1}
	\end{align}
\end{claim}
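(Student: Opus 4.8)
The plan is to evaluate $|\fC(\alpha)|$ by a configuration-model bookkeeping that, vertex by vertex, separates the choice of a type from the choice of a compatible pairing. Write $|\ell|:=\ell_{\unfrozen\unfrozen}+\ell_{\unfrozen\frozen}+\ell_{\frozen\unfrozen}+\ell_{\frozen\frozen}$. I would decompose the data $(\pi,\fp)$ of an $\alpha$-cover into four layers: \emph{(i)} a type $(z,\ell)$ for each variable node $v_j$, where {\bf COV2} forces the per-node label configuration to be internally consistent, hence $\ell\in\cD(z)$ with $|\ell|=d_{\vhA}(v_j)$, and {\bf COV4} forces the number of variables of type $(z,\ell)$ to be $n\bar\Delta_{z,\ell}(\alpha)+o(n)$; \emph{(ii)} likewise a type $(z,\ell)$ for each check $a_i$, with $\ell\in\cG(z)$ (hence $|\ell|=k$) and class sizes $m\bar\Gamma_{z,\ell}(\alpha)+o(n)$; \emph{(iii)} for each node, a choice of which of its clones carry which of the four label pairs, in accordance with its type; and \emph{(iv)} a pairing $\pi$ which, as prescribed by {\bf COV1}, matches each variable half-edge of edge-type $(x,y)\in\{\unfrozen,\frozen\}^2$ with a check half-edge of edge-type $(y,x)$. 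Up to the $o(n)$ slack in {\bf COV2}--{\bf COV4}, this is a bijection with $\fC(\alpha)$, so $|\fC(\alpha)|$ factors as a product of four counts.

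Layer \emph{(iii)} contributes $\prod_{z,\ell}\binom{|\ell|}{\ell_{\unfrozen\unfrozen},\ell_{\unfrozen\frozen},\ell_{\frozen\unfrozen},\ell_{\frozen\frozen}}^{n\bar\Delta_{z,\ell}(\alpha)+m\bar\Gamma_{z,\ell}(\alpha)}$ (with $|\ell|=k$ on the check side). For layer \emph{(iv)}, set $N_{xy}=n\sum_{z,\ell}\ell_{xy}\bar\Delta_{z,\ell}(\alpha)$ and $M_{xy}=m\sum_{z,\ell}\ell_{xy}\bar\Gamma_{z,\ell}(\alpha)$; a short computation from \eqref{eqdeltagamma1}--\eqref{eqGammafell}, using the means of conditional Poisson and Binomial variables, shows the half-edge balance $N_{xy}=M_{yx}$, which is equivalent to the fixed-point equation $\alpha=\phi_{d,k}(\alpha)$. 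When $\alpha$ is bounded away from the fixed points of $\phi_{d,k}$ the balance fails by $\Omega(n)$, so {\bf COV1} becomes unsatisfiable and $\fC(\alpha)=\emptyset$; the only $\alpha$ that matter downstream are those within $o(1)$ of a fixed point (equivalently, a stationary point of $\Phi_{d,k}$, by \eqref{eqPhi'}), for which the balance holds, so I focus on that case. There the number of {\bf COV1}-compatible pairings is $\prod_{x,y}N_{xy}!=(km)!\binom{km}{(N_{xy})_{x,y}}^{-1}$; here the total number of edges is $km+\vt=km+O(\log n)$, and this discrepancy, together with the $\vt$ degree-one pinning clones, contributes only a factor $\exp(O((\log n)^2))=\exp(o(n))$. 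Layer \emph{(ii)} contributes the plain multinomial $\binom{m}{m(\bar\Gamma_{z,\ell}(\alpha))_{z,\ell}}$, as all checks have degree $k$. Layer \emph{(i)} must respect the degree sequence, so its count is $\prod_{r\ge0}\binom{n_r}{(n\bar\Delta_{z,\ell}(\alpha))_{z,\ell:|\ell|=r}}$ with $n_r$ the number of degree-$r$ variable nodes; taking logarithms, inserting Stirling together with $n_r=n\pr[\Po(d)=r]+o(n)$ from Fact~\ref{fact_degs} and the Poisson-splitting identity $\sum_{z,\ell:|\ell|=r}\bar\Delta_{z,\ell}(\alpha)=\pr[\Po(d)=r]$ (valid for every $\alpha$), the $n\log n$ terms cancel and one is left with $\exp(nH((\bar\Delta_{z,\ell}(\alpha))_{z,\ell})-nH(\Po(d))+o(n))=\exp(-nH(\Po(d))+o(n))\binom{n}{n(\bar\Delta_{z,\ell}(\alpha))_{z,\ell}}$. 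Multiplying the four contributions and dividing by $(km)!$ yields \eqref{eqcover1}.

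The decomposition is the conceptual content and is essentially forced once {\bf COV1}--{\bf COV4} are read correctly; the remainder is bookkeeping, and the step I expect to cost the most care is keeping the $o(n)$ slack harmless. Concretely, one must check that choosing which $o(n)$ clones (or nodes) are ``exceptional'' and what labels they bear, and replacing the non-integer quantities $n\bar\Delta_{z,\ell}(\alpha)$, $m\bar\Gamma_{z,\ell}(\alpha)$ by integers whose half-edge counts balance exactly, together cost only a factor $\exp(o(n))$; for this I would fix the implicit $o(n)$ to be a slowly growing sequence $\eps_n n$, use $\binom{km}{\eps_n n}=\exp(o(n))$, and invoke the exponentially weighted tail bound of Fact~\ref{fact_degs} to discard the few exceptional high-degree vertices. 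A secondary, routine point is that every estimate is conditional on $\fD$, so ``\whp'' here means ``for every degree sequence typical in the sense of Fact~\ref{fact_degs} and every $\vt\le\lceil\log n\rceil$''.
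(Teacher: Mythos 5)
Your decomposition into (i) variable types respecting the degree sequence, (ii) check types, (iii) per-node clone labellings, (iv) \textbf{COV1}-compatible pairings is exactly the bookkeeping the paper performs; the only cosmetic difference is that you use per-degree-class multinomials where the paper uses the unrestricted multinomial $\binom{n}{n(\bar\Delta_{z,\ell}(\alpha))_{z,\ell}}$ and then divides out the degree-multinomial $\binom{n}{(|\{i:d_{\vhA}(v_i)=h\}|)_h}=\exp(nH(\Po(d))+o(n))$, and these two organizations are algebraically the same via the Poisson-splitting identity you verify. Your explicit remark that $(km)!\binom{km}{(N_{yy'})_{y,y'}}^{-1}=\prod_{y,y'}N_{yy'}!$ counts \textbf{COV1}-compatible pairings only when the half-edge balance $N_{yy'}=M_{y'y}$ holds, and that this balance is equivalent to $\alpha=\phi_{d,k}(\alpha)$, is a correct and useful observation that the paper leaves implicit: for $\alpha$ bounded away from a fixed point one in fact has $\fC(\alpha)=\emptyset$, so \eqref{eqcover1} should be read as an upper bound in general (which is all the downstream argument in the proof of \Prop~\ref{prop_annealedwp} actually needs).
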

\begin{proof}
	The first two multinomial coefficients account for the number of ways of assigning labels with the frequencies prescribed by \eqref{eqCOV5punch} to the variables/checks.
	However, the first multinomial coefficient implicitly counts the assignment of the variable node degrees, on which we condition; this is because $\ell_{\unfrozen\unfrozen}+\ell_{\frozen\unfrozen}+\ell_{\unfrozen\frozen}+\ell_{\frozen\frozen}$ equals the degree of the corresponding variable.
	To correct for this overcounting, we divide by the multinomial coefficient
	\begin{align}\label{eqmultinomial}
		\binom{n}{(|\{i\in[n]:d_{\vhA}(v_i)=h\}|)_{h\geq0}}.
	\end{align}
	But since the variable node degrees are asymptotically Poisson by Fact~\ref{fact_degs}, \eqref{eqmultinomial} equals $\exp(nH(\Po(d))+o(n))$ \whp\
	The first multinomial coefficient on the second line of \eqref{eqcover1} counts the number of possible matchings of the clones that respect {\bf COV1}.
	The last factor accounts for the number of ways of assigning labels to the clones of the individual variable/check nodes.
	Finally, the $\exp(o(n))$ error term swallows the approximations in \eqref{eqCOV5c}--\eqref{eqCOV5d}.
\end{proof}

\begin{claim}\label{claim_cover2}
	Letting 
	\begin{align*}
		\fl_1&=\Erw[\log(\Po(d)!)],\qquad\fl_2=-\sum_{z,\ell}\bar\Delta_{z,\ell}(\alpha)\log(\ell_{\unfrozen\unfrozen}!\ell_{\unfrozen\frozen}!\ell_{\frozen\unfrozen}!\ell_{\frozen\frozen}!),\qquad\fl_3=-\frac dk\sum_{z,\ell}\bar\Gamma_{z,\ell}(\alpha)\log(\ell_{\unfrozen\unfrozen}!\ell_{\unfrozen\frozen}!\ell_{\frozen\unfrozen}!\ell_{\frozen\frozen}!),\\
		\fh_1&=H(\bar\delta(\alpha,z))_z+H(\Po(d(1-\alpha^{k-1})))+\bar\delta(\alpha,f)H(\Po_{\geq2}(d\alpha^{k-1})),\\
		\fh_2&=\frac dk\brk{ H(\bar\gamma(\alpha,z))_z+\bar\gamma(\alpha,\unfrozen)H(\Bin_{\geq2}(k,1-\alpha))},\quad \fh_3=d\brk{H(\Be(\alpha^{k-1}))-H(\Be(\alpha))},\quad \fh_4= -H(\Po(d))
	\end{align*}
	\whp\ we have $\displaystyle\frac1n\log\frac{|\fC(\alpha)|}{(k!)^m(km)! \prod_{i=1}^nd_{\vhA}(v_i)! }=\fl_1+\fl_2+\fl_3+\fh_1+\fh_2+\fh_3 +\fh_4+ o(1).$ 
\end{claim}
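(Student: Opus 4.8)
The plan is to start from the formula \eqref{eqcover1} provided by Claim~\ref{claim_cover1} and to extract the exponential order of each factor via Stirling's formula $\log(N!)=N\log N-N+O(\log N)$, keeping track only of terms of order $n$. Dividing \eqref{eqcover1} by $(k!)^m$ and by $\prod_{i=1}^n d_{\vhA}(v_i)!$ turns the left-hand side into the quantity in the claim, so the task is purely to reorganise the right-hand side of \eqref{eqcover1} into the sum $\fl_1+\fl_2+\fl_3+\fh_1+\fh_2+\fh_3+\fh_4$. First I would rewrite the two multinomial coefficients over the variable and check nodes as entropies: by Stirling, $\frac1n\log\binom{n}{n(\bar\Delta_{z,\ell}(\alpha))_{z,\ell}}=H((\bar\Delta_{z,\ell}(\alpha))_{z,\ell})+o(1)$ and similarly $\frac1n\log\binom{m}{m(\bar\Gamma_{z,\ell}(\alpha))_{z,\ell}}=\frac dk H((\bar\Gamma_{z,\ell}(\alpha))_{z,\ell})+o(1)$, using $m=(1+o(1))dn/k$ (which holds \whp). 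Then, crucially, the chain rule for entropy lets me split $H((\bar\Delta_{z,\ell}(\alpha))_{z,\ell})$ according to the product structure of $\bar\Delta_{z,\ell}$ in \eqref{eqDeltafell}--\eqref{eqGammafell}: writing $\bar\Delta_{z,\ell}(\alpha)=\bar\delta(\alpha,z)\cdot(\text{conditional law of }\ell\text{ given }z)$, the joint entropy decomposes as $H((\bar\delta(\alpha,z))_z)$ plus the $\bar\delta(\alpha,z)$-weighted sum of the entropies of the conditional Poisson/Binomial laws. Inspecting the cases \eqref{eqcDcG1}--\eqref{eqcDcG3}, the only nondegenerate conditional pieces are the $\Po(d(1-\alpha^{k-1}))$ coordinate (present for every $z$) and, for $z=\frozen$ only, the $\Po_{\geq2}(d\alpha^{k-1})$ coordinate; this produces exactly $\fh_1$. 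The analogous decomposition of $H((\bar\Gamma_{z,\ell}(\alpha))_{z,\ell})$ yields $H((\bar\gamma(\alpha,z))_z)+\bar\gamma(\alpha,\unfrozen)H(\Bin_{\geq2}(k,1-\alpha))$, so that $\frac dk$ times it is $\fh_2$; the $\slush$ and $\frozen$ cases contribute nothing because $\cG(\slush),\cG(\frozen)$ are singletons.

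Next I would handle the matching factor $\binom{km}{(n\sum_{z,\ell}\ell_{yy'}\bar\Delta_{z,\ell}(\alpha))_{y,y'}}^{-1}$. The four aggregated counts $n\sum_{z,\ell}\ell_{yy'}\bar\Delta_{z,\ell}(\alpha)$ are, up to $o(n)$, the expected numbers of clone-pairs carrying the joint label $(y,y')$; by the fixed-point structure enforced through {\bf COV2}--{\bf COV3} together with the closed forms \eqref{eqDeltafell}--\eqref{eqGammafell}, the total mass of $\unfrozen\unfrozen$ and $\frozen$-labelled half-edges works out so that this multinomial has exponential rate $dn\,H(\Be(\alpha^{k-1}))+o(n)$ on the variable side (each of the $dn$ half-edges is $\frozen$ with probability $\alpha^{k-1}$); I would also need the matching consistency forcing the check-side statistics to agree, which is where the $-H(\Be(\alpha))$ correction enters, and the net contribution of the inverse multinomial is $-\fh_3=-d[H(\Be(\alpha^{k-1}))-H(\Be(\alpha))]$ — wait, rather it contributes $+\fh_3$ after combining with the leftover from reconciling the two sides; the bookkeeping here is the delicate point (see below). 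The term $\fh_4=-H(\Po(d))$ is exactly the $\exp(-nH(\Po(d))+o(n))$ prefactor already isolated in \eqref{eqcover1} via Fact~\ref{fact_degs}. Finally, the product $\prod_{z,\ell}\binom{\sum_{xy}\ell_{xy}}{\ell_{\unfrozen\unfrozen},\ell_{\unfrozen\frozen},\ell_{\frozen\unfrozen},\ell_{\frozen\frozen}}^{n\bar\Delta_{z,\ell}(\alpha)+m\bar\Gamma_{z,\ell}(\alpha)}$ is where $\fl_1,\fl_2,\fl_3$ come from: writing the multinomial coefficient as $(\sum\ell)!/\prod\ell_{xy}!$ and summing the exponents, the numerator $\sum_{z,\ell}(n\bar\Delta_{z,\ell}(\alpha)+m\bar\Gamma_{z,\ell}(\alpha))\log((\sum\ell)!)$ on the variable part collapses to $n\,\Erw[\log(\Po(d)!)]=n\fl_1$ because $\sum\ell$ is just the variable degree and the degrees are asymptotically $\Po(d)$ (the check part contributes $m\log(k!)$, which is cancelled by the explicit $(k!)^{-m}$ we divided out), while the denominators $-\sum_{z,\ell}\log(\prod\ell_{xy}!)$ weighted by $n\bar\Delta_{z,\ell}(\alpha)$ and $m\bar\Gamma_{z,\ell}(\alpha)$ give $n\fl_2$ and $n\fl_3$ respectively (using $m/n\to d/k$).

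The main obstacle I anticipate is the careful accounting in the matching/normalisation factor: one must verify that the $\unfrozen/\frozen$ clone-counts on the variable side and on the check side are mutually consistent (as they must be, since each edge receives one label pair from each endpoint under {\bf COV1}), and that the single binomial-entropy discrepancy $H(\Be(\alpha^{k-1}))$ versus $H(\Be(\alpha))$ is precisely what survives. Concretely, the fraction of variable-side half-edges labelled $\frozen$ in the first coordinate should be $\alpha^{k-1}+o(1)$ (this is the check$\to$variable message being $\frozen$, which happens iff all $k-1$ sibling messages are $\frozen$), whereas the fraction of check-side half-edges with first coordinate $\frozen$ is $\alpha+o(1)$; reconciling the two requires that $dn\alpha^{k-1}=(1+o(1))(dn/k)\cdot k\alpha^{k-1}$ on one count and a compensating factor on the other, and tracking which of these is absorbed into $\fh_2$ (via $\Bin_{\geq2}(k,1-\alpha)$) versus which becomes the standalone $\fh_3$ is the step most prone to sign or indexing errors. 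Once these aggregate identities are checked — they follow mechanically from \eqref{eqdeltagamma1}--\eqref{eqGammafell} by summing the Poisson and Binomial moments — the claim follows by collecting all $O(n)$-rate contributions and absorbing everything else into $o(n)$.
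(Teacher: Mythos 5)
Your overall plan follows exactly the route the paper takes: expand the Claim~\ref{claim_cover1} formula via Stirling, turn the two big multinomials into the entropies $\fh_1,\fh_2$ by the chain rule applied to the product structure \eqref{eqDeltafell}--\eqref{eqGammafell}, identify the $\exp(-nH(\Po(d)))$ prefactor with $\fh_4$, and read off $\fl_2,\fl_3$ from the $\prod_{z,\ell}\binom{\sum\ell}{\ell}^{\nix}$ factor together with the divided-out $(k!)^m$. Those pieces are correct and coincide with \eqref{eq_claim_cover2_1}, \eqref{eq_claim_cover2_2}, \eqref{eq_claim_cover2_4}, \eqref{eq_claim_cover2_5}. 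However, the step you yourself flag as delicate — producing $\fh_3$ and retaining an extra $\fl_1$ — is where your sketch is not just ``delicate'' but concretely off, and it is the one step the paper also handles very tersely in \eqref{eq_claim_cover2_3}.

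Two specific issues. First, you assert the inverse multinomial $\binom{km}{(n\sum_{z,\ell}\ell_{yy'}\bar\Delta_{z,\ell}(\alpha))_{y,y'}}^{-1}$ has exponential rate $-dnH(\Be(\alpha^{k-1}))$ because ``each of the $dn$ half-edges is $\frozen$ with probability $\alpha^{k-1}$.'' That accounts only for the \emph{first} coordinate of the label pair $(y,y')$. Working out the four aggregate counts from \eqref{eqDeltafell}--\eqref{eqGammafell} one finds $n_{yy'}/(dn)=p^{(1)}_{y}\,p^{(2)}_{y'}$ with $p^{(1)}=\Be(\alpha^{k-1})$ (the incoming, check-to-variable marginal) and $p^{(2)}=\Be(1-e^{-d\alpha^{k-1}})$ (the outgoing, variable-to-check marginal: e.g.\ $n_{\unfrozen\frozen}/n=d(1-\alpha^{k-1})(1-e^{-d\alpha^{k-1}})$, $n_{\frozen\frozen}/n=d\alpha^{k-1}(1-e^{-d\alpha^{k-1}})$, etc.). So by Stirling the rate is $-d\bigl[H(\Be(\alpha^{k-1}))+H(\Be(1-e^{-d\alpha^{k-1}}))\bigr]$, and the second coordinate contributes genuinely. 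Moreover the factor in \eqref{eqcover1} is built from $\bar\Delta$ alone, so the ``$-H(\Be(\alpha))$ correction via check-side consistency'' you invoke cannot enter through this term as written; reconciling the variable-side product entropy with the target expression $\fh_3=d[H(\Be(\alpha^{k-1}))-H(\Be(\alpha))]$ (together with the spare $\fl_1$, see below) is precisely the computation that must be carried out.

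Second, the $\fl_1$ bookkeeping: you correctly note that the numerator $\sum_{z,\ell}\bar\Delta_{z,\ell}(\alpha)\log((\sum\ell)!)$ equals $\Erw[\log(\Po(d)!)]=\fl_1$, and that the check-side numerator $m\log k!$ cancels against $(k!)^{-m}$. But the claim's left-hand side also divides by $\prod_i d_{\vhA}(v_i)!$, and by Fact~\ref{fact_degs} that contributes $-\frac1n\sum_i\log d_{\vhA}(v_i)!=-\fl_1+o(1)$, which cancels the $\fl_1$ you just produced. After this cancellation your plan yields $\fl_2+\fl_3+\fh_1+\fh_2+\fh_4$ plus the genuine matching-factor contribution; for the stated identity to close you must show that matching-factor contribution equals $\fl_1+\fh_3+o(1)$, not $\fh_3$ alone. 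This is the heart of the claim and cannot be waved through: either carry out the Stirling expansion of $\binom{km}{\nix}^{-1}$ against the explicit counts $n_{yy'}$ above and massage it into $\fl_1+\fh_3$, or reorganise the cancellations differently so the discrepancy is absorbed. As it stands, the plan leaves this reconciliation open, and your heuristic entropy estimate for the multinomial points in a different direction than the claimed $\fh_3$.
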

\begin{proof}
	In combination with \eqref{eqdeltagamma1}--\eqref{eqGammafell}, Stirling's formula shows that
	\begin{align}\label{eq_claim_cover2_1}
	\frac1n\log\binom{n}{n(\bar\Delta_{z,\ell}(\alpha))_{z,\ell}}&=H(\bar\delta(\alpha,z))_z+H(\Po(d(1-\alpha^{k-1})))+\bar\delta(\alpha,f)H(\Po_{\geq2}(d\alpha^{k-1}))+o(1),\\
\frac1n\log\binom{m}{m(\bar\Gamma_{z,\ell}(\alpha))_{z,\ell}}&=\frac dk\bc{H(\bar\gamma(\alpha,z))_z+\bar\gamma(\alpha,\unfrozen)H(\Bin_{\geq2}(k,1-\alpha))}+o(1).\label{eq_claim_cover2_2}
	\end{align}
	Similarly,
	\begin{align}\label{eq_claim_cover2_3}
		\frac1n\log\brk{\frac{1}{(km)!}\prod_{y,y'}\bc{n\sum_{z,\ell}\ell_{y,y'}\bar\Delta_{z,\ell}(\alpha)}!}&=-d\brk{H(\Be(\alpha))-H(\Be(\alpha^{k-1}))}+o(1).
	\end{align}
	Further,
	\begin{align}\label{eq_claim_cover2_4}
		\sum_{z,\ell}\bar\Delta_{z,\ell}(\alpha)\log\frac{(\ell_{\unfrozen \unfrozen}+\ell_{\unfrozen \frozen }+\ell_{\frozen \unfrozen }+\ell_{\frozen \frozen })!}{\ell_{\unfrozen \unfrozen }!\ell_{\unfrozen \frozen }!\ell_{\frozen \unfrozen }!\ell_{\frozen \frozen }!}
			&=\fl_1+\fl_2+o(1).
	\end{align}
	Finally, since $\ell_{\unfrozen \unfrozen}+\ell_{\unfrozen \frozen }+\ell_{\frozen \unfrozen }+\ell_{\frozen \frozen }=k$ for all $\ell$ such that $\bar\Gamma_{z,\ell}(\alpha)>0$, we have
\begin{align}\label{eq_claim_cover2_5}
		- \frac{1}{n}\log({(k!)^m}) + \frac m n \sum_{z,\ell}\bar\Gamma_{z,\ell}(\alpha)\log\frac{(\ell_{\unfrozen \unfrozen}+\ell_{\unfrozen \frozen }+\ell_{\frozen \unfrozen }+\ell_{\frozen \frozen })!}{\ell_{\unfrozen \unfrozen }!\ell_{\unfrozen \frozen }!\ell_{\frozen \unfrozen }!\ell_{\frozen \frozen }!} 
			&=\fl_3+o(1).
	\end{align}
	Combining \eqref{eq_claim_cover2_1}--\eqref{eq_claim_cover2_5} with Claim~\ref{claim_cover1} completes the proof.
\end{proof}

\begin{proof}[Proof of \Lem~\ref{lemma_cover}]
	Let $\lambda=\alpha^{k-1}d$ and $\mu=d-\lambda$.
	Since by Fact~\ref{fact_degs} the empirical distribution of the degrees $(d_{\vhA}(v_i))_{i\in[n]}$ is approximately $\Po(d)$ and in light of \eqref{eqdeltagamma1}--\eqref{eqGammafell}, \whp\ we have
	\begin{align}\label{eqlemma_cover1}
		\fl_1&=\frac1n\sum_{i=1}^n\log(d_{\vhA}(v_i)!)+o(1),\quad \fl_2=-\ex\brk{\log\bc{\Po(\mu)!}}-\bar\delta(\alpha,\frozen)\ex\brk{\log(\Po_{\geq2}(\lambda)!)}+o(1),
		\\
		\fl_3&=\frac dk\bar\gamma(\alpha,\slush)\log(k)+\frac dk\bar\gamma(\alpha,\unfrozen)\ex\brk{\log\binom{k}{\Bin_{\geq2}(k,1-\alpha)}}
		+o(1).\label{eqlemma_cover3}
	\end{align}
	Furthermore, trite rearrangements reveal that
	\begin{align}\label{eqlemma_cover4}
		\fh_1&=d(1-H(\Be(\alpha^{k-1}))-\log d)+\ex\brk{\log(\Po(\mu)!)}+\bar\delta(\alpha,\frozen)\ex\brk{\log(\Po_{\geq2}(\lambda)!)},\\
		\fh_2&=dH(\Be(\alpha))-\frac dk\bar\gamma(\alpha,\slush)\log(k)-\frac dk\bar\gamma(\alpha,\unfrozen)\ex\brk{\log\binom{k}{\Bin_{\geq2}(k,1-\alpha)}},\\
		\fh_4 &= -d(1-\log d) - \ex\brk{\log(\Po(d)!)}.
				 \label{eqlemma_cover5}
	\end{align}
	The assertion follows from \eqref{eqlemma_cover1}--\eqref{eqlemma_cover5} and Claim~\ref{claim_cover2}.
\end{proof}

\subsection{Proof of \Prop~\ref{prop_annealedwp}}\label{sec_prop_annealedwp}

\Lem~\ref{lemma_cover} estimates of the number of $\alpha$-WP fixed points.
In order to prove \Prop~\ref{prop_annealedwp} we now need to count the number of `balanced' assignments of values to the unfrozen variables of a WP fixed point such that all checks are satisfied.
Thus, let $(\pi,\fp)$ be an $\alpha$-cover.
Call $\sigma\in\FF_q^n$ {\em compatible} with $(\pi,\fp)$ if 
\begin{align} \label{eqprop_annealedwp}
	\sigma_j&=0\mbox{ for all $j\in[n]$ with $\fp(v_j)\neq\unfrozen$, and}\\
	\sum_{\ell\geq0}&\sum_{s\in\FF_q\setminus\cbc0}(\ell+1)\abs{\sum_{j=1}^n \vecone\{d_{\vhA}(v_j)=\ell,\,\fp(v_j)=\unfrozen\}\bc{\vecone\{\sigma_j=s\}-q^{-1}}}=o(n).\label{eqprop_annealedwp0}
\end{align}
Thus, we ask that the values of the variables $v_j$ with $\fp(v_j)=\unfrozen$ be about uniformly distributed on $\FF_q$, even when broken down to individual variable degrees.
Further, a pairing $\pi\in\fP$ induces a matrix $A(\pi)$ by letting
	\begin{align*}
		A_{ij}(\pi)&=\fA_{ij}\cdot\vecone\{\exists l\in[d_{\vhA}(v_j),\,h\in[k]:\pi(v_j,l)=(a_i,h)\}&&(i\in[m],\,j\in[n]).
	\end{align*}
Finally, we say that $\sigma\in\FF_q^n$ {\em essentially satisfies} $(\pi,\fp)$ if $\|A(\pi)\sigma\|_0=o(n)$. 
Recall that $\vec\pi\in\fP$ denotes a random pairing.

\begin{lemma}\label{lem_extension}
	Let $\fp:\fV\cup\fF\to\{\unfrozen,\frozen\}$ and let $\sigma\in\FF_q^n$. Let $\fC$ be the event that $(\vec\pi,\fp)$ is an $\alpha$-cover that $\sigma$ is compatible with, and let $\fE$ be the event that $\sigma$ is essentially satisfying.
	Then $\pr\brk{\fE\mid\fC,\fD}\leq q^{-m\bar\gamma(\alpha,\unfrozen)+o(n)}$ \whp\ 
\end{lemma}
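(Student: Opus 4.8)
The plan is to pass to the conditional probability space given $\fD$ and $\fC$---we may assume $\sigma$ is compatible with $\fp$, since otherwise $\fC=\emptyset$---in which the only remaining randomness is the pairing $\vec\pi$, and then to argue in two stages: a deterministic reduction isolating the ``live'' check equations, followed by a moment computation in the pairing model.

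\emph{Reduction.} First I would unwind the cover conditions {\bf COV1}--{\bf COV4}. Up to $o(n)$ exceptional clones, one finds: a variable clone of $\fp$--type $(\unfrozen,\unfrozen)$ occurs only at a variable $v_j$ with $\fp(v_j)=\unfrozen$, all of whose clones are then of type $(\unfrozen,\unfrozen)$ (here {\bf COV2} enters); by {\bf COV1} these clones are matched precisely to the $r_i\ge2$ type--$(\unfrozen,\unfrozen)$ slots of the checks $a_i$ with $\fp(a_i)=\unfrozen$; and every other clone pairing touches a variable with $\fp$--label $\slush$ or $\frozen$, hence with $\sigma$--entry $0$ by compatibility. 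Consequently, for all but $o(n)$ of the $N:=m\bar\gamma(\alpha,\unfrozen)+o(n)$ checks $a_i$ with $\fp(a_i)=\unfrozen$ (the count is $N$ by {\bf COV4}), the coordinate $(A(\vec\pi)\sigma)_i$ reduces to $\sum_{l\in S_i}\fA_{i,j(l)}\sigma_{j(l)}$, where $S_i$ lists the $r_i$ type--$(\unfrozen,\unfrozen)$ slots of $a_i$ and each $v_{j(l)}$ is an $\fp=\unfrozen$ variable, while all other coordinates vanish automatically. Thus the event $\fE$ (that $\|A(\vec\pi)\sigma\|_0=o(n)$) entails that all but $o(n)$ of these $N$ equations hold.

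\emph{Moment computation.} Conditioning further on $\fp,\sigma,\fD$ and on the restriction of $\vec\pi$ to all clones other than the type--$(\unfrozen,\unfrozen)$ ones, I would observe that on $\fC$ the type--$(\unfrozen,\unfrozen)$ sub--matching is unconstrained by the cover conditions: {\bf COV1} only forces those clones to pair among themselves, {\bf COV2} and {\bf COV3} hold vacuously at $\unfrozen$ variables and $\unfrozen$ checks (every incident message being $\unfrozen$), and {\bf COV4} depends on $\fp$ alone. So, on $\fC$, it is a uniformly random perfect matching between the $M:=\sum_i r_i\vecone\{\fp(a_i)=\unfrozen\}$ clones of the $\fp=\unfrozen$ variables---each bearing the value $\sigma_j$ of its variable, so that the induced multiset of $M$ values has a profile $\rho$ with $\rho_s=M/q+o(n)$ for all $s\in\FF_q$ by \eqref{eqprop_annealedwp0}---and the equally many type--$(\unfrozen,\unfrozen)$ check clones. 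It now suffices to bound, uniformly over this conditioning, the probability that such a uniform matching satisfies all but $o(n)$ of the $N$ unfrozen checks; the bound then passes to $\Pr[\fE\mid\fC,\fD]$. Writing that probability as at most $\binom ms\max_{|W|=s}\Pr[\text{all unfrozen checks off }W\text{ satisfied}]$ with $s=o(n)$, and using $\binom ms=e^{o(n)}$, it remains to bound $\Pr[\text{all }U''\text{ satisfied}]$ by $q^{-N+o(n)}$ for every set $U''$ of $\ge N-o(n)$ unfrozen checks. Since the checks of $U''$ are pairwise clone--disjoint, I would handle this by a pairing--model first--moment computation: rephrase the uniform matching as a uniform value assignment of the fixed profile $\rho$ to the $M$ check clones---under which the $\FF_q$--equations of distinct $U''$--checks decouple up to the single global constraint that the assignment have profile $\rho$---yielding $\Pr[\text{all }U''\text{ satisfied}]=q^{-|U''|+o(n)}$, each unfrozen check contributing a factor $q^{-1}(1+o(1))$ from the balance of $\rho$ while the global profile constraint costs only $q^{o(n)}$ since $\rho$ is balanced. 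Combining the two displays yields $\Pr[\fE\mid\fC,\fD]\le q^{-m\bar\gamma(\alpha,\unfrozen)+o(n)}$.

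The main obstacle is the pairing--model first--moment computation: making rigorous the heuristic that the $N$ unfrozen checks impose $N$ near--independent $\FF_q$--linear constraints, each fulfilled with probability $q^{-1}$. This rests on the clone--disjointness of the $U''$--checks and, crucially, on the balance of $\sigma$ provided by compatibility; it also requires care with the field coefficients copied from $\fA$ (the passage to general $\FF_q$, rather than just $\FF_2$, is where this bites) and with the configuration--model correlations, which one disposes of either through the value--assignment reformulation above---the only coupling being the global profile, absorbed by a crude multinomial bound---or by a sequential exposure of the matching together with a Hoeffding--Serfling concentration bound for sampling without replacement and a truncation of the $o(n)$ clones at super--constant--degree variables (using Fact~\ref{fact_degs} and the $(\ell+1)$--weighting in \eqref{eqprop_annealedwp0}). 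The deterministic bookkeeping of the reduction, reading off the handful of live equations from the intricate cover conditions {\bf COV1}--{\bf COV4}, is the other point that needs attention.
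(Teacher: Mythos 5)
Your proposal follows essentially the same route as the paper. Both arguments (i) condition on $\fD$ and $\fC$, observe via the cover conditions {\bf COV1}--{\bf COV4} that the only ``live'' coordinates of $A(\vec\pi)\sigma$ are the $\unfrozen$ checks and that what remains random is a uniform bijection between the $(\unfrozen,\unfrozen)$ variable clones and the $(\unfrozen,\unfrozen)$ check slots $\fI$ (the paper says this in one sentence, ``What remains random given $\fC,\fD,\fI$ is which unfrozen variable clones are matched to $\fI$''; you spell it out), and (ii) compare this uniform matching to an i.i.d.~uniform assignment, absorbing the coupling through the global value-profile. Your ``value-assignment reformulation with a crude multinomial bound on the profile'' is precisely the paper's device: the auxiliary i.i.d.\ vector $\vec\chi$, the profile event $\fR$, and the Bayes estimate $\pr[\fX\mid\fR]\le\pr[\fX]/\pr[\fR]$ with $\pr[\fR]=\exp(o(n))$. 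Your union bound over the $o(n)$ exceptional checks matches the paper's $q^{o(n)}$ slack. So this is the same proof, modulo presentation.

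One point worth being careful about, which you flag but do not resolve, and which the paper's write-up also leaves implicit: for $q>2$ the $i$-th check is $\sum_{h}\fA_{i,\,j(i,h)}\vec\xi_{ih}=0$, not $\sum_h\vec\xi_{ih}=0$. The paper's $\vec\xi_{ih}=\sigma_{j(i,h)}$ and the event $\fX$ are stated without the $\fA$-coefficients, and likewise your claim that ``each unfrozen check contributes a factor $q^{-1}(1+o(1))$ from the balance of $\rho$'' cannot be literally read off from balance of $\sigma$ alone, because balance of the multiset $\{\sigma_j\}$ does not control the distribution of $\fA_{i,j}\sigma_j$ over a random unfrozen $j$ (one can cook up pairs $(\fA,\sigma)$ with $\sigma$ balanced for which $\fA_{ij}\sigma_j$ avoids several field elements, and then a check with two live slots is satisfied with probability strictly above $1/q$). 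A clean fix is to note that the coefficient visible at slot $(i,h)$ is $\fA_{i,j}$ with $j$ the matched variable, to expose the matching slot-by-slot, and to bound the conditional satisfaction probability of the \emph{last} exposed slot of each check by $1/q+o(1)$ using the degree-resolved balance \eqref{eqprop_annealedwp0} and the fact that multiplying each slot by its fixed nonzero coefficient is a bijection of $\FF_q$ that preserves balance \emph{within each preimage class of the coefficient}; equivalently, one should work with the profile of the pairs $(d_j,\sigma_j)$ and argue that, conditionally, each last slot sees a near-uniform value. Since your sketch and the paper share exactly this gap, I would not call it a defect of your proposal specifically, but it is the one place where the ``heuristic $q^{-1}$ per check'' needs more than the balance of $\rho$.
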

\begin{proof}
	Given $\fC,\fD$ let $\fI$ be the set of all pairs $(i,h)\in[m]\times[k]$ such that $\vec\pi(i,h)\in\{v_j\}\times\NN$ for some variable $v_j$ with $\fp(v_j)=\unfrozen$.
	Thus, $\fI$ contains the check clones `hit' by an unfrozen variable.
	Further, let $\cI$ contain all $i\in[m]$ such that $\{i\}\times[k]\cap\fI\neq\emptyset$.
	What remains random given $\fC,\fD,\fI$ is {\em which} unfrozen variable clones are matched to $\fI$.
	Our goal is to estimate the probability that all checks $a_i$, $i\in\cI$, end up satisfied under this random matching.
	Let $\vec\xi=(\vec\xi_{ih})_{(i,h)\in\fI}$ be the vector that comprises the values under $\sigma$ of the variables that the clones in $\fI$ get matched to.
	In symbols, $\vec\xi_{ih}=\sum_{j\in[n]}\sigma_j\vecone\{\vec\pi(a_i,h)\in\{v_j\}\times\NN\}$.

	To investigate $\vec\xi$ we introduce an auxiliary random vector $\vec\chi=(\vec\chi_{ih})_{(i,h)\in\fI}$ with independent uniformly distributed entries $\vec\chi_{ih}\in\field_q$.
	Consider the events
	\begin{align*}
		\fR&=\cbc{\forall s\in\FF_q\setminus\{0\}:\sum_{(i,h)\in\fI}\vecone\{\vec\chi_{ih}=s\}=\sum_{j=1}^n\vecone\{\sigma_j=s\}d_{\vhA}(v_j)},&
		\fX&=\cbc{\sum_{i\in\cI}\vecone\cbc{\sum_{h:(i,h)\in\fI}\vec\chi_{ih}\neq0}=o(n)}.
	\end{align*}
	Given the event $\fR$ the vectors $\vec\xi$ and $\vec\chi$ are identically distributed.
	Hence,
	\begin{align}\label{eqlem_extension0}
		\pr\brk{\fE\mid\fC,\fD}&=\pr\brk{\fX\mid\fC,\fD,\fI,\fR}.
	\end{align}

	The unconditional probabilities $\pr\brk{\fX\mid\fC,\fD,\fI}$ and $\pr\brk{\fR\mid\fC,\fD,\fI}$ are computed easily.
	Indeed, because the $\vec\chi_{ih}$ are uniform and independent, for any $i\in\cI$ the event $\sum_{h:(i,h)\in\fI}\vec\chi_{ih}=0$ occurs with probability $1/q$.
	Hence,
	\begin{align}\label{eqlem_extension1}
		\pr\brk{\fS\mid\fC,\fD,\fI}&=q^{-|\cI|+o(n)}.
	\end{align}
	Furthermore, conditions {\bf COV1--COV4} and the definitions \eqref{eqGammafell0}--\eqref{eqGammafell} of the coefficients $\Gamma_{z,\ell}(\alpha)$ ensure that \whp\ given $\fC,\fD$ we have $|\fI|=m(\bar\gamma(\alpha,\unfrozen)+o(1))$.
	Thus, \eqref{eqlem_extension1} becomes
	\begin{align}\label{eqlem_extension2}
		\pr\brk{\fS\mid\fC,\fD,\fI}&=q^{-m\bar\gamma(\alpha,\unfrozen)+o(n)}.
	\end{align}
	Moreover, \eqref{eqprop_annealedwp0} ensures that $\pr\brk{\fR\mid\fC,\fD,\fI}=\exp(o(n))$.
	Combining \eqref{eqlem_extension0} and \eqref{eqlem_extension2} with Bayes' rule, we obtain
	\begin{align*}
		\pr\brk{\fE\mid\fC,\fD}=\ex\brk{\pr\brk{\fS\mid\fC,\fD,\fI,\fR}\mid\fC,\fD}
			&\leq\ex\brk{\frac{\pr\brk{\fS\mid\fC,\fD,\fI}}{\pr\brk{\fR\mid\fC,\fD,\fI}}\mid\fC,\fD}\leq q^{-m\bar\gamma(\alpha,\unfrozen)+o(n)},
	\end{align*}
	as desired.
\end{proof}

\begin{proof}[Proof of \Prop~\ref{prop_annealedwp}]
	As a first step we relate the number of $\alpha$-WP fixed points of $\vhA$ to the number of $\alpha$-covers.
	Given $\fD,\fS$ the random matrix $A(\vec\pi)$ has the same distribution as $\vhA$.
	Hence, suppose that $\fm$ is an $\alpha$-WP fixed point of $A(\vec\pi)$.
	Then $\fm$ induces a map $\fp_{\vec\pi}:\fV\cup\fF\to\{\frozen,\unfrozen\}^2$ by letting $\fp_{\vec\pi}(a_i,h)=(\fm_{v_j\to a_i},\fm_{a_i\to v_j})$, where $j\in[n]$ is the unique index such that $\vec\pi(a_i,h)\in\{v_j\}\times\NN$.
	Similarly, $\fp_{\vec\pi}(v_j,h)=(\fm_{a_i\to v_j},\fm_{v_j\to a_i})$ if $\vec\pi(v_j,h)\in\{a_i\}\times[k]$.
	The definitions \eqref{eqbiDelta1}--\eqref{eqbiDelta2} and \eqref{eqalphaWP1}--\eqref{eqalphaWP2} ensure that $(\vec\pi,\fp_{\vec\pi})$ satisfies {\bf COV1--COV4}.
	Thus, $(\vec\pi,\fp_{\vec\pi})$ is an $\alpha$-cover.
	
	Before we proceed we need to deal with an overcounting issue.
	Specifically, given $\fD$ for any matrix $\vhA$ there are $\Xi=(k!)^m\prod_{j=1}^nd_{\vhA}(v_j)!$ pairings $\pi$ that render $\vhA$, i.e., that satisfy $A(\pi)=\vhA$.
	At the same time, there are a total of $(km)!$ pairings $\pi$, and $\vhA$ and $A(\vec\pi)$ are identically distributed given $\fS$.
	In effect, \Lem~\ref{lemma_cover}, which counts the total number of $\alpha$-covers, implies that the number $W_\alpha$ of $\alpha$-WP fixed points of $\vhA$ satisfies
	\begin{align}\label{eqW}
		\ex[W_\alpha\mid\fD]=\exp(o(n))&&\mbox{\whp}
	\end{align}

	Now consider an extension $\sigma$ of $\fm$.
	Then $\sigma$ is {\em nearly} compatible with $(\vec\pi,\fp_{\vec\pi})$, except that \eqref{eqprop_annealedwp} may be violated for $o(n)$ indices $j\in[n]$.
	To remedy this set $\tau_j=\vecone\{\fp(v_j)=\unfrozen\}\sigma_j$.
	Then $\tau$ is compatible with $(\vec\pi,\fp)$ and \eqref{eqextension} implies
	\begin{align}\label{eqsigmatau1}
		\sum_{j=1}^n\vecone\{\sigma_j\neq\tau_j\}=o(n).
	\end{align}
	Further, because $\sigma\in\ker\vhA$, Fact~\ref{fact_degs} and \eqref{eqsigmatau1} yield $\|A(\vec\pi)\tau\|_0=o(n)$.
	Hence, $\tau$ essentially satisfies $(\vec\pi,\fp_{\vec\pi})$.
	
	Since \eqref{eqsigmatau1} shows that the number of inverse images $(\fm,\sigma)$ that can give rise to a specific pair $(\fp_{\vec\pi},\tau)$ is bounded by $\exp(o(n))$, in order to bound $\vX_\alpha$ it suffices to bound the expected number of pairs $(\fp_{\vec\pi},\tau)$ given $\fD$.
	The estimate \eqref{eqW} shows that the expected number of $\alpha$-covers $\fp_{\vec\pi}$ induced by $\alpha$-WP fixed points is bounded by $\exp(o(n))$.
	Furthermore, given $\fp_{\vec\pi}$ the number of assignments $\tau$ that satisfy the condition \eqref{eqprop_annealedwp0} is bounded by $q^{\bar\delta(\alpha,\unfrozen)n+o(n)}$.
	Moreover, \Lem~\ref{lem_extension} shows that such a $\tau$ is essentially satisfying with probability $q^{\bar\gamma(\alpha,\unfrozen)m+o(n)}$.
	Combining these estimates and recalling the definitions \eqref{eqPhi}, \eqref{eqdeltagamma1} and \eqref{eqdeltagamma2} of $\Phi$, $\bar\delta(\alpha,\unfrozen)$ and $\bar\gamma(\alpha,\unfrozen)$, we obtain
	\begin{align*}
		\ex\brk{\vX_\alpha\mid\fD}&\leq q^{\bar\delta(\alpha,\unfrozen)n+\bar\gamma(\alpha,\unfrozen)m+o(n)}=q^{\Phi_{d,k}(\alpha)n+o(n)}\qquad\mbox{\whp},
	\end{align*}
	thereby completing the proof.
\end{proof}

\subsection{Proof of \Cor~\ref{cor_annealedwp}}\label{sec_cor_annealedwp}
Fact~\ref{fact_phi} shows that for $d<d_k$ the function $\Phi_{d,k}(\alpha)$ attains its unique global maximum at $\alpha=0$.
Moreover, a glimpse at \eqref{eqPhi} reveals that $\Phi_{d,k}(0)=1-d/k$.
Hence, for any $d<d_k$ there exists $\zeta>0$ such that for any fixed $\xi>0$ we have $n\max_{\alpha\in[\xi,1]}\Phi(\alpha)<n-m-3\zeta n$.
Hence, \Prop s~\ref{prop_vhA} and~\ref{prop_annealedwp} show together with Markov's inequality that
\begin{align}\label{eqcor_annealedwp1}
	\pr\brk{\nul\vhA\geq n-m-2\zeta n\mid\vha\in[\xi,1]}&=\pr\brk{\max_{\alpha\in[\xi,1]}\vX_\alpha\geq q^{n-m-\zeta n}}+o(1)=o(1).
\end{align}
But since $\vhA$ has $m+o(n)$ rows, we have $\nul\vhA\geq n-m+o(n)$.
Therefore, \eqref{eqcor_annealedwp1} shows that $\vec\alpha<\xi$ \whp\
Letting $\xi\to0$ sufficiently slowly as $n\to\infty$, we thus conclude that $\vec\alpha=o(1)$ \whp\
Therefore, the assertion follows from \Prop~\ref{prop_vhA}.

\subsection{Proof of \Lem~\ref{lem_annealed_trivial}}\label{sec_lem_annealed_trivial}
Recall that for $\sigma \in \FF_q^n$ we let $\rho(\sigma) = (\rho_s(\sigma))_{s \in \FF_q}$ with $\rho_s(\sigma) = \frac 1 n \sum_{j = 1 }^{n} \vecone \{ \sigma_j = s \}$.
Let $\cR=\{\rho(\sigma):\sigma\in\FF_q^n\}$ be the set of all conceivable $\rho(\sigma)$-vectors.
Further, for $\chi = ( \chi_1, \dots , \chi_n ) \in \FF_q^n$ let $\chi^\perp=\{\sigma\in\FF_q^n:\sum_{j=1}^n\sigma_j\chi_j=0\}$. 
The following claim yields the approximate probability that a random vector whose entries are drawn independently from a distribution $r\in\cR$ close to the uniform distribution $q^{-1}\vecone$ belongs to $\chi^\perp$.

\begin{claim}\label{claim_lm}
	Let $\chi\in\FF_q^n$ be a vector with $|\supp\chi|=k\geq3$.
	Then uniformly for $r\in\cR$ with $\|r-q^{-1}\vecone\|<\eps$ we have
	\begin{align*}
		\varphi_\chi(r)=\sum_{\sigma\in\chi^\perp}\prod_{s\in\FF_q}r_s^{n\rho_s(\sigma)}=\frac1q+O(\eps^3)\qquad\mbox{as }\eps\to0.
	\end{align*}
\end{claim}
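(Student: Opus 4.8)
The plan is to reduce $\varphi_\chi(r)$ to a sum over only the $k$ coordinates in $\supp\chi$ and then expand it by means of the additive characters of $\FF_q$. First I would observe that $\prod_{s\in\FF_q}r_s^{n\rho_s(\sigma)}=\prod_{j=1}^nr_{\sigma_j}$, so that $\varphi_\chi(r)=\sum_{\sigma\in\chi^\perp}\prod_{j=1}^nr_{\sigma_j}$. Writing $S=\supp\chi$, the linear constraint $\sum_{j=1}^n\chi_j\sigma_j=0$ defining $\chi^\perp$ involves only the $k$ coordinates $j\in S$; hence the coordinates $j\notin S$ factor out, and since $r$ is a probability vector each of them contributes a factor $\sum_{s\in\FF_q}r_s=1$. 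Therefore $\varphi_\chi(r)=\sum_{(\sigma_j)_{j\in S}}\vecone\{\sum_{j\in S}\chi_j\sigma_j=0\}\prod_{j\in S}r_{\sigma_j}$.

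Next I would fix a non-trivial additive character $\psi$ of $(\FF_q,+)$ and replace the indicator using the orthogonality relation $\vecone\{x=0\}=q^{-1}\sum_{t\in\FF_q}\psi(tx)$, then interchange the sums and expand the product over $j\in S$. This yields $\varphi_\chi(r)=q^{-1}\sum_{t\in\FF_q}\prod_{j\in S}\hat r(t\chi_j)$, where $\hat r(u)=\sum_{s\in\FF_q}r_s\psi(us)$. The term $t=0$ contributes exactly $q^{-1}\hat r(0)^k=q^{-1}$ because $\hat r(0)=\sum_{s\in\FF_q}r_s=1$; this is the claimed main term.

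It remains to control the $q-1$ terms with $t\neq0$. Writing $r=q^{-1}\vecone+e$, so that $\sum_{s\in\FF_q}e_s=0$ and $\|e\|<\eps$, the Fourier transform of $q^{-1}\vecone$ vanishes off $0$, whence $\hat r(u)=\hat e(u)$ for every $u\neq0$, and $|\hat e(u)|\le\sum_{s\in\FF_q}|e_s|=O(\eps)$. Since $\chi_j\neq0$ for $j\in S$ and $t\neq0$, every argument $t\chi_j$ is non-zero, so $|\prod_{j\in S}\hat r(t\chi_j)|=|\prod_{j\in S}\hat e(t\chi_j)|=O(\eps^k)=O(\eps^3)$ as $\eps\to0$, using $k\geq3$. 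Summing the $q-1$ nonzero terms gives a total error $O(\eps^3)$, and thus $\varphi_\chi(r)=q^{-1}+O(\eps^3)$. The computation is entirely routine; the only point worth flagging is that the hypothesis $k\geq3$ is precisely what turns the generic $O(\eps)$ bound on a single character sum into the cubic error, and that the implied constants depend only on the fixed parameters $q$ and $k$, which is what makes the estimate uniform over all $r\in\cR$ with $\|r-q^{-1}\vecone\|<\eps$.
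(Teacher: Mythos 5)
Your proof is correct, and it takes a genuinely different route from the paper. The paper's argument is an honest Taylor expansion of $f_\chi(r)$ around $r=q^{-1}\vecone$: it computes the gradient and Hessian explicitly (both turn out to be constant multiples of $\vecone$ and $\vecone\vecone^\trans$, respectively), observes that the perturbation $e=r-q^{-1}\vecone$ is orthogonal to $\vecone$ because both $r$ and $q^{-1}\vecone$ are probability vectors, and concludes that the first- and second-order terms vanish while the third-order terms are $O(\eps^3)$. You instead expand the indicator $\vecone\{\sum_{j\in S}\chi_j\sigma_j=0\}$ via orthogonality of the additive characters of $\FF_q$, which converts $\varphi_\chi(r)$ into $q^{-1}\sum_{t\in\FF_q}\prod_{j\in S}\hat r(t\chi_j)$; the $t=0$ term gives the main contribution $q^{-1}$, and for $t\neq0$ each of the $k$ factors is a character sum against the centred perturbation $e$, hence $O(\eps)$, giving an error of $O(\eps^k)$. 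Your argument is shorter, avoids the derivative bookkeeping entirely, and in fact yields the sharper bound $O(\eps^k)$; the one structural feature it exposes more transparently than the Taylor computation is exactly why $k\geq3$ is needed (each non-zero support coordinate of $\chi$ contributes one factor of $\eps$). Both arguments hinge on the same two facts—that $e$ sums to zero and that $\chi$ has $k$ non-zero entries—so they are genuinely parallel, but the character-sum route is the more economical of the two.
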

\begin{proof}
	Let $\cX(\chi)=\{\sigma\in\FF_q^{\supp\chi}:\sum_{j\in\supp\chi}\sigma_j\chi_j=0\}$ and for $\sigma\in\cX(\chi)$ and $s\in\FF_q$ let $R_s(\sigma)=|\{j\in\supp\chi:\sigma_j=s\}|$.
	Then $\varphi_\chi(r)=f_\chi(r)$, where
	$$f_\chi(r)=\sum_{\sigma\in\cX(\chi)}\prod_{s\in\FF_q}r_s^{R_s(\sigma)}.$$
	We are going to expand $f_\chi(r)$ to the second order.
	Clearly, $f_\chi(q^{-1}\vecone)=q^{-1}$, because $\cX(\chi)\subseteq\FF_q^{\supp\chi}$ is a linear subspace of codimension one and thus $|\cX(\chi)|=\FF_q^{k-1}$.
	Further, the partial derivatives of $f_\chi(r)$ come out as
	\begin{align}\label{eqclaim_lm1}
		\frac{\partial f_\chi }{ \partial r_t} &= \sum_{\sigma \in \cX(\chi)}  R_t(\sigma)r_t^{ R_t(\sigma) - 1}\prod_{ s   \in \FF_q\setminus\{t\} } r_s^{ R_s(\sigma) }     &&(t\in\FF_q),\\
		\frac{\partial^2 f_\chi }{ \partial r_t \partial r_u}   &= \sum_{\sigma \in \cX(\chi)} R_t(\sigma)R_u(\sigma)r_t^{R_t(\sigma)-1}r_u^{R_u(\sigma)-1} \prod_{s   \in \FF_q\setminus \{t,u \} } r_s^{ R_s(\sigma)} &&(t,u\in\FF_q,\,t\neq u),\label{eqclaim_lm2}\\
		\frac{\partial^2 f_\chi }{ \partial r_t^2}  &= \sum_{\sigma \in\cX(\chi)} R_t(\sigma)(R_t(\sigma)-1)r_t^{R_t(\sigma)-2} \prod_{ s   \in \FF_q\setminus \{t\} } r_s^{ R_s(\sigma) }&&(t\in\FF_q). \label{eqclaim_lm3}
	\end{align}
	To evaluate \eqref{eqclaim_lm1} at $r=q^{-1}\vecone$, we observe that the affine subspace $\{\sigma\in\cX(\chi):\sigma_j=t\}$ has dimension $k-2$ for every $t\in\FF_q$ and $j\in\supp\chi$, because $k=|\supp\chi|\geq3$.
	Hence,
	\begin{align}\label{eqclaim_lm4}
	\frac{\partial f_\chi }{ \partial r_t} \bigg\rvert_{r = q^{-1}\vecone  } &= q^{1-k} \sum_{j \in\supp\chi }\sum_{\sigma \in \cX(\chi) }    \vecone \{ \sigma_j = t \} =  \frac k q.
\end{align}
Similarly, since the affine subspaces $\{\sigma\in\cX(\chi):\sigma_j=t,\sigma_{j'}=u\}$ for $t,u\in\FF_q$ and $j,j'\in\supp\chi$, $j\neq j'$, have dimension $k-3$, \eqref{eqclaim_lm2}--\eqref{eqclaim_lm3} evaluated at $r=q^{-1}\vecone$ boil down to
\begin{align}\label{eqclaim_lm5}
\frac{\partial^2 f_\chi }{ \partial r_t \partial r_u}\bigg\rvert_{r = q^{-1}\vecone  }&=\frac{\partial^2 f_\chi }{ \partial^2 r_t}\bigg\rvert_{r = q^{-1}\vecone  }=\frac{k(k-1)}q.
\end{align}
Further, all third partial derivatives remain bounded, i.e.,
\begin{align}\label{eqclaim_lm6}
	\frac{\partial^3 f_\chi }{ \partial r_s \partial r_t \partial r_u}&=O(1)&&\mbox{for all }s,t,u\in\FF_q.
\end{align}
Finally, since for every $r\in\cR$ we have $\sum_{s\in\FF_q}r_s=1$ and the only eigenspaces with non-zero eigenvalues of the Jacobi matrix $Df_\chi(q^{-1}\vecone)$ and of the Hessian $D^2_\chi(q^{-1}\vecone)$ are spanned by $\vecone$, the assertion follows from \eqref{eqclaim_lm4}--\eqref{eqclaim_lm6} and Taylor's formula.
\end{proof}

\begin{proof}[Proof of \Lem~\ref{lem_annealed_trivial}]
	Given the value of $\vt$ the random matrix $\vhA$ consists of $m$ rows of support size $k$ and $\vt$ unary rows.
	These rows are stochastically independent.
	Therefore, Claim~\ref{claim_lm} shows that for any $r\in\cR$ and any $\sigma\in\FF_q^n$ with $\rho(\sigma)=r$ we have
	\begin{align}\label{eqlem_annealed_trivial1}
		\pr\brk{\sigma\in\ker\vhA\mid\vt}&=q^{-m-\vt}\exp(O(n\|r-q^{-1}\vecone\|_1^3)).
	\end{align}
	Further, we recall that the entropy function $H(r)$ has the expansion
	\begin{align}\label{eqlem_annealed_trivial2}
		H(r)=\log q-\frac q2\sum_{s\in\FF_q}(r_s-q^{-1})^2+O(\|r-q^{-1}\vecone\|_1^3).
	\end{align}
	Combining \eqref{eqlem_annealed_trivial1}--\eqref{eqlem_annealed_trivial2} and applying the Laplace method, we thus obtain for small enough $\eps>0$,
\begin{align*}
	\ex&\abs{\ker\vhA\cap\{\sigma\in\FF_q^n:\|\rho(\sigma)-q^{-1}\vecone\|_1<\eps\}\mid\vt}\\&= (1+o(1))q^{n-m-\vt}\sum_{r\in\cR:\|r-q^{-1}\vecone\|_1<\eps}
	\frac{\exp(-q\|r-q^{-1}\vecone\|_2^2/2+O(\|r-q^{-1}\vecone\|_1^3))}{\sqrt{(2\pi n)^{q-1}\prod_{s\in\FF_q}r_s}}\sim q^{n-m-\vt},
	\end{align*}
	as claimed.
\end{proof}

\section{Proof of \Thm~\ref{thm_main} (ii)}\label{sec_finish}

\noindent
The proof of the second part of \Thm~\ref{thm_main} is based on the interpolation method from mathematical physics~\cite{PanchenkoTalagrand}.
The interpolation method has been applied previously in order to estimate the rank of random matrices from a more general model~\cite{Maurice}, and in fact the upper bound on the rank obtained in~\cite{Maurice} implies \Thm~\ref{thm_main} (ii).
Nonetheless, for the sake of completeness here we present a simplified version of the interpolation argument tailored to the specific random matrix model $\vhA$.

The basic idea is to construct a family $\vhA(\theta)$ of matrices parametrised by $\theta\in[0,1]$.
The first matrix $\vhA(0)$ (essentially) coincides with the random matrix $\vhA$, while at the other end $\vhA(1)$ we have a matrix whose nullity is easy to compute explicitly.
We will then differentiate $\ex[\nul\vhA(\theta)]$ to compare $\ex[\nul\vhA(0)]$ and $\ex[\nul\vhA(1)]$.
Thus, we obtain a lower bound on the nullity of $\vhA(0)$, and hence of $\vhA$.
Since $\nul(\vhA)+\rk(\vhA)=n$, this lower bound on the nullity translates into the desired upper bound on the rank of $\vhA$.

The interpolating family $\vhA(\theta)$ is constructed as follows.
Let $\vm_\theta,\vm_\theta'$ be two independent Poisson variables with means $(1-\theta)dn/k$ and $d\theta\alpha_\frozen^{k-1}n$, respectively; here $\alpha_\frozen=\alpha_\frozen(d,k)>0$ is the maximum fixed point of $\phi_{d,k}$ (see Fact~\ref{fact_phi}).
Both $\vm_\theta,\vm_\theta'$ are also independent of the uniform random variable $\vt\in[T]$.
The random matrix $\vhA(\theta)$ has size $(\vm_\theta+\vm_\theta'+\vt)\times n$.
As in the definition \eqref{eqA} of $\vA$, the first $\vm_\theta$ rows of $\vhA(\theta)$ have entries
\begin{align*}
	\vhA_{ij}(\theta)&=\fA_{ij}\vecone\{j\in\ve_i\}&&(i\in[\vm_\theta],j\in[n]),
\end{align*}
where $(\ve_i)_{i\geq1}$ is a family of uniformly random subsets of $[n]$ of size $k$; these sets are mutually independent as well as independent of $\vm_t,\vm_t'$ and $\vt$.
Further, for $\vm_t<i\leq1+\vm_t'+\vt$ the $i$-th row of $\vhA$ contains a single one in a uniformly random column $j\in[n]$, while all other entries are zero.
The positions of these $1$-entries are drawn independently of each other and of everything else.

\begin{lemma}\label{lem_interpol_extreme}
	We have $\ex[\nul\vhA(0)]=\ex[\nul\vA]+o(n)$ and $\ex[\nul\vhA(1)]=n\exp(-d\alpha_\frozen^{k-1})+o(n)$.
\end{lemma}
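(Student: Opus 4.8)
The plan is to treat the two endpoints $\theta=0$ and $\theta=1$ separately; at each boundary the interpolating matrix degenerates into something transparent. At $\theta=0$ the second batch of rows disappears, since $\vm_0'$ is Poisson with mean $d\cdot0\cdot\alpha_\frozen^{k-1}n=0$, so $\vm_0'=0$ almost surely. Hence $\vhA(0)$ consists of $\vm_0\sim\Po(dn/k)$ rows of precisely the same type as the rows of $\vA$, together with the at most $T=\lceil\log n\rceil$ unary pinning rows. I would couple $\vhA(0)$ with $\vA$ on a common probability space by drawing one infinite sequence of independent $\vA$-type rows and reading off the first $m$ of them for $\vA$ and the first $\vm_0$ of them for the non-unary part of $\vhA(0)$. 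Since deleting or appending a single row alters the rank, and therefore the nullity, by at most one, this coupling gives $|\nul\vhA(0)-\nul\vA|\le|\vm_0-m|+\vt$. Taking expectations, $\ex\vt\le T=\lceil\log n\rceil=o(n)$, while $\ex|\vm_0-m|\le\ex|\vm_0-\ex\vm_0|+|\ex\vm_0-m|=O(\sqrt n)$ by Cauchy--Schwarz together with $\Var\vm_0=\ex\vm_0=dn/k$ (using that $\ex\vm_0$ and $m$ agree up to $O(1)$); both terms are $o(n)$, which yields the first claim.

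At $\theta=1$ the first batch of rows disappears, since $\vm_1\sim\Po(0)=0$. Thus $\vhA(1)$ is built entirely from $\vM=\vm_1'+\vt$ unary rows, with $\vm_1'\sim\Po(d\alpha_\frozen^{k-1}n)$ independent of $\vt$. A unary row carrying its single $1$ in column $j$ merely forces $\sigma_j=0$ for every $\sigma\in\ker\vhA(1)$, and there are no other constraints, so $\nul\vhA(1)$ equals the number of columns of $[n]$ that are hit by none of the $\vM$ unary rows. By linearity of expectation, $\ex[\nul\vhA(1)]=n\,\pr\brk{\text{column }1\text{ is not hit}}=n\,\ex[(1-1/n)^{\vM}]$, which I would factor as $n\,\ex[(1-1/n)^{\vm_1'}]\,\ex[(1-1/n)^{\vt}]$ by independence. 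The probability generating function of the Poisson distribution gives $\ex[(1-1/n)^{\vm_1'}]=\exp(d\alpha_\frozen^{k-1}n\cdot(-1/n))=\exp(-d\alpha_\frozen^{k-1})$ exactly, while $1\le\vt\le T$ yields the sandwich $(1-1/n)^{T}\le\ex[(1-1/n)^{\vt}]\le1$, so $\ex[(1-1/n)^{\vt}]=1+o(1)$. Altogether $\ex[\nul\vhA(1)]=n\exp(-d\alpha_\frozen^{k-1})(1+o(1))=n\exp(-d\alpha_\frozen^{k-1})+o(n)$, as claimed.

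I do not anticipate a genuine obstacle: the statement is essentially a bookkeeping computation for the two boundary matrices. The only places that call for a line of care are the $O(\sqrt n)$ Poisson-fluctuation bound on $\vm_0$ in the $\theta=0$ case and the clean evaluation of the Poisson generating function — the source of the factor $\exp(-d\alpha_\frozen^{k-1})$ — in the $\theta=1$ case, neither of which is difficult.
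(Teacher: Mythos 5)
Your argument is correct and follows essentially the same route as the paper: at $\theta=0$ you couple the non-unary rows of $\vhA(0)$ with those of $\vA$ and exploit that each row changes the nullity by at most one, and at $\theta=1$ you observe that $\nul\vhA(1)$ is precisely the number of all-zero columns and evaluate via the Poisson generating function. You merely make explicit (via the $O(\sqrt n)$ fluctuation bound on $\vm_0$ and the exact PGF computation) two steps that the paper states more tersely.
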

\begin{proof}
	By construction the first $\vm_0\wedge m$ rows of $\vhA(0)$ and $\vA$ are identically distributed.
	Moreover, \whp\ we have $\vm_0=m+o(n)$.
	Since adding or removing a single row can alter the nullity by at most one, the first assertion follows.

	Regarding the second assertion, observe that the rows of $\vhA(1)$ are all-zero, except for a single one entry that sits in an independent and uniformly random position.
	Hence, the nullity of $\vhA(1)$ is simply the number of all-zero columns.
	Further, since $\ex[\vm_1']=d\alpha_\frozen^{k-1}n$, the expected number of non-zero entries per column equals $d\alpha_\frozen^{k-1}+o(1)$.
	Since the $\vm_\theta'$ is a Poisson variable, we expect $n\exp(-d\alpha_\frozen^{k-1}+o(1))$ all-zero columns.
\end{proof}

The main step of the interpolation method is to compute the derivative $\frac\partial{\partial \theta}\ex[\nul\vA(\theta)]$.

\begin{lemma}\label{lem_interpol_deriv}
	We have $\frac1n\frac\partial{\partial \theta}\ex[\nul\vA(\theta)]\leq -d\alpha_\frozen^{k-1}+\frac dk(k-1)\alpha_\frozen^{k}+\frac dk+o(1)$.
\end{lemma}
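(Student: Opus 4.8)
The plan is to apply the standard \emph{add-a-row} differentiation identity for a Poisson number of rows. The starting point is the elementary observation that adjoining a fresh row $\rho$ to a matrix $A$ over $\FF_q$ leaves the nullity unchanged if $\rho$ lies in the row space $(\ker A)^\perp=\{\rho':\scal{\rho'}\sigma=0\text{ for all }\sigma\in\ker A\}$ of $A$, and decreases it by one otherwise. Since $\vm_\theta\disteq\Po((1-\theta)dn/k)$ and $\vm_\theta'\disteq\Po(d\theta\alpha_\frozen^{k-1}n)$ are independent Poisson variables, $\nul\vhA(\theta)\le n$ is bounded, and the remaining randomness defining $\vhA(\theta)$ is independent of $(\vm_\theta,\vm_\theta',\vt)$, one may differentiate the sum over the Poisson weights in $\ex[\nul\vhA(\theta)]$ term by term. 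Using $\frac{\partial}{\partial\theta}(1-\theta)dn/k=-dn/k$ and $\frac{\partial}{\partial\theta}d\theta\alpha_\frozen^{k-1}n=d\alpha_\frozen^{k-1}n$, this gives
\begin{align*}
	\frac1n\frac\partial{\partial\theta}\ex[\nul\vhA(\theta)]&=\frac dk\,\pr\brk{\vr\notin(\ker\vhA(\theta))^\perp}-d\alpha_\frozen^{k-1}\,\pr\brk{\vu\notin(\ker\vhA(\theta))^\perp},
\end{align*}
where $\vr$ is a fresh row of weight $k$ (support a uniformly random $k$-set, non-zero entries copied from $\fA$) and $\vu$ is a fresh row with a single non-zero entry in a uniformly random column, both independent of $\vhA(\theta)$.

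It remains to bound the two probabilities, for which I write $\beta=\beta(\theta)=\ex\abs{\cF(\vhA(\theta))}/n\in[0,1]$. For $\vu$, whose unique non-zero coordinate lies in a uniform column $\vj\in[n]$, we have $\vu\in(\ker\vhA(\theta))^\perp$ if and only if $\vj$ is frozen in $\vhA(\theta)$; hence $\pr[\vu\notin(\ker\vhA(\theta))^\perp]=1-\beta$ exactly. For $\vr$, supported on a set $\{j_1,\dots,j_k\}$, observe that if all of $j_1,\dots,j_k$ lie in $\cF(\vhA(\theta))$ then $\scal\vr\sigma=0$ for every $\sigma\in\ker\vhA(\theta)$, so $\vr\in(\ker\vhA(\theta))^\perp$. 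Consequently $\pr[\vr\notin(\ker\vhA(\theta))^\perp]\le1-\ex[\binom{\abs{\cF(\vhA(\theta))}}{k}/\binom nk]$, and since $\binom fk/\binom nk=(f/n)^k+O(1/n)$ uniformly over integers $0\le f\le n$ while $x\mapsto x^k$ is convex, Jensen's inequality yields $\pr[\vr\notin(\ker\vhA(\theta))^\perp]\le1-\beta^k+o(1)$.

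Feeding these bounds into the derivative identity gives
\begin{align*}
	\frac1n\frac\partial{\partial\theta}\ex[\nul\vhA(\theta)]&\le\frac dk\bc{1-\beta^k}-d\alpha_\frozen^{k-1}\bc{1-\beta}+o(1)=-d\alpha_\frozen^{k-1}+\frac dk+d\bc{\alpha_\frozen^{k-1}\beta-\tfrac1k\beta^k}+o(1).
\end{align*}
Finally, for every $\beta\ge0$ we have $\alpha_\frozen^{k-1}\beta-\tfrac1k\beta^k\le\tfrac{k-1}k\alpha_\frozen^k$, since the left-hand side is a concave function of $\beta$ whose unique maximiser is $\beta=\alpha_\frozen$; substituting this yields exactly the claimed bound. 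The conceptual point---and the reason the interpolation succeeds even though we never pin down the frozen fraction of the intermediate matrix $\vhA(\theta)$---is that this last estimate holds for \emph{every} value of $\beta$, with equality precisely at $\beta=\alpha_\frozen$; this is what makes the endpoint value $\Phi_{d,k}(\alpha_\frozen)$ emerge as the lower bound on $\ex[\nul\vA]/n$ once $\theta$ is integrated over $[0,1]$ and \Lem~\ref{lem_interpol_extreme} is invoked.

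The only genuine fuss I anticipate lies in the bookkeeping behind the displayed derivative identity: justifying the interchange of $\frac{\partial}{\partial\theta}$ with the sum over the Poisson weights in $\ex[\nul\vhA(\theta)]$ (legitimate since the summands are bounded by $n$, so the series and its formal $\theta$-derivative converge uniformly), and confirming that the $O(1/n)$ slack in $\binom fk/\binom nk=(f/n)^k+O(1/n)$ is uniform in $f$, hence in $\theta$. Both are routine; the substance of the argument is the one-line convexity estimate together with the elementary calculus fact about $\alpha_\frozen^{k-1}\beta-\beta^k/k$.
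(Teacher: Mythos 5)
Your proof is correct and follows the same Poisson-derivative interpolation strategy as the paper, but the treatment of the weight-$k$ row is genuinely streamlined. The paper computes $\ex[\nul\vhA_-(\theta)-\nul\vhA(\theta)]=-\ex[1-\vha_\theta^k]+o(1)$ as an approximate \emph{equality}, which requires invoking the pinning operation (the $(o(1),k)$-freeness of $\vhA(\theta)$) to argue that with probability $1-o(1)$ the support of the new weight-$k$ row is not a proper relation, so that adding the row strictly drops the nullity whenever not all of its $k$ coordinates are frozen. You instead observe that because the coefficient $d/k>0$ in the derivative identity is positive, only an \emph{upper} bound on $\pr[\vr\notin(\ker\vhA(\theta))^\perp]$ is needed, and the trivial one-sided implication (all $k$ coordinates frozen implies $\vr$ lies in the row space) already gives it --- this bypasses the freeness/pinning argument entirely. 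A second small difference: the paper keeps the random variable $\vha_\theta$ and applies the pointwise inequality $x^k-kx\alpha_\frozen^{k-1}+(k-1)\alpha_\frozen^k\geq0$, whereas you use Jensen (legitimate again precisely because the bound is one-sided and $x\mapsto x^k$ is convex) to reduce to the scalar $\beta=\ex[\vha_\theta]$ and then invoke the same calculus fact, phrased as the maximisation of the concave map $\beta\mapsto\alpha_\frozen^{k-1}\beta-\beta^k/k$. Both routes are valid; yours is shorter and more self-contained within this lemma, while the paper's establishes a stronger two-sided identity that is not actually needed for the stated inequality.
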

\begin{proof}
	Since $\vm_\theta,\vm_\theta'$ are Poisson variables, we calculate
	\begin{align*}
		\frac1n\frac\partial{\partial \theta}\pr\brk{\vm_\theta=m}&=\frac{d}k\brk{\pr\brk{\vm_\theta=m}-\pr\brk{\vm_\theta=m-1}},&
		\frac1n\frac\partial{\partial \theta}\pr\brk{\vm_\theta'=m}&=d\alpha_\frozen^{k-1}\brk{\pr\brk{\vm_\theta'=m-1}-\pr\brk{\vm_\theta'=m}}.
	\end{align*}
	Therefore, 
	\begin{align}\nonumber
		\frac1n\frac\partial{\partial \theta}\ex[\nul\vA(\theta)]&=\frac1n\sum_{m,m'\geq0}\ex\brk{\nul\vhA(\theta)\mid\vm_\theta=m,\vm_\theta'=m'}\frac\partial{\partial \theta}\pr\brk{\vm_\theta=m}\pr\brk{\vm_\theta'=m'}\\
																 &=d\alpha_\frozen^{k-1}\sum_{m'\geq0}\brk{\ex\brk{\nul\vhA(\theta)\mid\vm_\theta'=m'+1}-\ex\brk{\nul\vhA(\theta)\mid\vm_\theta'=m'}}\pr\brk{\vm_\theta'=m'}\nonumber\\
																 &\quad-\frac dk\sum_{m\geq0}\brk{\ex\brk{\nul\vhA(\theta)\mid\vm_\theta=m+1}-\ex\brk{\nul\vhA(\theta)\mid\vm_\theta=m}}\pr\brk{\vm_\theta=m}.
																 \label{eq_lem_interpol_deriv1}
	\end{align}
	Hence, obtain $\vhA_+(\theta)$ from $\vhA(\theta)$ by adding one more row with precisely one non-zero entry in a uniformly random position, chosen independently of everything else.
	Let $\va^+$ signify this new row.
	Similarly, obtain $\vhA_-(\theta)$ from $\vhA(\theta)$ by adding the row $\va^-$ with entries 
		$$\va^-_j=\fA_{\vm_\theta+1\,j}\vecone\{j\in\ve_{m_\theta+1}\}.$$
	Then \eqref{eq_lem_interpol_deriv1} shows that
	\begin{align} \label{eq_lem_interpol_deriv2}
		\frac1{dkn}\frac\partial{\partial \theta}\ex[\nul\vA(\theta)]&=k\alpha_\frozen^{k-1}\ex\brk{\nul(\vhA_+(\theta))-\nul(\vhA(\theta))}-\ex\brk{\nul(\vhA_-(\theta))-\nul(\vhA(\theta))}.
	\end{align}

	Let $\vha_\theta=|\cF(\vhA(\theta))|/n$.
	We claim that
	\begin{align}\label{eq_lem_interpol_deriv3}
		\ex\brk{\nul(\vhA_+(\theta))-\nul(\vhA(\theta))}&=-\ex\brk{1-\vha_\theta}.
	\end{align}
	Indeed, let $\vj^+\in[n]$ be the position of the non-zero entry of $\va^+$.
	Then adding $\va^+$ to $\vhA(\theta)$ decreases the nullity iff $j^+\not\in\cF(\vhA(\theta))$.
	Since $\vj^+$ is uniformly random and independent of $\vhA(\theta)$, we obtain \eqref{eq_lem_interpol_deriv3}.

	Further, we claim
	\begin{align}\label{eq_lem_interpol_deriv4}
		\ex\brk{\nul(\vhA_-(\theta))-\nul(\vhA(\theta))}&=-\ex\brk{1-\vha_\theta^k}+o(1).
	\end{align}
	To see this, let $\cE_\theta$ be the event that $\vhA(\theta)$ is $(o(1),k)$-free.
	Since the construction of $\vhA(\theta)$ incorporates $\vt$ random unary equations as in the pinning lemma (\Lem~\ref{lem_pinning}), we have $\pr\brk{\cE_\theta}=1-o(1)$.
	Furthermore, since $\va^-$ is independent of $\vhA(\theta)$, the probability that the positions $1\leq\vj_1^-<\cdots<\vj_k^-\leq n$ of the non-zero entries of $\va^-$ form a proper relation of $\vhA(\theta)$ is $o(1)$ on the event $\cE_\theta$.
	Hence, assume that $\vj_1^-,\ldots,\vj_k^-$ do not form a proper relation.
	Then the nullity drops upon addition of row $\va^-$ unless $\vj_1^-,\ldots,\vj_k^-\in\cF(\vhA(\theta))$.
	Since $\pr\brk{\vj_1^-,\ldots,\vj_k^-\in\cF(\vhA(\theta))\mid\vhA(\theta)}=\vha_\theta^k+o(1)$, we obtain \eqref{eq_lem_interpol_deriv4}.

	Combining \eqref{eq_lem_interpol_deriv2}--\eqref{eq_lem_interpol_deriv4}, we find
\begin{align} \label{eq_lem_interpol_deriv5}
	\frac1{dkn}\frac\partial{\partial \theta}\ex[\nul\vA(\theta)]&=\ex\brk{1-\vha_\theta^k-k\alpha_\frozen^{k-1}(1-\vha_\theta)}+o(1).
	\end{align}
	To complete the proof, we notice that
	\begin{align}\label{eq_lem_interpol_deriv6}
		1-\vha_\theta^k-k\alpha_\frozen^{k-1}(1-\vha_\theta)+\bc{k\alpha_\frozen^{k-1}-(k-1)\alpha_\frozen^{k}-1}=-\vha_\theta^k+k\vha_\theta\alpha_\frozen^{k-1}-(k-1)\alpha_\frozen^{k}\leq0,
	\end{align}
	because $X^k-kXY^{k-1}+(k-1)Y^k\geq0$ for all $X,Y\in[0,1]$ and all $k\geq2$.
	The assertion follows from \eqref{eq_lem_interpol_deriv5} and \eqref{eq_lem_interpol_deriv6}.
\end{proof}

\begin{proof}[Proof of \Thm~\ref{thm_main} (ii)]
	Suppose that $d>d_k$.
	Integrating on $\theta\in[0,1]$, we learn from Fact~\ref{fact_phi} and \Lem~\ref{lem_interpol_extreme} that
	\begin{align}\label{eqthm_main1}
		\frac1n\ex[\nul\vhA]&\geq\Phi_{d,k}(\alpha_\frozen)+o(1)>1-d/k.
	\end{align}
	Furthermore, Azuma--Hoeffding shows that $\nul\vhA$ is tightly concentrated, because adding or removing a single row alters the nullity by at most one.
	Thus, since $\vhA$ is obtained from $\vA$ via the addition of $o(n)$ rows, we conclude that $n^{-1}\nul\vA\geq\Phi_{d,k}(\alpha_\frozen)+o(1)$ \whp\
	Therefore, \eqref{eqthm_main1} shows that $\rank\vA<m-\Omega(n)$ \whp
\end{proof}

\end{document}